\newtheorem{remark}{Remark}[section]
\newtheorem{example}{Example}[section] 
\newtheorem{proposition}{Proposition}[section] 
\newtheorem{definition}{Definition}[section]
\newcommand{\sgn}{\mathop{\rm sgn}}
\newcommand{\remove}[1]{}
\title{\bf 
	Modelling the random spreading of fake news through a two-dimensional time-inhomogeneous birth-death process\footnote{\bf Paper accepted for publication on {\em Mathematical Methods in the Applied Sciences}, $\copyright$ John Wiley \& Sons Ltd.}}
\author{Antonio Di Crescenzo$^{(1)}$, Paola Paraggio$^{(1)}$
	\\
	\normalsize (1)  Department of Mathematics,
	\normalsize Universit\`a degli Studi di Salerno \\
	Via Giovanni Paolo II, 132, Fisciano (SA), Italy \\ 
	\normalsize Email: \{adicrescenzo, pparaggio\}@unisa.it 
}
\date{\today}
\begin{document}
	\maketitle
	\section*{Abstract}
	We consider a two-dimensional time-inhomogeneous birth-death process to model the time-evolution of fake news in a population. The two components of the process represent, respectively, (i) the number of individuals (say spreaders) who know the rumor and intend to spread it, and (ii) the number of individuals (say inactives) who have forgotten the rumor previously received. We employ the probability generating function-based approach to obtain the moments and the covariance of the two-dimensional process. We also analyze a new adimensional index to study the correlation between the two components. Some special cases are considered in which both the expected numbers of spreaders and inactives are equal to suitable sigmoidal curves, which are often adopted in modelling growth phenomena. Finally, we provide an application based on real data related to the diffusion of fake news, in which the optimal choice of the sigmoidal curve that fit the datasets is based on the minimization of the mean square error and of the relative absolute error.\\
	\textbf{Keywords:} Two-dimensional birth-death process, probability generating function, adimensional correlation index, fake-news spreading, 
	mean square error, relative absolute error.		\\
	\textbf{MSC Classification:} 60J80, 
	60J85, 
	91B62,
	91B70.
	\section{Introduction}
	In recent years, the interest in issues related to the spread of fake news and to information security in general has increased significantly. Mainly, the researchers study the dynamics of online news diffusion aiming to warn users against all those situations that can harm the security of their own data. Along this line, governments of several states are allocating funds for cybersecurity. Internet allows everyone to spread news, both true and false; therefore, it is up to the user to recognize its truthfulness. Most of fake news is often ambiguous and insidious enough to mislead even careful readers. Moreover, it is important to remember that the dissemination of this type of information is based on psychological assumptions and leverages the fear and the emotion of public opinion. Often the authors of fake news cite real events, selecting only evidence to support their thesis and exploit advertising techniques to go viral. The consequences linked to the spread of fake news can be multiple, such as: focusing attention only on certain issues, diverting public opinion only on specific facts, denigrating entire ethnic groups or triggering repercussions on choices in various contexts (think about healthcare choices during the COVID-19 pandemic). Scientific research in this sector branches out into various areas ranging from the recognition of visual content modified by artificial intelligence and therefore false, to mathematical modelling. Regarding this last topic, various mathematical models have been proposed in the literature, represented both by simple curves capable of mimicking the temporal diffusion of fake news and by systems of differential equations useful to describe the temporal evolution of certain categories of the population (see, for example, the papers of San Martin \cite{SanMartin2020}, Di Crescenzo \textit{et al.}\ \cite{DiCrescenzoetal2023a} or Giorno and Spina \cite{GiornoSpina2016}). 
Furthermore, to let the models become more realistic, several investigations involve stochastic processes which are solutions of special stochastic differential equations defined by adding a random component in the deterministic models. The recent works of Mahmoud \cite{Mahmoud2020} and Esmaeeli and Sajadi \cite{EsmaeeliSajadi2023} can be viewed in this sense, since the authors adopt discrete-time stochastic processes for the temporal diffusion of fake news in a community. Many of these models are inspired by epidemiological ones. Indeed, in several stochastic systems the population is supposed to be divided into compartments according to the characteristics of the individuals that compose them (see, for example, Martins and Pinto \cite{MartinsPinto2020} for a model for the diffusion of fake news, and also Giorno and Nobile \cite{GiornoNobile2023} for an epidemiological model). 
	\par
	In this work, along the lines of the previous studies we introduce a new stochastic model for the diffusion of fake news in a community. More precisely, we consider a two-dimensional birth-death process $(X(t),Y(t))$, in which $X(t)$ represents the number of spreaders, i.e.\ individuals who, at time $t$, possess the information and intend to spread it, while $Y(t)$ represents the number of inactives, i.e.\ individuals who, at time $t$, have forgotten the information previously received. The dynamics of the process $(X(t), Y(t))$ is governed by two types of transitions. Specifically, if $(X(t),Y(t))=(n,k)$ then (i) the spreaders may increase with a time-dependent intensity,  $n\lambda(t)$, and (ii) the spreaders may decrease and become inactive with a possibly different time-dependent intensity, $n\mu(t)$, too.
	The aim of this paper is threefold: \\
	\noindent (a) to provide exact results about the probability distribution and the moments of the proposed model for the dynamics of the fake news spreading, \\
	\noindent (b) to study conditions such that the mean trend of the population categories, i.e. spreaders and inactives, correspond to typical growth curves adopted in populations dynamics, \\
	\noindent (c) to apply the proposed model to real datasets concerning the diffusion of fake news. 
\par	
		{In particular, regarding (a), we provide explicit expressions of the probability generating function, of the conditional moments and of other indexes of variation of both the components $X(t)$ and $Y(t)$.
		We also study the mixed moment, the covariance and the correlation coefficient in some special cases, namely when the spreader intensity $\lambda(t)$ and the inactive intensity $\mu(t)$ are constant and when they are proportional, i.e. $\lambda(t)=\rho\mu(t)$, $\rho>0$.}
		 Moreover, the correlation is studied using a new adimensional correlation index defined as the ratio between the mixed moment $\mathsf E\left[X(t)\cdot Y(t)\right]$ and the product of the means $\mathsf E\left[X(t)\right]$ and $\mathsf E\left[Y(t)\right]$.
		 \par 
		  With reference to (b), we consider some special instances of the two-dimensional process in which the intensities are proportional and are chosen so that the spreaders (or the inactives) have conditional mean of sigmoidal type. Specifically, we select 6 kinds of sigmoidal growth curves for the spreaders, and two  different types of sigmoidal growth curves starting from the origin, since they turn out to be more suitable to model growth phenomena subject to 
		  self-adaptive mechanisms, such as the diffusion of fake news.
		  \par
		  Finally, regarding (c), we use the proposed two-dimensional birth-death process with proportional intensities to fit real data concerning online fake news propagation. The analysis of the mean square error and of the relative absolute error 
		  allows us to assess the appropriateness of the considered sigmoidal curves leading to the suitable choice for the datasets under investigation.
	\par	
	We remark that two-dimensional birth-death processes have often been used in applied contexts since they prove to be effective in modeling certain phenomena evolving in a discrete state-space. For instance, in Di Crescenzo and Martinucci \cite{DiCrescenzoMartinucci2008}, the authors consider a two-dimensional birth-death process with constant rates, they analyze a property of quasi-symmetry and propose applications in contexts linked to double-ended queues. Moreover, G\'omez-Corral and L\'opez-Garc\'ia \cite{GomezLopez2012}, \cite{GomezLopez2015} introduce a two-dimensional continuous-time Markov chain in which each component represents a species of a population and the transition rates are proportional to the size of at least  one of the two components. They study, also, the number of births and deaths in an extinction cycle and the number of direct descendants under the hypothesis of killing and reproductive strategies. The contexts of application of two-dimensional processes are multiple and range from epidemiology (see, for example, Griffiths and Greenhalgh \cite{GriffithsGreenhalsh2011} and Billard \cite{Billard1981}), to agriculture (see, for instance, Abdullahi \textit{et al.}\ \cite{Adbullahietal2019}, in which the authors review three existing two-dimensional stochastic models showing their usefulness in applications related to seed dispersals), from service systems (as in Krishna-Kumar \textit{et al.}\ \cite{KrishnaKumar2023} and Liang Luh \cite{LiangLuh2015}), to politics (see, for example, Mobilia \cite{Mobilia2013} for a generalized three-party constrained voter model accounting for the competition between a persuasive majority and a committed minority) and also fake news diffusion (see, for instance, Kapsikar \textit{et al.}\  \cite{Kapsikaretal2021} in which the authors propose a two-dimensional branching process to describe fake news diffusion among populations composed by individuals capable to identify fake news). Generally, the results regarding such multi-dimensional processes are of numerical or approximation nature (as in Domenech-Benlloch \cite{DomenechBenllochetsl2008}, Servi \cite{Servi2002}, Brandwajn \cite{Brandwajn}, Fayolle \textit{et al.}\ \cite{Fayolleetal}, Remiche \cite{Remiche} and M\o ller and Berthelsen \cite{MollerBerthelsen}). 
	\par 
	This is the structure of the paper. An in-depth analysis of the stochastic model is presented in Section \ref{Section2} where the state-space of $(X(t), Y(t))$ is determined, highlighting the existence of absorbing states, together with the differential equations governing the dynamics of transition probabilities. 
	Sections 5 and Section 6 are devoted to the analysis of two specific cases. More precisely, in Section 5  the case of constant rates is studied, whereas in Section 6 the case of time-dependent and porportional rates is analyzed. In these two particular cases, it is possible to determine the expression of the probability generating function in closed form. 
	In Section \ref{Section3} the corresponding probability generating function is obtained (for specific cases in closed form). Moreover, Section \ref{Section4} is devoted to the moments, so that in Section \ref{Section3.1}, the moments (of the first and second order) of the first component $X(t)$ of the process are determined. Section \ref{Section5} contains similar results for the moments of the second component $Y(t)$ of the process. In Section \ref{Sec:MixedMom}, we then compute the mixed moments to analyze the correlation between $X(t)$  and  $Y(t)$. {In particular, the correlation is studied using also the new adimensional correlation index defined as the ratio between the mixed moment and the product of the conditional means.} In Section \ref{Sec:special-cases}, we study special cases in which the time-dependent rates  $\lambda(t)$ and $\mu(t)$ are  proportional, when the mean of $X(t)$ and the mean of $Y(t)$ are equal to some sigmoidal growth curves of interest in the literature, such as the Gompertz curve, both in the classical and generalized versions, the logistic curve and certain generalizations, and the modified Korf curve. The Korf and the Mitscherlich curves are also employed for the mean of $Y(t)$ since they meet the customary assumption $Y(0)=0$. Finally, in Section \ref{Section8}, we present an application based on real data, related to the diffusion of 5 different fake news in an online social network. We investigate the goodness of the proposed sigmoidal models through their mean square errors and  relative absolute errors.
	\section{The stochastic model}\label{Section2}
	Let $\left\{(X(t), Y(t));t\ge0\right\}$ be a two-dimensional birth-death process, where $X(t)$ represents the number of \textit{spreaders}, i.e. individuals who, at time $t$, possess the information and have the intention to spread it. Moreover, $Y(t)$ represents the number of \textit{inactives}, i.e.\ individuals who, at time $t$, have forgotten the information previously received and do not have the intention of sharing the rumor anymore. We suppose that $X(0)=j\in\mathbb N$, i.e.\ the number of informed individuals is initially greater than $0$, otherwise there would  be no diffusion of information. Moreover, we assume $Y(0)=0$ so that at the beginning no one has already forgotten the rumor.
	For $n,n',k,k'\in\mathbb N_0$, 
	we denote by
	$$
	q_{(n,k);(n',k')}(t):=\lim_{h\to 0}\frac{1}{h}\mathsf P\left[(X(t+h), Y(t+h))=(n',k')| (X(t),Y(t))=(n,k)\right] 
	$$
	the transition intensities of the process $(X(t),Y(t))$. Thus, we assume that the dynamics of the process are governed by the following transition rates:
	\begin{equation}\label{trans-rates}
		\begin{aligned}
			&q_{(n,k);(n+1,k)}(t)=n\lambda(t),\qquad n,k\in\mathbb N_0,\\
			&q_{(n,k);(n-1,k+1)}(t)=n\mu(t),\qquad n\in\mathbb N,\;k\in\mathbb N_0,
		\end{aligned}
	\end{equation}
	where $\lambda(t),\mu(t)$ are two positive and integrable functions on any interval $(0,\tau)$, $\tau>0$. 
	Both intensities in \eqref{trans-rates} are linear in $n$, since at each instant any spreader may transmit the information to a new individual or, on the contrary, he/she may become inactive. 
	The function $\lambda(t)$ represents the intensity that, at time $t$, an informer shares the rumor with a new individual. Whereas, the function $\mu(t)$ is the intensity that, at time $t$, an informer forgets the rumor and becomes inactive.
	These functions are both time-dependent since we assume that the characteristics of the spread and of the forgetfulness of the news may change over time  owing to external factors.
	Hence, $\lambda(t)$ represents the spreader intensity pro capite whereas $n\lambda(t)$ is the (cumulative) spreader intensity. Similarly, $\mu(t)$ represents the inactivity density pro capite and $n\mu(t)$ is the (cumulative) inactivity intensity.
	Note that the choice of intensities with a linear dependence on $n$ rather than quadratic or higher orders is owing to the necessity to define a non-explosive model, i.e.\ one with finite time-limits. Note, additionally, that the intensities \eqref{trans-rates} depend only on the current number of spreaders and not on the number of inactives. These assumptions lead to a significant simplification of calculations but are also dictated by modeling choices. The present model, in fact, aims to consider an infinite population. Specifically, if one considers a closed population of $N$ individuals divided into 3 categories (i.e., spreaders, inactives, and susceptibles, i.e.\ individuals who have never heard the news), the spreaders' rate of increase would naturally be represented by $n(N-n-k)\lambda(t)$, 
where $n$ is the number of current spreaders, and $k$ that of current inactives. Consequently, we implicitly assume that $N$ is  large, so that  the quantity $N-n-k$ is approximately equal to $N$. Moreover, it is worth to notice that, for fixed $t$, the random variable $Y(t)$ counts the number of departures from the category of spreaders up to time $t$. A departure of a spreader can be interpreted as a recovering from the fake news. For this reason, in our model the intensity with which inactives increase depends exclusively on $n$. 
	We remark that,  owing to the Markovian nature of the process,
	$$\label{pigreco}
	\pi(t):=\frac{\lambda(t)}{\lambda(t)+\mu(t)},\qquad t\ge 0,
	$$
	represents the probability that, if a new transition occurs at time $t$, it increases the number of the spreaders by one.
	\par
	According to Eqs.\ \eqref{trans-rates},  the couples of $\mathbb N_0^2$ are not all reachable by the process $(X(t),Y(t))$. Indeed, it is easy to note that the set of the reachable states from the initial state $(j,0)$ is represented by
	\begin{equation*}
		S_j:=\left\{(n,k)\in\mathbb N_0^2:n\ge 0, k\ge \max\{0,j-n\}\right\},\qquad j\in\mathbb N.
	\end{equation*}
	The state-space $S_j$ and the possible transitions are plotted in Figure \ref{fig:Figure1}.
	
	\begin{figure}[t]
		\begin{center}
			\begin{picture}(341,216) 
				\put(20,45){\vector(1,0){260}} 
				\put(80,5){\vector(0,1){240}} 
				\put(265,30){\makebox(30,25)[t]{$n$}} 
				\put(55,235){\makebox(30,15)[t]{$k$}} 
				\put(120,125){\line(1,0){60}} 
				\put(120,85){\line(1,0){100}} 
				\put(160,125){\line(1,-1){80}} 
				\put(120,125){\line(1,-1){80}} 
				\put(160,45){\line(-1,1){80}} 
				\put(200,45){\line(-1,1){120}} 
				\put(240,45){\line(-1,1){160}} 
				
				%
				%
				\put(42,30){\makebox(50,10)[t]{$(0,0)$}} 
				\put(135,30){\makebox(50,10)[t]{$(j,0)$}} 
				\put(35,120){\makebox(50,10)[t]{$(0,j)$}} 
				%
				\put(160,45){\circle{4}}
				\put(200,45){\circle{4}}
				\put(240,45){\circle{4}}
				\put(120,85){\circle{4}}
				\put(160,85){\circle{4}}
				\put(200,85){\circle{4}}
				\put(120,125){\circle{4}}
				\put(160,125){\circle{4}}
				\put(80,165){\circle*{4}}
				\put(120, 42){\line(0,1){5}}
				\put(78, 85){\line(1,0){5}}
				\put(80,205){\circle*{4}}
				\put(120,165){\circle{4}}
				
				%
				\put(80,125){\circle*{4}}
				%
				\put(160,45){\vector(-1,1){23}} 
				\put(120,85){\vector(-1,1){23}} 
				\put(200,45){\vector(-1,1){23}} 
				\put(240,45){\vector(-1,1){23}} 
				
				\put(160,85){\vector(-1,1){23}} 
				
				\put(160,125){\vector(-1,1){23}} 
				\put(120,125){\vector(-1,1){23}} 
				
				\put(200,85){\vector(-1,1){23}} 
				\put(130,85){\vector(1,0){13}} 
				\put(170,85){\vector(1,0){13}} 
				\put(200,85){\vector(1,0){23}} 
				
				\put(170,45){\vector(1,0){13}} 
				\put(210,45){\vector(1,0){13}} 
				\put(210,75){\makebox(50,10)[t]{$\dots$}} 
				\put(160,125){\vector(1,0){23}} 
				\put(130,125){\vector(1,0){13}} 
				\put(170,116){\makebox(50,10)[t]{$\dots$}} 
				
				\put(120,165){\vector(1,0){23}} 
				\put(130,156){\makebox(50,10)[t]{$\dots$}} 
				\put(120,165){\vector(-1,1){23}}
			\end{picture} 
		\end{center}
		\caption{\small Representation of the state-space $S_j$ for $j=2$. The solid dots indicate the absorbing states, whereas the vectors represent all the admissible transitions.}
		\label{fig:Figure1}
	\end{figure}
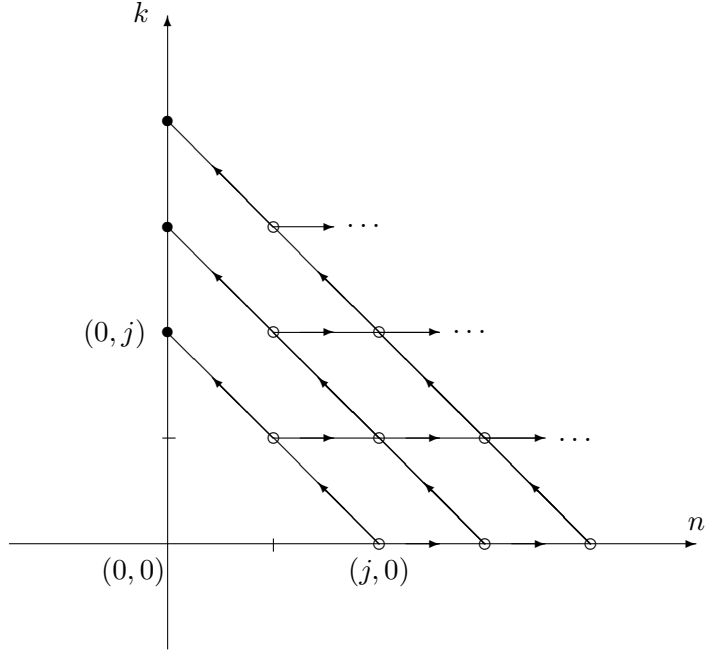
	
	%
	By setting
	\begin{equation*}
		p_{n,k}(t):=\mathsf P\left[X(t)=n, Y(t)=k\right],\qquad t\ge 0, 
	\end{equation*}
	owing to Eqs. \eqref{trans-rates}, the probabilities of the process $(X(t),Y(t))$ are governed by the following Kolmogorov equations, for $t>0$ and $j\in\mathbb N$,
	\begin{equation}\label{probeqdiff}
		\begin{aligned}
			&\frac{d}{dt}p_{n,k}(t)=p_{n-1,k}(t)(n-1)\lambda(t)+p_{n+1,k-1}(t)(n+1)\mu(t)-n\left(\lambda(t)+\mu(t)\right)p_{n,k}(t),\\
			&\qquad \qquad \qquad\qquad \qquad \qquad\qquad \qquad \quad \qquad \qquad \qquad  \text{for } n+k>j, \;k\ge 1,\; n\ge 1\\
			&\frac{d}{dt}p_{n,0}(t)=p_{n-1,0}(t)(n-1)\lambda(t)-n\left(\lambda(t)+\mu(t)\right)p_{n,0}(t),\qquad\quad \;\;\qquad  \text{for } n>j\\
			&\frac{d}{dt}p_{j,0}(t)=-j\left(\lambda(t)+\mu(t)\right)p_{j,0}(t),\\
			&\frac{d}{dt}p_{n,j-n}(t)=p_{n+1,j-n-1}(t)(n+1)\mu(t)-n\left(\lambda(t)+\mu(t)\right)p_{n,j-n}(t),\\
			&\qquad \qquad \qquad\qquad \qquad \qquad\qquad \qquad \quad \qquad \qquad \qquad\quad \text{for }1\le n<j,\; 1\le k<j\\
			&\frac{d}{dt}p_{0,k}(t)=p_{1,k-1}(t)\mu(t),\qquad \qquad \qquad \qquad\qquad \qquad \qquad \qquad \qquad\quad\text{for } k\ge j.
		\end{aligned}
	\end{equation}
	Since the initial state of the two-dimensional process is given by $(X(0),Y(0))=(j,0)$, the initial condition is 
	\begin{equation}\label{condini}
		p_{n,k}(0)=\delta_{n,j}\cdot\delta_{k,0},\qquad n\in\mathbb N,\; k\in\mathbb N_0,
	\end{equation}
	being $\delta_{ij}$ the Kronecker symbol. Note that  $X(t)$ is a linear time-inhomogeneous birth-death process, owing to Eq.\  \eqref{trans-rates}.
	\par 
	 For notational convenience, in the remainder of the paper we denote by $\mathsf{E}_j$ and $\mathsf{Var}_j$, respectively, the mean and the variance conditional on the initial state $(X(0),Y(0))=(j,0)$.

	\section{The probability generating function}\label{Section3}
	In order to obtain the probabilities $p_{n,k}(t)$ and the moments of the process $(X(t), Y(t))$, we consider the corresponding probability generating function (p.g.f.)
	\begin{equation}\label{prob-genfun}
		G_j(z_1,z_2,t):=\mathsf E_j \left[z_1^{X(t)} z_2^{Y(t)}\right]=\sum_{n=0}^{+\infty}\sum_{k=\max\{0,j-n\}}^{+\infty}p_{n,k}(t)z_1^nz_2^k, \qquad |z_1|\le 1,\;|z_2|\le1,\; t\ge 0.
	\end{equation}
	In the following proposition we provide the partial differential equation solved by the p.g.f. introduced in Eq.\ \eqref{prob-genfun}.
	\begin{proposition}
		The p.g.f. $G_j(z_1,z_2,t)$ of the two-dimensional birth-death process $(X(t),Y(t))$, with transition rates  \eqref{trans-rates}, solves the following partial differential equation
		\begin{equation}\label{PDE-G}
			\frac{\partial }{\partial t} G_j= \left[-z_1\lambda(t)\left(1-z_1\right)+\mu(t)(z_2-z_1)\right] \frac{\partial }{\partial z_1}G_j, \qquad |z_1|\le1,\; |z_2|\le1,\; t\ge 0,
		\end{equation}
		with initial condition given by $G_j(z_1,z_2,0)=z_1^j$.
	\end{proposition}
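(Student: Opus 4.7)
The plan is to differentiate the defining series of $G_j$ with respect to $t$, substitute the Kolmogorov equations \eqref{probeqdiff}, and reorganise the resulting double sum into an expression involving only $G_j$ and $\partial G_j/\partial z_1$. A preliminary bookkeeping step is to collapse the five piecewise equations in \eqref{probeqdiff} into a single identity. Adopting the natural convention $p_{n,k}(t)\equiv 0$ whenever $(n,k)\notin S_j$, every line of \eqref{probeqdiff} can be rewritten as
\begin{equation*}
\frac{d}{dt}p_{n,k}(t)=(n-1)\lambda(t)\,p_{n-1,k}(t)+(n+1)\mu(t)\,p_{n+1,k-1}(t)-n\bigl(\lambda(t)+\mu(t)\bigr)p_{n,k}(t),
\end{equation*}
valid for every $(n,k)\in\mathbb N_0^2$, because whenever a term on the right-hand side corresponds to an unreachable state, either the multiplicative factor $(n-1)$ or $(n+1)$ or the probability itself vanishes. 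Checking each of the five cases of \eqref{probeqdiff} against this unified formula is the only delicate part of the argument.

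Once this is done, the non-explosivity of the process noted after \eqref{trans-rates} legitimises differentiating \eqref{prob-genfun} term by term in $t$, giving
\begin{equation*}
\frac{\partial G_j}{\partial t}=\sum_{n,k}\frac{d p_{n,k}(t)}{dt}\,z_1^{n}z_2^{k}.
\end{equation*}
I would then handle the three resulting sums by index shifts: in the birth term the change of summation variable $m=n-1$ factors out $z_1^{2}$; in the inactivation term the joint changes $m=n+1$, $\ell=k-1$ factor out $z_2$; the third sum is already in standard form. Using the identity $\sum_{n,k} n\,p_{n,k}(t)\,z_1^{n-1} z_2^{k}=\partial G_j/\partial z_1$, the three contributions combine into
\begin{equation*}
\bigl[\lambda(t)z_1^{2}+\mu(t)z_2-(\lambda(t)+\mu(t))z_1\bigr]\frac{\partial G_j}{\partial z_1}=\bigl[-z_1\lambda(t)(1-z_1)+\mu(t)(z_2-z_1)\bigr]\frac{\partial G_j}{\partial z_1},
\end{equation*}
which is exactly \eqref{PDE-G}. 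The initial condition $G_j(z_1,z_2,0)=z_1^{j}$ follows at once from \eqref{condini}, since only the term with $n=j$, $k=0$ survives at $t=0$.

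The main obstacle is the preliminary unification of the piecewise Kolmogorov equations into a single formula that automatically accommodates the boundary states of $S_j$; after that hurdle, the derivation of \eqref{PDE-G} reduces to two elementary index shifts and the standard identification of $\sum_{n,k} n\,p_{n,k}\,z_1^{n-1}z_2^{k}$ with $\partial G_j/\partial z_1$.
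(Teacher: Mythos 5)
Your proposal is correct and follows exactly the route the paper takes: the paper's proof simply asserts that \eqref{PDE-G} ``comes from the definition of $G_j$ and Eqs.\ \eqref{probeqdiff} after straightforward calculations,'' and your unification of the piecewise Kolmogorov equations (via the convention $p_{n,k}\equiv 0$ off $S_j$), term-by-term differentiation, and index shifts are precisely those calculations written out. The boundary-case checks and the identification of $\sum_{n,k} n\,p_{n,k}\,z_1^{n-1}z_2^{k}$ with $\partial G_j/\partial z_1$ are all sound.
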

	\begin{proof}
		The equation given in Eq.\ \eqref{PDE-G} comes from the definition of $G_j(z_1,z_2,t)$ and Eqs.\ \eqref{probeqdiff} after straightforward calculations. The initial condition is an immediate consequence of Eq.\ \eqref{condini}.
	\end{proof}
The PDE given in Eq.\ \eqref{PDE-G} generally does not have a closed-form solution. However, as will be seen in Sections 5 and 6, it can be solved using classical methods when the rates are constant, or temporally non-homogeneous and proportional. Note that for the usual birth-death process, the probability generating function is such that 
\begin{equation}\label{relazione1}
	G_j(z_1,z_2,t)=\left(G_1(z_1,z_2,t)\right)^j,\qquad |z_1|<1,\; |z_2|<1,\; t\ge 0,\; j\in\mathbb N.
\end{equation}
 From Eq.\ \eqref{PDE-G}, it follows from Eq.\ \eqref{relazione1} then $G_1(z_1,z_2,t)$ satisfies the same PDE and the same initial condition with $j=1$. Therefore, despite the validity of the relation \eqref{relazione1}, the difficulty of the problem in question remains unchanged. It is also observed that the validity of the identity \eqref{relazione1} will be further confirmed in Sections 5 and 6, where the  probability generating function is available in closed form. 
	\begin{remark}
		Clearly, when $z_2\to 1$ in Eq.\ \eqref{prob-genfun} we return to p.g.f. of the the classical one-dimensional birth-death process with linear rates. Indeed, for $z_2\to 1$, Eq.\ \eqref{PDE-G} becomes
		\begin{equation}\label{PDE-G-X}
			\frac{\partial}{\partial t} G_j=(1-z_1)(\mu(t)-z_1\lambda(t)) \frac{\partial}{\partial z_1} G_j, \qquad |z_1|\le1,\;t\ge 0,
		\end{equation}
		which is the partial differential equation solved by the p.g.f. of a one-dimensional birth-death process with linear rates 
		(see Ricciardi \cite{Ricciardi} where the explicit solution of Eq.\ \eqref{PDE-G-X} is available). 
	\end{remark}
	\par
	
	\subsection{Extinction of the fake news}
	From the differential equations \eqref{probeqdiff} and the related assumptions, it is easy to notice the existence of absorbing states for the probabilities of the process $(X(t), Y(t))$.
	More in detail, the set of the absorbing states is represented by 
	\begin{equation}\label{Aj}
		A_j:=\left\{(0,k):k\ge j\right\}, \qquad X(0)=j\in\mathbb N.
	\end{equation}
	The absorption in $A_j$ corresponds to the eventuality that no individuals are left willing to spread the news, and therefore the diffusion of information ends.
	The absorption probability, namely the probability that the process $(X(t), Y(t))$ reaches the absorbing set $A_j$, defined as 
	$$p_{A_j}(t):=\mathsf{P}\left((X(t), Y(t))\in A_j\right).$$ 
	The probability $p_{A_j}(t)$ can be easily computed by means of the p.g.f. \eqref{prob-genfun}, since
		\begin{equation}\label{Pa}
			p_{A_j}(t)=\left.G_j(z_1,z_2,t)\right|_{z_1\to 0, z_2\to 1}, \qquad t\ge 0.
		\end{equation}
	\section{The conditional moments}\label{Section4}
	Hereafter, we derive some explicit results about the moments of the process $(X(t),Y(t))$, with special attention to the correlation.
	\par
	\subsection{The conditional moments of $X(t)$}\label{Section3.1}
	We remark that $X(t)$ is a  time non-homogeneous linear birth-death process with individual rates given by $\lambda(t)$ and $\mu(t)$, whose moments are already known in literature (see, for example, Eq.\ (7.48) and Eq.\ (7.49) of \cite{Ricciardi}).
		Specifically, the linear time-inhomogeneous birth-death process $X(t)$ has the conditional expected value and the conditional variance given respectively by
		\begin{equation}
			\label{mX}
			m_X(t;j):=\mathsf E_j[X(t)]=j\eta(t),\qquad t\ge 0,
		\end{equation}
	and 
	\begin{equation}\label{VarX}
		Var_X(t;j):=\mathsf{Var}_j[X(t)]=j\eta^2(t)\left[\frac{1}{\eta(t)}+2\phi_X(t)-1\right],\qquad t\ge 0,
	\end{equation}
	 with
		\begin{equation}\label{psi}
			\eta(t):=\exp\left\{\int_0^t (\lambda(\tau)-\mu(\tau))\textrm{d}\tau\right\},\qquad 
				\phi_X(t):=\int_0^t\frac{\lambda(\tau)}{\eta(\tau)}\textrm{d}\tau,\qquad t\ge 0.
		\end{equation}

	\subsection{The conditional moments of $Y(t)$}\label{Section5}
	In this section, we provide the expected value and the variance of the stochastic process $Y(t)$. We remark that $Y(t)$ represents the number of the inactives, i.e.\ the individuals who have forgotten the information previously received. 
	\begin{proposition}
		The stochastic process $Y(t)$ has the conditional expected value given by
		\begin{equation} \label{mY}
			m_Y(t;j):=\mathsf{E}_j\left[Y(t)\right]=j\left[\phi_Y(t)-\eta(t)+1\right],\qquad t\ge 0,
		\end{equation}
		where
		$$
		\phi_Y(t):=\int_0^t\lambda(\tau)\eta(\tau)\textrm{d}\tau,\qquad t\ge 0.
		$$
		\begin{proof}
			Differentiating both sides of Eq.\ \eqref{PDE-G} with respect to $z_2$, we obtain 
			\begin{equation}\label{derz2PGF}
				\frac{\partial^2}{\partial z_2\partial t}G_j=\left[-z_1\lambda(t)(1-z_1)+\mu(t)(z_2-z_1)\right] \frac{\partial ^2}{\partial z_2\partial z_1}G_j+\mu(t)\frac{\partial }{\partial z_1}G_j.
			\end{equation}
			Hence, by evaluating Eq.\ \eqref{derz2PGF} for $z_1,z_2\to 1$, we get the following ordinary differential equation
			\begin{equation*}
				\frac{d}{dt}m_Y(t;j)=m_X(t;j)\mu(t),\qquad m_Y(0;j)=0.
			\end{equation*}
			Therefore, the result comes after straightforward calculations.
		\end{proof}
	\end{proposition}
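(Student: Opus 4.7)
The plan is to extract the ODE satisfied by $m_Y(t;j)$ from the PDE \eqref{PDE-G} governing the joint p.g.f.\ $G_j(z_1,z_2,t)$, exploit the fact that $m_Y(t;j)=\partial_{z_2} G_j\big|_{z_1=z_2=1}$, and then solve the resulting first-order linear ODE using the explicit form of $m_X(t;j)$ already recalled in Section \ref{Section3.1}. This mimics the standard p.g.f.\ technique for obtaining moments of Markov chains, and the two-dimensional setup does not introduce any new conceptual difficulty.

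The first step is to apply $\partial/\partial z_2$ to both sides of \eqref{PDE-G}. On the left we simply get $\partial^2 G_j/(\partial z_2\partial t)$. On the right we must use the product rule: the bracket $[-z_1\lambda(t)(1-z_1)+\mu(t)(z_2-z_1)]$ gets differentiated, producing a term $\mu(t)\,\partial_{z_1}G_j$, while the factor $\partial_{z_1}G_j$ gets differentiated, producing $[\,\cdot\,]\,\partial^2_{z_2 z_1}G_j$ with the same bracket.

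The decisive simplification occurs in the second step, when we evaluate the resulting identity at $z_1=z_2=1$. The bracket becomes $-\lambda(t)\cdot 0+\mu(t)\cdot 0=0$, so the mixed-partial term drops out entirely and we are left with
\begin{equation*}
\frac{d}{dt}m_Y(t;j)=\mu(t)\,m_X(t;j)=j\mu(t)\eta(t),
\end{equation*}
using \eqref{mX}. The initial condition $m_Y(0;j)=0$ follows immediately from $Y(0)=0$.

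The final step is to integrate and rewrite the primitive so it matches the stated formula. Direct integration gives $m_Y(t;j)=j\int_0^t\mu(\tau)\eta(\tau)\,d\tau$. Since $\eta'(\tau)=(\lambda(\tau)-\mu(\tau))\eta(\tau)$ by \eqref{psi}, we can substitute $\mu(\tau)\eta(\tau)=\lambda(\tau)\eta(\tau)-\eta'(\tau)$, whose integral over $[0,t]$ is $\phi_Y(t)-(\eta(t)-1)$, yielding the claimed expression $j[\phi_Y(t)-\eta(t)+1]$. I expect no real obstacle in the argument; the only point that requires a moment's care is recognising that the bracket in \eqref{PDE-G} vanishes at $(1,1)$ so that the PDE collapses to a genuine ODE in $t$ without any residual dependence on higher mixed derivatives of $G_j$, which is precisely what makes this moment computation tractable.
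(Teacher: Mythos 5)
Your proof is correct and follows essentially the same route as the paper: differentiate \eqref{PDE-G} with respect to $z_2$, evaluate at $z_1=z_2=1$ where the bracket vanishes to obtain $\frac{d}{dt}m_Y(t;j)=\mu(t)m_X(t;j)$, and integrate. You have merely made explicit the ``straightforward calculations'' the paper leaves to the reader, namely the substitution $\mu(\tau)\eta(\tau)=\lambda(\tau)\eta(\tau)-\eta'(\tau)$ that produces the stated form $j[\phi_Y(t)-\eta(t)+1]$.
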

	In the following proposition we provide the second order moment of $Y(t)$ given the initial condition $(X(0),Y(0))=(j,0)$. 
	To this aim, we will need the conditional mixed moment given in Section \ref{Sec:MixedMom}.
	\begin{proposition}
		The conditional second order moment of the stochastic process $Y(t)$ is given by
		\begin{equation} \label{mom2Y}
			m_{2,Y}(t;j):=\mathsf{E}_j\left[Y(t)^2\right]=\int_0^t \mu(\tau)m_X(\tau;j)\left[1+2\gamma(\tau)\right]\textrm{d}\tau,\qquad t\ge 0,
		\end{equation}
		where 
		\begin{equation} \label{gamma}
			\gamma(t):=\int_0^t{\mu(\tau)\eta(\tau)} \left(2\phi_X(\tau)+j-1\right)\textrm{d}\tau, \qquad t\ge0.
		\end{equation}
		\begin{proof}
			By differentiating once again Eq.\ \eqref{derz2PGF}, we obtain
			\begin{equation} \label{der2z2PGF}
				\frac{\partial ^3}{\partial z_2^2\partial t}G_j\left[-z_1\lambda(t)(1-z_1)+\mu(t)(z_2-z_1)\right] \frac{\partial^3}{\partial z_2^2\partial z_1}G_j+2\mu(t)\frac{\partial ^2}{\partial z_2\partial z_1}G_j.
			\end{equation}
			Combining Eqs.\ \eqref{derz2PGF} and \eqref{der2z2PGF}, and evaluating the resulting equality for $z_1,z_2\to 1$, we obtain
			\begin{equation}
				\frac{d}{dt}m_{2,Y}(t;j)=\mu(t) m_X(t;j)+2\mu(t)m_{X\cdot Y}(t;j),\qquad m_{2,Y}(0;j)=0,
			\end{equation}
			with $m_{X\cdot Y}(t;j)$ given in Eq.\ \eqref{mXY} below. Hence,  Eq.\ \eqref{mom2Y} above easily follows.
		\end{proof}
	\end{proposition}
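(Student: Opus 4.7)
The plan is to extract the ODE for $m_{2,Y}(t;j)$ from the partial differential equation \eqref{PDE-G} satisfied by the p.g.f., by differentiating twice with respect to $z_2$ and then setting $z_1,z_2\to 1$. Since this procedure unavoidably produces a term involving the mixed moment $\mathsf{E}_j[X(t)Y(t)]$, I will have to invoke the formula for $m_{X\cdot Y}(t;j)$ that is derived independently in Section \ref{Sec:MixedMom}; the rest is integration.

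More precisely, I would start by differentiating \eqref{derz2PGF} once again in $z_2$, obtaining \eqref{der2z2PGF}. Evaluating this expression at $z_1\to 1$, $z_2\to 1$, the bracket $[-z_1\lambda(t)(1-z_1)+\mu(t)(z_2-z_1)]$ in front of $\partial^3 G_j/(\partial z_2^2\partial z_1)$ vanishes, leaving only
\begin{equation*}
\frac{d}{dt}\,\mathsf{E}_j[Y(t)(Y(t)-1)] \;=\; 2\mu(t)\,\mathsf{E}_j[X(t)\,Y(t)].
\end{equation*}
Adding to this the already established identity $\frac{d}{dt}m_Y(t;j)=\mu(t)\,m_X(t;j)$ from the previous proposition, and using $m_{2,Y}=\mathsf{E}_j[Y(Y-1)]+m_Y$, I obtain
\begin{equation*}
\frac{d}{dt}m_{2,Y}(t;j) \;=\; \mu(t)\,m_X(t;j) + 2\mu(t)\,m_{X\cdot Y}(t;j),
\qquad m_{2,Y}(0;j)=0,
\end{equation*}
where the initial condition follows from $Y(0)=0$.

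The step that does the real work is now recognising that, by the expression for the mixed moment obtained in Section \ref{Sec:MixedMom}, one has the factorisation $m_{X\cdot Y}(t;j)=m_X(t;j)\,\gamma(t)$ with $\gamma(t)$ as in \eqref{gamma}. Substituting this into the above ODE gives
\begin{equation*}
\frac{d}{dt}m_{2,Y}(t;j) \;=\; \mu(t)\,m_X(t;j)\bigl[1+2\gamma(t)\bigr],
\end{equation*}
and direct integration over $[0,t]$, together with the initial condition, yields \eqref{mom2Y}.

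The main obstacle is logical rather than computational: the argument is circular unless one has already derived $m_{X\cdot Y}(t;j)$, and in particular its clean factorisation through $\eta(t)$ and $\gamma(t)$. I would therefore prove the mixed-moment proposition of Section \ref{Sec:MixedMom} first (by differentiating \eqref{PDE-G} in both $z_1$ and $z_2$, evaluating at $z_1=z_2=1$, obtaining a linear ODE for $m_{X\cdot Y}$ driven by $\mathsf{E}_j[X(X-1)]$, and integrating it against the factor $\eta(t)$ that comes from \eqref{psi} and \eqref{VarX}) and then invoke it here. Apart from keeping track of the factorial-versus-ordinary-moment bookkeeping when passing from $\partial^2 G_j/\partial z_2^2\bigr|_{(1,1)}$ to $m_{2,Y}$, the remaining manipulations are routine.
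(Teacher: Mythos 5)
Your proposal is correct and follows essentially the same route as the paper: differentiate the p.g.f.\ equation twice in $z_2$, evaluate at $z_1,z_2\to 1$, combine with the ODE for $m_Y(t;j)$ to obtain $\frac{d}{dt}m_{2,Y}=\mu\,m_X+2\mu\,m_{X\cdot Y}$, substitute the factorisation $m_{X\cdot Y}=m_X\gamma$ from the mixed-moment proposition, and integrate. Your explicit handling of the factorial-moment bookkeeping and of the logical ordering (proving the mixed-moment result first) only makes more precise what the paper leaves implicit.
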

	\subsection{The conditional mixed moments}\label{Sec:MixedMom}
	Hereafter, we obtain the expression of the conditional expected value of $X(t)\cdot Y(t)$ and some other indexes to study the correlation of the processes $X(t)$ and $Y(t)$. 
	\begin{proposition}
		The conditional mixed moment of the first order of  $(X(t),Y(t))$  is 
		\begin{equation} \label{mXY}
			m_{X\cdot Y}(t;j)=\mathsf E_j[X(t)Y(t)]=m_X(t;j)\gamma(t),\qquad t\ge 0
		\end{equation}
		where $m_X(t;j)$ is given in Eq.\ \eqref{mX} and $\gamma(t)$ is defined in Eq.\ \eqref{gamma}.
		\begin{proof}
			By differentiating twice Eq.\ \eqref{PDE-G}, once with respect to $z_1$ and once with respect to $z_2$, we obtain the following equation
			\begin{equation} \label{derivataG}
				\begin{aligned}
					\frac{\partial^3}{\partial z_1\partial z_2\partial t} G_j&=\left[-z_1\lambda(t)(1-z_1)+\mu(t)(z_2-z_1)\right] \frac{\partial^3}{\partial z_2\partial z_1^2}G_j\\
					&+ \mu(t)\frac{\partial ^2}{\partial z_1^2}G_j+\left[-\lambda(t)-\mu(t)+2z_1\lambda(t)\right]\frac{\partial^2}{\partial z_2\partial z_1}G_j.
				\end{aligned}
			\end{equation}
			Therefore, by considering $z_1,z_2\to 1$, from Eq.\ \eqref{derivataG} we get the following ordinary differential equation
			\begin{equation*}
				\frac{d}{dt}m_{X\cdot Y}(t;j)-m_{X\cdot Y}(t;j)\left(\lambda(t)-\mu(t)\right)=\left(m_{2,X}(t;j)-m_X(t;j)\right)\mu(t),\qquad m_{X\cdot Y}(0;j)=0.
			\end{equation*}
			Hence, Eq.\ \eqref{mXY} comes after straightforward calculations and by considering Eqs.\ \eqref{mX} and \eqref{VarX}.
		\end{proof}
	\end{proposition}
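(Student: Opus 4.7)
The plan is to use the PDE \eqref{PDE-G} for the p.g.f.\ to extract an ordinary differential equation for the mixed moment, then solve it explicitly. The key observation is that
\[
m_{X\cdot Y}(t;j)=\left.\frac{\partial^{2}}{\partial z_{1}\partial z_{2}}G_{j}(z_{1},z_{2},t)\right|_{z_{1}=z_{2}=1},
\]
so the natural move is to differentiate \eqref{PDE-G} once with respect to $z_{1}$ and once with respect to $z_{2}$, producing precisely the identity \eqref{derivataG}. Evaluating that identity at $z_{1}\to 1$, $z_{2}\to 1$ kills the coefficient $-z_{1}\lambda(t)(1-z_{1})+\mu(t)(z_{2}-z_{1})$ of the third-order term, while the remaining contributions produce a closed first-order linear ODE
\[
\frac{d}{dt}m_{X\cdot Y}(t;j)-\bigl(\lambda(t)-\mu(t)\bigr)\,m_{X\cdot Y}(t;j)=\mu(t)\bigl(m_{2,X}(t;j)-m_{X}(t;j)\bigr),
\]
with initial condition $m_{X\cdot Y}(0;j)=0$ coming from $Y(0)=0$.

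The next step is to solve this linear ODE by the standard integrating-factor method. Since $\int_{0}^{t}(\lambda(\tau)-\mu(\tau))\,d\tau=\log\eta(t)$ by \eqref{psi}, the integrating factor is $1/\eta(t)$, and the ODE becomes
\[
\frac{d}{dt}\!\left[\frac{m_{X\cdot Y}(t;j)}{\eta(t)}\right]=\frac{\mu(t)}{\eta(t)}\bigl(m_{2,X}(t;j)-m_{X}(t;j)\bigr).
\]
Integration on $[0,t]$ gives $m_{X\cdot Y}(t;j)=\eta(t)\int_{0}^{t}\mu(\tau)\eta(\tau)^{-1}\bigl(m_{2,X}(\tau;j)-m_{X}(\tau;j)\bigr)d\tau$.

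The remaining task is to show the right-hand side equals $m_{X}(t;j)\gamma(t)=j\eta(t)\gamma(t)$, with $\gamma(t)$ as in \eqref{gamma}. For this one uses the explicit formulas \eqref{mX} and \eqref{VarX} from Section \ref{Section3.1} to recompute $m_{2,X}(t;j)=\mathsf{Var}_{j}[X(t)]+m_{X}(t;j)^{2}$; a short algebraic simplification yields
\[
m_{2,X}(t;j)-m_{X}(t;j)=j\,\eta(t)^{2}\bigl(2\phi_{X}(t)+j-1\bigr),
\]
so that $\mu(\tau)\eta(\tau)^{-1}(m_{2,X}(\tau;j)-m_{X}(\tau;j))=j\,\mu(\tau)\eta(\tau)\bigl(2\phi_{X}(\tau)+j-1\bigr)$, which integrates to $j\,\gamma(t)$ by definition. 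Multiplying back by $\eta(t)$ recovers \eqref{mXY}.

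The main obstacle is the final algebraic reduction of $m_{2,X}-m_{X}$ into the compact form $j\eta^{2}(2\phi_{X}+j-1)$: the mean and variance expressions contain several terms of mixed structure and a careless combination would obscure the clean factor that makes $\gamma(t)$ appear. Once that simplification is carried out, the rest is a routine application of the integrating-factor formula and the initial condition. Note also that using the factorial moment $\mathsf{E}_{j}[X(X-1)]$ directly (which is what $\partial_{z_{1}}^{2}G_{j}|_{(1,1)}$ produces) may shorten the bookkeeping compared to going through $m_{2,X}$ and $m_{X}$ separately.
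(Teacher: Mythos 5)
Your proposal is correct and follows essentially the same route as the paper: differentiate the PDE \eqref{PDE-G} once in $z_1$ and once in $z_2$, pass to $z_1,z_2\to 1$ to obtain the linear ODE, and integrate it using \eqref{mX} and \eqref{VarX}. You merely make explicit the ``straightforward calculations'' the paper leaves implicit, and your reduction $m_{2,X}-m_{X}=j\eta^{2}(2\phi_{X}+j-1)$ checks out against \eqref{mX}, \eqref{VarX} and the definition \eqref{gamma}.
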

	In order to get deeper information about the correlation of the random processes $X(t)$ and $Y(t)$, we can introduce and study the following index defined as the ratio between the mixed moment $m_{X\cdot Y}(t;j)$ and the product of the expected values $m_X(t;j)$ and $m_Y(t;j)$. 
	\begin{definition}\label{coeffr}
		The correlation index of $(X(t), Y(t))$ is the ratio between $m_{X\cdot Y}(t)$ and the product $m_{X}(t)\cdot m_Y(t)$, i.e.
		\begin{equation}\label{corr-index}
		r(t;j):=\frac{m_{X\cdot Y}(t;j)}{m_X(t;j)\cdot m_Y(t;j)}=\frac{\gamma(t)}{m_Y(t;j)},\qquad t\ge 0,
		\end{equation}
		where $\gamma(t)$ is defined in Eq.\ \eqref{gamma} and $m_Y(t;j)$ in Eq.\ \eqref{mY}. 
	\end{definition}
Clearly, the ratio $r(t;j)$ represents an adimensional correlation index, in the sense that 
$$
r(t;j)\begin{cases}
	<1,\qquad \text{if $X(t)$ and $Y(t)$ are negatively correlated}\vspace{-0.2cm}\\
	=1,\qquad \text{if $X(t)$ and $Y(t)$ are uncorrelated}\vspace{-0.2cm}\\
	>1,\qquad \text{if $X(t)$ and $Y(t)$ are positively correlated.}
\end{cases}
$$
 For the computation of the ratio $r(t;j)$ we do not need the mixed moment $m_{X\cdot Y}(t)$ but only the function $\gamma(t)$, given in \eqref{gamma}, and the conditional mean $m_Y(t;j)$. So the analysis of the correlation index $r(t;j)$ is recommended especially when the determination of the mixed moment $m_{X\cdot Y}(t)$  is complicated.
	\section{Constant intensities}\label{Section7}
	In this section, we analyze the process $(X(t), Y(t))$ in the time-homogeneous  case, i.e. when the individual intensities are constant, i.e.\ 
	$$\lambda(t)=\lambda>0,\qquad \mu(t)=\mu>0,\qquad t\ge 0.$$
	Let us start from the problem of finding the solution of Eq.\ \eqref{PDE-G}. 
	Note that Eq.\ \eqref{PDE-G} can be solved by using the method of characteristics (see Section 3.2 of Evans \cite{Evans}, for instance). Such method converts the partial differential equation  into an appropriate system of ordinary differential equations. More in detail, for the problem under analysis, from Eq.\ \eqref{PDE-G} we get the following system
	\begin{equation}\label{PDEsG-const}
		\begin{cases}
			\frac{\partial t}{\partial v}=1\\
			\frac{\partial z_1}{\partial v}=z_1(\lambda+\mu)-z_2\mu-z_1^2\lambda=:P(z_1)\\
			\frac{\partial z_2}{\partial v}=0\\
			\frac{\partial G}{\partial v}=0.
		\end{cases}
	\end{equation}
	From the first equation of \eqref{PDEsG-const} we have $t=v$. Moreover, since $(\lambda+\mu)^2-4z_2\lambda\mu>0$ for any $\lambda,\mu>0$, $|z_2|\le1$, then the polynomial $P(z_1)$ has two different real roots, namely
	$$
	\xi_{1,2}:=\xi_{1,2}(z_2)=\frac{\lambda+\mu\pm\sqrt{(\lambda+\mu)^2-4z_2\lambda\mu}}{2\lambda}, \qquad \xi_1<\xi_2.
	$$
	Consequently, the solution of the second equation of \eqref{PDEsG-const} is given by 
	\begin{equation}\label{z1-caso1}
		z_1(t)=\begin{cases}
			\displaystyle \frac{\xi_2(z_1-\xi_1)-\xi_1(z_1-\xi_2)e^{\lambda t (\xi_2-\xi_1)}}{z_1-\xi_1- (z_1-\xi_2)e^{\lambda t (\xi_2-\xi_1)}}, \qquad &\lambda\neq\mu\\
			\displaystyle \frac{z_1\left(e^{2\mu t \sqrt{1-z_2}}(1-\sqrt{1-z_2})-1-\sqrt{1-z_2}\right)+z_2\left(1-e^{2\mu t \sqrt{1-z_2}}\right)}{1-z_1-e^{2\mu t \sqrt{1-z_2}}(1-z_1+\sqrt{1-z_2})-\sqrt{1-z_2}},\qquad &\lambda=\mu.
		\end{cases}
	\end{equation}
 We remark that the above expression of $z_1(t)$, for $\lambda=\mu$, is obtained 
	 by an application of L'H\^{o}pital's rule to case for $\lambda\neq \mu$. This holds for various other equations given in the rest of the paper.  
	Finally, from the last equation of \eqref{PDEsG-const}, one has 
	\begin{equation} \label{pgf-const-expression}
	G_j(z_1,z_2,t)=\left[\widetilde G(z_1,z_2,t)\right]^j,\qquad t\ge 0,
	\end{equation}
	where 
	\begin{equation}\label{G-cost}
		\begin{aligned}
			\widetilde G(z_1,z_2,t)
			=\begin{cases}
				\displaystyle \frac{\xi_2(z_1-\xi_1)-\xi_1(z_1-\xi_2)e^{\lambda t (\xi_2-\xi_1)}}{z_1-\xi_1- (z_1-\xi_2)e^{\lambda t (\xi_2-\xi_1)}}, \quad &\lambda\neq\mu\\
				\displaystyle \frac{z_1\left(e^{2\mu t \sqrt{1-z_2}}(1-\sqrt{1-z_2})-1-\sqrt{1-z_2}\right)+z_2\left(1-e^{2\mu t \sqrt{1-z_2}}\right)}{1-z_1-e^{2\mu t \sqrt{1-z_2}}(1-z_1+\sqrt{1-z_2})-\sqrt{1-z_2}},\quad &\lambda=\mu,
			\end{cases}
		\end{aligned}
	\end{equation}
	for any $t\ge 0$. It is worth to notice that, owing to Eq.\ \eqref{prob-genfun}, $G_j(1,1,t)=1$ as expected. We remark that also in the time-homogeneous case, expanding the p.g.f. as a power series with respect to $z_1$ and $z_2$ is complicated, since the dependence on the variables $z_i$, $i=1,2$, is quite intricate. Therefore, providing explicit formulae for the probabilities $p_{n,k}(t)$ is not an easy task.
	\par 
	Although, as previously mentioned, finding the closed-form expression of the probabilities $p_{n,k}(t)$ is complicated, it may be interesting to analyze $p_{0,k}(t)$ for $t\to+\infty$. This quantity represents the portion of individuals who have had contact with the fake news in the long term by the end of the diffusion. It is analogous to the size of the outbreak which in the epidemiological context represents the portion of individuals who have been infected by the epidemic in the long term.
	From Eq.\ \eqref{prob-genfun}, we have 
	$$
	G_j(0,z_2,t)=\sum_{k=j}^{+\infty} p_{0,k}(t)z_2^k,\qquad t\ge 0,
	$$
	hence 
	$$
	\lim_{t\to+\infty} p_{0,k}(t)=\lim_{t\to+\infty} \frac{1}{k!} \left.\frac{\partial^k }{\partial z_2^k}G_j(0,z_2,t)\right|_{z_2\to 0},\qquad k\ge j.
	$$
	In this specific case, from Eq.\ \eqref{G-cost}, the following limit holds
	$$
	\lim_{t\to+\infty}G_j(0,z_2,t)= \lim_{t\to+\infty}\left(\widetilde G(0,z_2,t)\right)^j=\xi_1^j.
	$$
	By considering the expansion in power series of $\xi_1^j$, we finally obtain 
	\begin{equation}\label{p_0kinf}
	\lim_{t\to+\infty} p_{0,k}(t)=\left(\frac{\lambda\mu}{(\lambda+\mu)^2}\right)^k\left(\frac{\lambda+\mu}{\lambda}\right)^j\left(\binom{2k-j-1}{k-1}-\binom{2k-j-1}{k}\right),\qquad k\ge j.
	\end{equation}
	We remark that a different approach useful to come to similar results has been developed in Artalejo {\em et al}.\ \cite{Artalejo}	
	for the analysis of the number of proliferation events to reach the maximum clonal size in a stochastic model of cell proliferation. 
	Some plots of $\lim_{t\to+\infty}p_{0,k}(t)$ are given in Figure \ref{fig:Figura2maggio2024}.
	\begin{figure}[h]
		\centering
		 \includegraphics[scale=0.5]{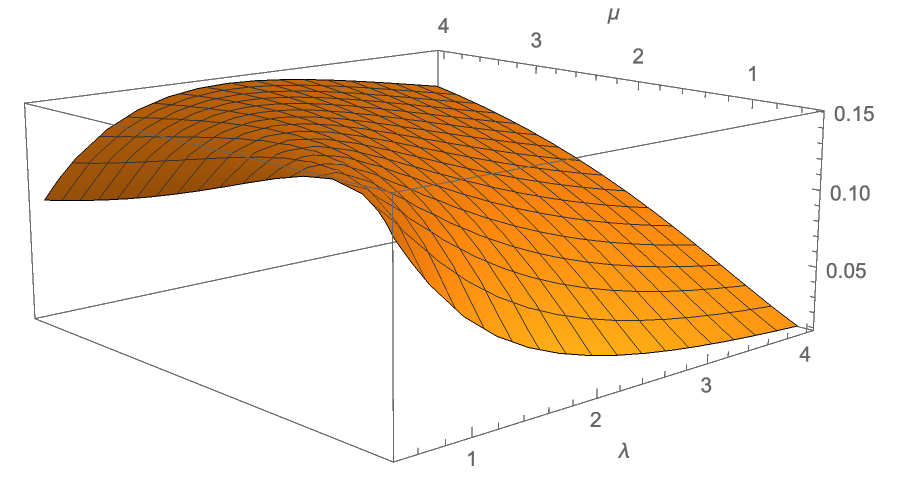}
		\caption{The probability $\lim_{t\to+\infty}p_{0,k}(t)$ for $j=1$ and $k=2$.}
		\label{fig:Figura2maggio2024}
	\end{figure}
	\par 
	Recalling Eq. \eqref{Pa}, in this case the absorption probability is given by
	\begin{equation}\label{probPAcost}
		p_{A_j}(t)=\begin{cases}
		\displaystyle	\left(\frac{\mu e^{(\lambda-\mu)t}-\mu}{\lambda e^{(\lambda-\mu)t}-\mu}\right)^j,\qquad &\lambda\neq \mu\\
		\displaystyle	\left(\frac{\mu t}{1+\mu t}\right)^j,\qquad &\lambda=\mu,
		\end{cases}
	\end{equation}
In Figure \ref{fig:Figure8} some plots of the absorption probability \eqref{probPAcost} are shown. 
Note that $p_{A_j}(t)$ converges to 1 when $\lambda\le \mu$, whereas when $\lambda>\mu$, indeed the following limit holds
\begin{equation}
\label{limite-PAj}
\lim_{t\to+\infty}p_{A_j}(t)=\begin{cases}
	\displaystyle \left(\frac{\mu}{\lambda}\right)^j,\qquad &\lambda>\mu\\
	1,\qquad &\lambda\le \mu.
\end{cases}
\end{equation}
\begin{figure}[h]
\centering
\subfigure[]{\includegraphics[scale=0.5]{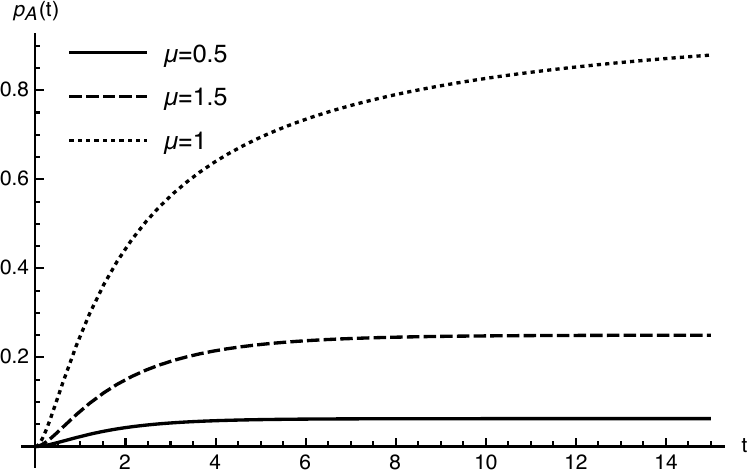}}\qquad
\subfigure[]{\includegraphics[scale=0.5]{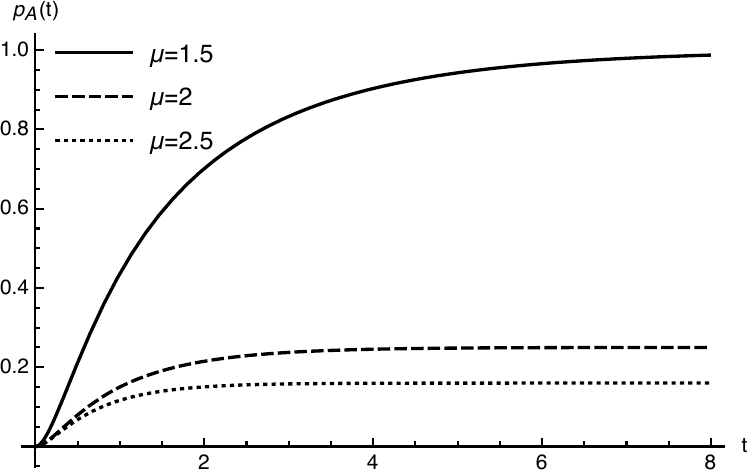}}
\caption{The absorption probability $p_{A_j}(t)$ for $\lambda=1$ and $j=1$.}
\label{fig:Figure8}
\end{figure}

Note that Eq.\ \eqref{limite-PAj} is in agreement with Eq.\ \eqref{p_0kinf}. Indeed, from Eq.\ \eqref{p_0kinf}, it follows that  
$
\lim_{t\to+\infty}\sum_{k=j}^{+\infty}p_{0,k}(t)=\lim_{t\to+\infty}p_{A_j}(t),
$ as expected. 
Moreover, we remark that the results given in Eqs.\ \eqref{probPAcost} and   \eqref{limite-PAj} are not fully novel, indeed they correspond to $p_0(t)$ and  $\lim_{t\to+\infty}p_0(t)$ given Section 6.4.3 of Allen \cite{Allen} for the time-homogeneous linear birth-death process.
\begin{example}
Disclosing an explicit expression for the probabilities $p_{0,k}(t)$, $k\in\mathbb N$, is complex, since obtaing the derivative of order $k$ of the p.g.f. $G_j(z_1,z_2,t)$ with respect to $z_2$ is a hard task. As example, we consider the most simple case in which the process $(X(t),Y(t))$ starts from $(1,0)$ and it is then absorbed at $(0,1)$. In particular, owing to Eq.\ \eqref{pgf-const-expression}, the probability $p_{0,1}(t)$ is given by:
$$
p_{0,1}(t)=\left.\frac{\partial}{\partial z_2}G_j(z_1,z_2,t)\right|_{z_1,z_2\to 0}=
	\displaystyle \frac{\mu (1-e^{-(\lambda+\mu)t})}{\lambda+\mu}
$$
for any $t\ge 0$.
See Figure \ref{fig:Figure9} for some plots of the probability $p_{0,1}(t)$. Note that $p_{0,1}(t)$ is increasing with respect to $t$ and $\mu$, whereas it is decreasing with respect to $\lambda$. 
Moreover, the following asymptotic behaviour holds
$$
\lim_{t\to+\infty} p_{0,1}(t)=
	\frac{\mu}{\lambda+\mu}.
$$
\begin{figure}[t]
	\centering
	\subfigure[]{\includegraphics[scale=0.5]{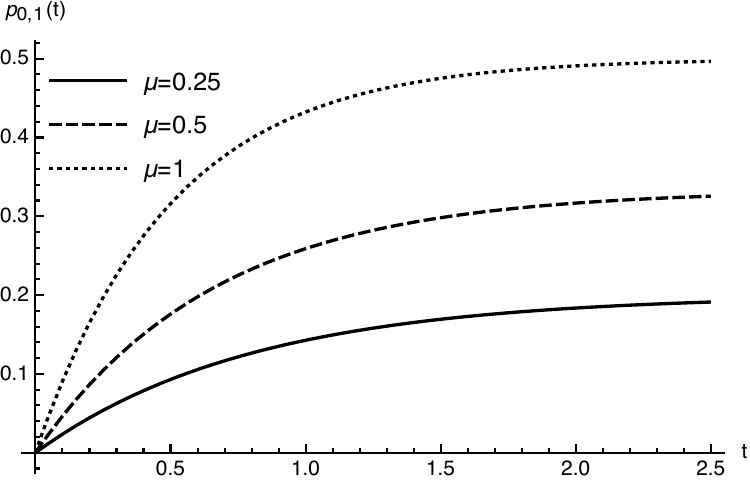}}\qquad
	\subfigure[]{\includegraphics[scale=0.5]{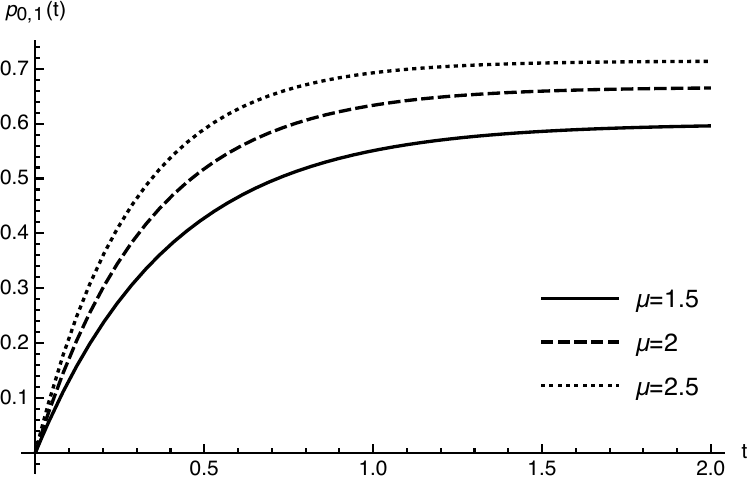}}
	\caption{The probability $p_{0,1}(t)$ for $\lambda=1$ and $j=1$.}
	\label{fig:Figure9}
\end{figure}
\end{example}
\subsection{The conditional moments of $X(t)$}
Regarding the linear time-homogeneous birth-death process $X(t)$, recalling Eqs. \eqref{mX} and \eqref{VarX}, the conditional expected value and the conditional variance of $X(t)$ are given by
		$$
		m_X(t;j)=je^{(\lambda-\mu)t}, \qquad Var_X(t;j)=
		\begin{cases}
			\displaystyle \frac{j(\mu+\lambda)}{\lambda-\mu}e^{(\lambda-\mu)t}\left(e^{(\lambda-\mu)t}-1\right),\qquad&\lambda\neq \mu\\
			2j\mu t,\qquad &\lambda=\mu,
		\end{cases}
		$$
		for any $t\ge 0$.
		Note that the following limits hold
		$$
		\lim_{t\to+\infty}m_X(t;j)=\begin{cases}
			+\infty \qquad &\lambda>\mu\\
			j,\qquad &\lambda=\mu\\
			0,\qquad &\lambda<\mu,
		\end{cases}\qquad
		\lim_{t\to+\infty}Var_X(t;j)=\begin{cases}
			+\infty \qquad &\lambda>\mu\\
			0,\qquad &\lambda\le\mu,
		\end{cases}
		$$	
		and
		$$
		\lim_{j\to+\infty}m_X(t;j)=\lim_{j\to+\infty}Var_X(t;j)=+\infty.
		$$
		Moreover, let us now determine the Fano factor $X(t)$ defined as the ratio between the variance and the mean. In this case, this index is given by
		\begin{equation}\label{DispersX}
			D_X(t;j):=\frac{Var_X(t;j)}{m_X(t;j)}=\begin{cases}
				\displaystyle \frac{(\mu+\lambda)(e^{(\lambda-\mu)t}-1)}{\lambda-\mu},\qquad &\lambda\neq\mu\\
				2\mu t,\qquad &\lambda=\mu,
			\end{cases}
		\end{equation}
		for any $t\ge0$. This is a dispersion measure that has the same units of the related data and it is often used for comparison with the Poisson process, for which the factor is equal to 1. We remark that $D_X(0,j)=0$, and that $D_X(t;j)$ is increasing with respect to $t$.   
		In particular, the following limits hold
		$$
		\lim_{t\to+\infty}D_X(t;j)=\begin{cases}
			\displaystyle \frac{\lambda+\mu}{\mu-\lambda},\qquad &\lambda<\mu\\
			+\infty, \qquad &\lambda\ge \mu.
		\end{cases}
		$$
		
		From Eq.\ \eqref{DispersX}, we have that the process $X(t)$ is underdispersed, i.e.\ $D_X(t;j)<1$, for all 
		$$
		t<\begin{cases}
			\log\frac{2\lambda}{\lambda-\mu}, \qquad &\lambda\neq \mu\\
			\frac{1}{2\mu}, \qquad &\lambda=\mu.
		\end{cases}
		$$
		 and it is overdispersed otherwise.
		Moreover, the coefficient of variation of $X(t)$ is given by
		$$
		\sigma_X(t;j):=	\frac{\sqrt{Var_X(t;j)}}{m_X(t;j)}=
		\begin{cases}
			\frac{e^{-\lambda t/2}}{j}
			\displaystyle \sqrt{\frac{(\lambda+\mu)j}{\lambda-\mu}\left(e^{\lambda t}-e^{\mu t}\right)},\qquad &\lambda\neq \mu\\
			\displaystyle \sqrt{\frac{2\mu t}{j}},\qquad &\lambda=\mu,
		\end{cases}
		$$
		for any $t\ge 0$. 
		We note that $\sigma_X(0;j)=0$ and that $\sigma_X(t;j)$ is increasing with respect to $t$. Moreover, the following limits hold
		$$
		\lim_{t\to +\infty} \sigma_X(t;j)=\begin{cases}
			0,\qquad &\lambda<\mu\\
			+\infty, \qquad &\lambda=\mu\\
			\displaystyle \sqrt{\frac{\lambda+\mu}{j(\lambda-\mu)}},\qquad &\lambda>\mu,
		\end{cases}\qquad \lim_{j\to +\infty}\sigma_X(t;j)=0.
		$$
		Therefore, for large times and for $\lambda <\mu$ or for large initial values, the process $X(t)$ turns out to be less variable around its mean. On the contrary, when $\lambda=\mu$, the process $X(t)$ is more variable around its mean.
Note that some results contained in this Section are well-known in the literature and can be found, for instance, in Section 6.4.3 of Allen \cite{Allen}.
			
		\subsection{The conditional moments of $Y(t)$}
	Now, we consider the process $Y(t)$. The conditional expected value of $Y(t)$ is, in time-homogeneous case, given by
		$$
		m_Y(t;j)=
		\begin{cases}
			\displaystyle	\frac{j\mu}{\lambda-\mu}\left(e^{(\lambda-\mu)t}-1\right),\qquad &\lambda\neq\mu\\
			j\mu t,\qquad &\lambda=\mu,
		\end{cases}
		$$
		for any $t\ge 0$.
		In Figure \ref{fig:Figure4}(a) some plots of the conditional expected value $m_Y(t;j)$ are provided. Note that in any case, $m_Y(t;j)$ is an increasing function but when $\lambda>\mu$ the concavity is upward, when $\lambda<\mu$ the concavity is downward. 
		Note that when $\lambda>\mu$, the process $X(t)$ shows an exponential growth trend and thus the growth trend of $m_Y(t;j)$ is increasing with upward concavity. Moreover, the following limits hold
		$$
		\lim_{t\to+\infty}m_Y(t;j)=\begin{cases}
			+\infty\qquad &\lambda\ge\mu\\
			\displaystyle\frac{j\mu}{\mu-\lambda},\qquad &\lambda<\mu,
		\end{cases}\qquad
		\lim_{j\to+\infty}m_Y(t;j)=+\infty.
		$$
		\begin{figure}[t]
			\centering
			\subfigure[]{\includegraphics[scale=0.5]{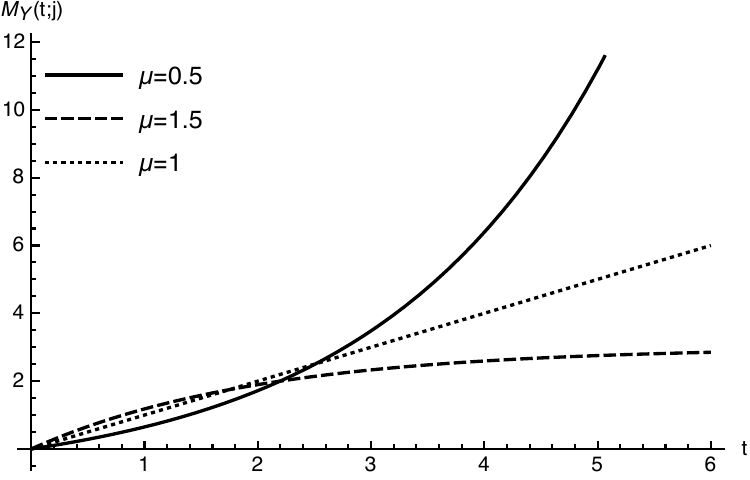}}\qquad
			\subfigure[]{\includegraphics[scale=0.5]{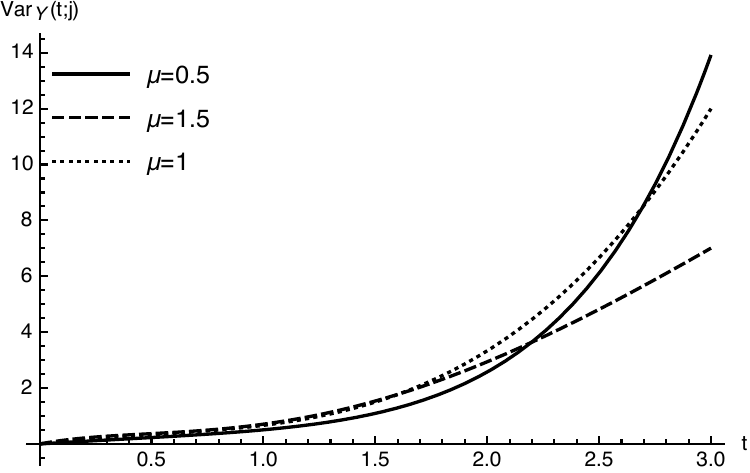}}
			\caption{The conditional expected value $m_Y(t;j)$ and the conditional variance $Var_Y(t)$ for $j=1$ and $\lambda=1$.}
			\label{fig:Figure4}
		\end{figure}
		Table \ref{tab:Tabella1} shows the sign of the difference between $m_X(t;j)$ and $m_Y(t;j)$. Note that when the spreader intensity $\lambda$ is at least equal to the double of the inactivity intensity $\mu$, then $m_X(t;j)$ is greater than $m_Y(t;j)$ for any $t\ge 0$, meaning that the expected number of spreaders is always greater than the expected number of forgetful individuals. Whereas, after an initial time interval $(0,\widetilde t)$ then $m_Y(t;j)>m_X(t;j)$ for any $t>\widetilde t>0$ when  $\lambda\ge \mu$, otherwise $m_Y(t;j)<m_X(t;j)$ when $\lambda<\mu$ with 
		\begin{equation}\label{tildet}
			\widetilde t=\begin{cases}
				\displaystyle\frac1\mu, \qquad &\lambda=\mu\\
				\displaystyle	\frac{1}{\lambda-\mu}\log\left(\frac{\mu}{2\mu-\lambda}\right),\qquad &\lambda\neq \mu.
			\end{cases}
		\end{equation}
		\begin{table}[t]
			\caption{The sign of the difference between the conditional means for different range of values of $\lambda$ and $\mu$ and $\widetilde t$ defined in Eq.\ \eqref{tildet}.}
			\label{tab:Tabella1}
			\centering
			\begin{tabular}{l|l} 
				&                                                                                                                                                                                                                                                                           \\ 
				\hline
				$0<\lambda<\mu$                   & $\displaystyle m_X(t;j)>m_Y(t;j) \iff t>\widetilde t$  \\
				$\mu\le\lambda<2\mu$ 			  & $\displaystyle m_X(t;j)>m_Y(t;j)\iff t<\widetilde t$  \\
				$\lambda\ge2\mu$ 				& $\displaystyle m_X(t;j)>m_Y(t;j)\qquad \forall t\ge 0$.                                                                                                                                                                                
			\end{tabular}
		\end{table}
	
		Furthermore,  the conditional second order moment of $Y(t)$ is given by
		$$
		\begin{aligned}
			m_{2,Y}(t;j)
			=\begin{cases}
				\displaystyle\frac{j\mu}{(\lambda-\mu)^3}\left(-\lambda^2-\lambda\mu+j\lambda\mu-j\mu^2+e^{2(\lambda-\mu)t}\mu(\lambda+j\lambda+\mu-j\mu)\right.\\
				\displaystyle\left.+e^{(\lambda-\mu)t}(\mu-\lambda)\left((2j-1)\mu+\lambda(4\mu t -1)\right)\right),\qquad &\lambda\neq \mu\\
				\displaystyle\frac{1}{3}j\mu t \left(3+3(j-1)\mu t + 2\mu^2t^2\right),\qquad &\lambda=\mu,
			\end{cases}
		\end{aligned}
		$$
		for any $t\ge 0$.
		In this case, the expression of the conditional variance of $Y(t)$  is also available in closed form and is given by
		$$
		\begin{aligned}
			&Var_Y(t;j):=\mathsf{Var}_j\left(Y(t)\right)\\
			&=\begin{cases}
				\displaystyle	\frac{j\mu}{(\lambda-\mu)^3}\left(-\lambda(\lambda+\mu)+e^{2(\lambda-\mu)t}\mu(\lambda+\mu)+e^{(\lambda-\mu)t}(\mu-\lambda)(-\mu+\lambda(4\mu t-1))\right),\quad&\lambda\neq \mu\\
				\displaystyle \frac{j\mu t}{3}\left(3-3\mu t +2\mu^2t^2\right),\quad &\lambda=\mu,
			\end{cases}
		\end{aligned}
		$$
		for any $t\ge 0$.
		Moreover, note that the following limits hold
		$$
		\lim_{t\to +\infty}Var_Y(t;j)=\begin{cases}
			\displaystyle\frac{j\lambda\mu (\lambda+\mu)}{(\mu-\lambda)^3},\qquad &\lambda<\mu\\
			\displaystyle +\infty,\qquad &\lambda\ge \mu,
		\end{cases}\qquad
		\lim_{j\to +\infty}Var_Y(t;j)=+\infty.
		$$
		Hence, the conditional mean $m_Y(t;j)$ is a significant index for the description of the evolution of $Y(t)$ when $\lambda<\mu$, since in this case the corresponding conditional variance is bounded.
		Some plots of the conditional variance $Var_Y(t;j)$ are shown in Figure \ref{fig:Figure4}(b).
		In this case, the Fano factor $D_Y(t;j)$ is given by
		$$
		D_Y(t;j)=
		\begin{cases}
			\displaystyle \frac{e^{2(\lambda-\mu)t}\mu(\lambda+\mu)-e^{(\lambda-\mu)t}(\lambda-\mu)(\lambda(4\mu t-1)-\mu)-\lambda(\lambda+\mu)}{(\lambda-\mu)^2(e^{(\lambda-\mu)t}-1)},\qquad &\lambda\neq \mu\\
			\displaystyle \frac{3-3\mu t+2\mu^2t^2}{3},\qquad &\lambda=\mu,
		\end{cases}
		$$
		for any $t\ge 0$. Moreover the following limit holds
		$$
		\lim_{t\to+\infty}D_Y(t;j)=\begin{cases}
			+\infty\qquad &\lambda\ge \mu\\
			\displaystyle \frac{\lambda(\lambda+\mu)}{(\mu-\lambda)^2},\qquad &\lambda<\mu.
		\end{cases}
		$$
		Furthermore, the coefficient of variation is given by
		$$
		\sigma_Y(t;j)=
		\begin{cases}\sqrt{\frac{
					\displaystyle e^{2(\lambda-\mu)t}\mu(\lambda+\mu)+e^{(\lambda-\mu)t}(\mu-\lambda)(\lambda(4\mu t -1)\lambda-\mu)-\lambda(\lambda+\mu)}{(\lambda-\mu)j\mu(e^{(\lambda-\mu)t}-1)^2}},\quad &\lambda\neq \mu\\
			\displaystyle \sqrt{\frac{3-3\mu t+2\mu^2t^2}{3j\mu t}},\quad &\lambda=\mu,
		\end{cases}
		$$
		for any $t\ge 0$. We point out that $\sigma_Y(t;j)$ is convergent for $\lambda\neq\mu$ as $t\to+\infty$, indeed we have
		$$
		\lim_{t\to+\infty}\sigma_Y(t;j)=\begin{cases}
			\sqrt{\frac{\lambda(\lambda+\mu)}{j\mu(\mu-\lambda)}},\qquad &\lambda<\mu\\
			+\infty,\qquad &\lambda=\mu\\
			\sqrt{\frac{\lambda+\mu}{j(\lambda-\mu)}},\qquad &\lambda>\mu.
		\end{cases}
		$$
		\subsection{Correlation between $X(t)$ and $Y(t)$}
In this section, we analyze the correlation between the two components of the process $(X(t), Y(t))$. {Considering that $X(t)$ represents the number of spreaders and $Y(t)$ the number of inactives, we expect that at the beginning of the diffusion of the fake news the two processes are negatively correlated, since it is reasonable that in the initial diffusion phase all individuals behave as gullible and therefore they have the intention of spreading the rumor. This intention is then gradually lost by the individuals and therefore we  expect that the two
	processes ultimately are positively correlated. }To this aim, let us start from the conditional expected value of $X(t)\cdot Y(t)$ which is given by $m_{X\cdot Y}(t;j)=j\mu \mathcal{M}(t;j)$, where
			$$
			\mathcal{M}(t;j)=
			\begin{cases}
				\displaystyle\frac{e^{(\lambda-\mu)t}}{\mu-\lambda}\left(2\lambda t-\frac{(e^{(\lambda-\mu)t}-1)(\lambda+j\lambda+\mu-j\mu)}{\lambda-\mu}\right),\qquad &\lambda\neq \mu\\
				\displaystyle	t (j+\mu t-1),\qquad&\lambda=\mu,
			\end{cases}
			$$
			for $t\ge 0$.
			In this case, it is possible to derive an explicit expression for the conditional covariance, which is given by 
			$$Cov(t;j):=\mathsf{Cov}_j(X(t),Y(t))=\mathsf{Cov}\left[(X(t), Y(t))|(X(0),Y(0))=(j,0)\right]=j\mu \mathcal C(t),\qquad t \ge 0$$ with
			$$
			\begin{aligned}
				\mathcal C(t):=\displaystyle\begin{cases}
					\displaystyle e^{(\lambda-\mu)t}\left( \frac{e^{(\lambda-\mu)t}(\lambda+\mu)- (\lambda+\mu+2\lambda(\lambda-\mu))t}{(\lambda-\mu)^2}\right),\qquad &\lambda\neq\mu\\
					\mu t-1,\qquad &\lambda=\mu,
				\end{cases}
			\end{aligned}
			$$
			and the conditional correlation is given by
			$$
			\begin{aligned}
				Corr(t;j)&:=\mathsf{Corr}_j((X(t),Y(t)))=\mathsf{Corr}\left[(X(t), Y(t))|(X(0),Y(0))=(j,0)\right]\\
				&=\begin{cases}
					\displaystyle \left[(\lambda+\mu)\left(e^{(\lambda-\mu)t}-1\right)-2(\lambda-\mu)t\right]\sqrt{\frac{\mu e^{(\lambda-\mu)t}}{(\lambda+\mu)(\alpha_1(t)+\alpha_2(t))}},\quad &\lambda\neq\mu\\
					\displaystyle \frac{\sqrt{3} (\mu t -1)}{\sqrt{6-6\mu t +4\mu^2t^2}},\qquad &\lambda=\mu,
				\end{cases}
			\end{aligned}
			$$
			with
			$$
			\alpha_1(t):=\lambda(\lambda+\mu)+e^{3(\lambda-\mu)t}\mu(\lambda+\mu)+e^{2(\lambda-\mu)t}(\lambda^2-\lambda\mu-2\mu^2+4\lambda\mu(\mu-\lambda)t),\qquad t\ge 0
			$$
			and 
			$$
			\alpha_2(t):=e^{(\lambda-\mu)t}\left(\mu^2+\lambda^2(4\mu t-2)-\lambda\mu(1+4\mu t)\right),\qquad t\ge 0.
			$$
			Note that in this case $Corr(t;j)$ does not depend on $j$.
			Moreover, the following limits hold
			$$
			\begin{aligned}
				&\lim_{t\to+\infty}Cov(t;j)=\begin{cases}
					0,\qquad &\lambda<\mu\\
					+\infty,\qquad &\lambda\ge \mu,
				\end{cases}\qquad
				&\lim_{t\to+\infty} Corr(t;j)=\begin{cases}
					0,\qquad &\lambda<\mu\\
					\frac{\sqrt{3}}{3}\qquad &\lambda=\mu\\
					1,\qquad &\lambda>\mu,
				\end{cases}\\
				&\lim_{t\to 0}Cov(t;j)=0,\qquad &\lim_{t\to 0}Corr(t;j)=-\sqrt{\frac{\mu}{\lambda+\mu}}.
			\end{aligned}
			$$
			Note that, for $t\to 0$, $Cov(t;j)\to 0$ whereas $Corr(t;j)\to -\sqrt{\frac{\mu}{\lambda+\mu}}$ since $Var_X(t;j)\to 0$ and $Var_Y(t;j)\to 0$. Hence, at the beginning the processes $X(t)$ and $Y(t)$ are negatively correlated.
			In Figure \ref{fig:Figure6}, some plots of the conditional covariance $Cov(t;j)$ and of the conditional correlation coefficient $Corr(t;j)$ are provided. {Note that, in any case, the processes $X(t)$ and $Y(t)$ are negatively correlated for small $t$, and are positively correlated or uncorrelated for large $t$. More in detail, when $t\to+\infty$ the processes are positively correlated if $\lambda\ge \mu$, and uncorrelated otherwise.
				These results can also be justified from a modeling point of view. At the beginning of the spread of fake news, it is reasonable to believe that all individuals in the population are gullible and are therefore inclined to spread the news. On the contrary, when the rumor has spread sufficiently, individuals tend to be skeptical and lose interest in spreading the fake news. 
			Regarding the covariance, we note that at the beginning it tends to 0 and when $t\to+\infty$ and $\lambda<\mu$, the covariance tends to 0, otherwise it tends to $+\infty$. Furthermore, we notice that
			when $\mu\le 1/2$ or when $\mu>1/2, \lambda<\mu(2\mu+1)/(2\mu-1)$ it results that $\left.\frac{\mathrm d}{\mathrm dt}Cov(t;j)\right|_{t\to 0}<0$, otherwise $\left.\frac{\mathrm d}{\mathrm dt}Cov(t;j)\right|_{t\to 0}>0$. Hence, when $\mu\le 1/2$ or when $\mu>1/2, \lambda<\mu(2\mu+1)/(2\mu-1)$, the covariance between the processes is negative in a time interval close to 0.\\
		}
			\begin{figure}[t]
				\centering
				\subfigure[]{\includegraphics[scale=0.5]{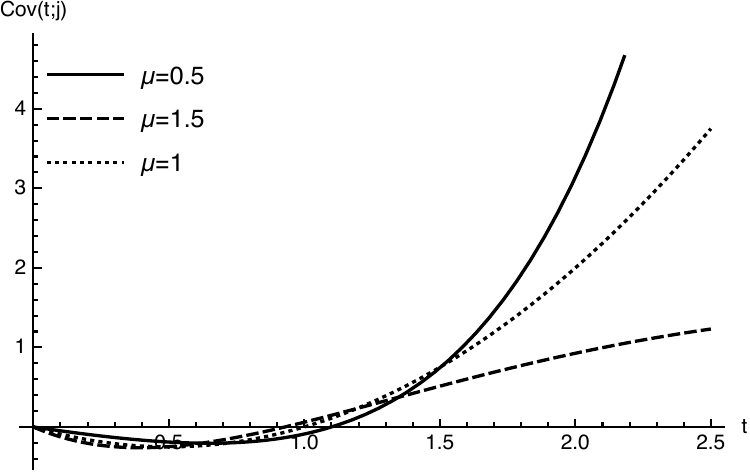}}\qquad
				\subfigure[]{\includegraphics[scale=0.5]{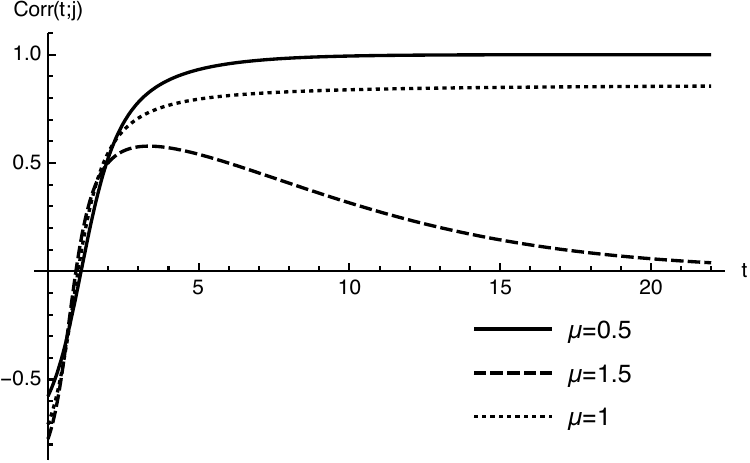}}
				\caption{The conditional covariance  $Cov(t;j)$ and the conditional correlation coefficient $Corr(t;j)$ for $j=1$ and $\lambda=1$.}
				\label{fig:Figure6}
			\end{figure}
			The coefficient $r(t;j)$, introduced in Definition \ref{coeffr} has the following expression
			\begin{equation}\label{rCostanti}
			r(t;j)=\begin{cases}
				\displaystyle1+\frac{1}{j}\left[\frac{\lambda+\mu}{\lambda-\mu}-\frac{2\lambda t }{e^{(\lambda-\mu)t}-1}\right],\qquad &\lambda\neq \mu\\
				\displaystyle 1+\frac{\mu t -1}{j},\qquad &\lambda=\mu,
			\end{cases}
			\end{equation}
			for any $t\ge 0$.
			Some plots of the ratio $r(t;j)$ are given in Figure \ref{fig:Figure7}.
			\begin{figure}[t]
				\centering
				\subfigure[]{\includegraphics[scale=0.5]{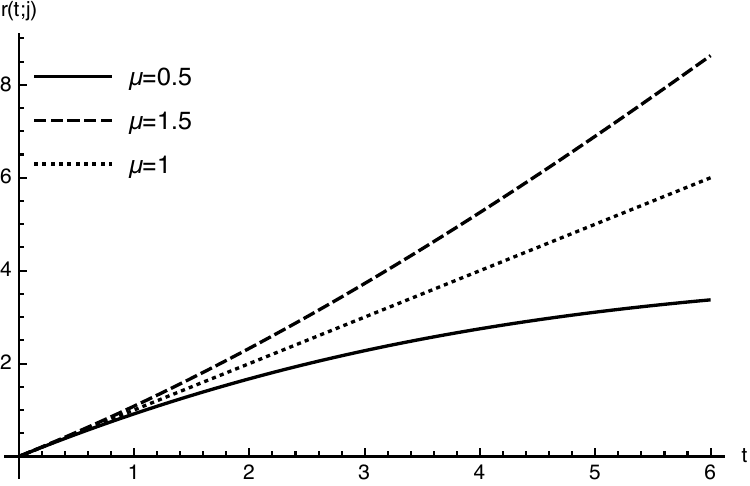}}\qquad
				\subfigure[]{\includegraphics[scale=0.5]{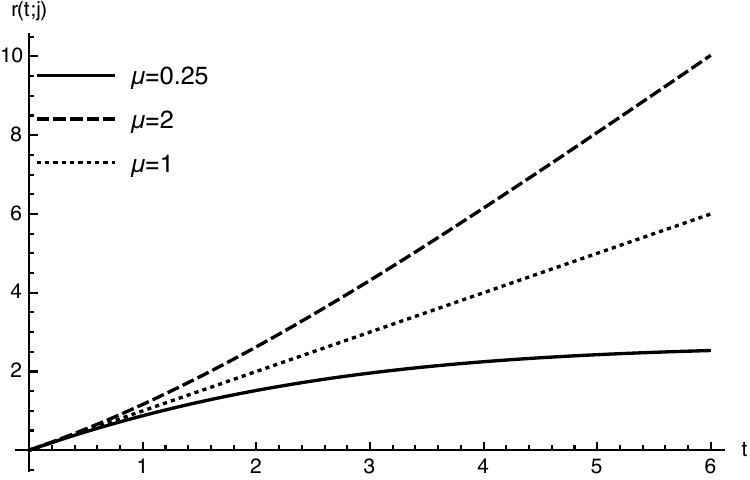}}
				\caption{The ratio $r(t;j)$ for $j=1$ and $\lambda=1$.}
				\label{fig:Figure7}
			\end{figure}
			Moreover, the following limits hold
			$$
			\begin{aligned}
				&\lim_{t\to+\infty}r(t;j)=\begin{cases}
					\displaystyle 1+\frac{\lambda+\mu}{j(\lambda-\mu)},\qquad &\lambda>\mu\\
					+\infty,\qquad &\lambda\le \mu,
				\end{cases}\qquad 
				&\lim_{t\to 0} r(t;j)=1-\frac{1}{j},\qquad \lim_{j\to +\infty} r(t;j)=1.
			\end{aligned}
			$$
			Note that 
			$$
			r(t;1)=2\lambda\left(\frac{1}{\lambda-\mu}-\frac{t}{e^{(\lambda-\mu)t}-1}\right),\qquad t\ge 0.
			$$

			We note that initially the processes $X(t)$ and $Y(t)$ are negatively correlated, for $j>1$ since $r(t;j)$ for $t\to 0$ is smaller than 1. Moreover, since $\lim_{t\to 0} r(t;j)\le1<\lim_{t\to+\infty} r(t;j)$ and the function $f(t):=-\frac{\lambda+\mu}{\lambda-\mu}+\frac{2\lambda t}{e^{(\lambda-\mu)t}-1}$ is decreasing for any $t\ge 0$, owing to \eqref{rCostanti}, there exists a single time instant $t^*$ in which  $r(t^*;j)=1$ or, equivalently, $Corr(t^*;j)=0$.
			We remark that the correlation of $X(t)$ and $Y(t)$ does not depend on $j$, as we have already pointed out, whereas the index $r(t;j)$ depens on $j$ and for great values of $j$ it is near 1.
			
	\section{Proportional intensities}\label{Section9}
	In this section, we consider the case of proportional and time-dependent spreader and inactivity rates, i.e.\ $$\lambda(t)=\rho\mu(t),\qquad \rho>0,\quad t\ge 0.$$
	In the context related to the growth of biological populations, the assumption that the birth and death rates are linked by a proportionality coefficient $\rho>0$ reflects an interaction between population growth and the regulation of its dynamics. A self-regulation system of this kind can also find an analogue in the context of spreading of fake news. In particular, there are mechanisms and processes through which the dissemination of fake news can be self-regulated over time through, for instance, information filtering by recipients (i.e., the recipient's ability to discern between accurate and fake news), technological interventions (such as the implementation of algorithms and technological tools to identify and limit the spread of fake news), education, and awareness (the greater the public's awareness of the risks and effects of fake news, the greater their propensity to exclusively disseminate accurate news). 
	\par
	Also in this case, it is possible to use the method of characteristics. The corresponding system of ordinary differential equations is given by \eqref{PDEsG-const}, with $P(z_1)=-\mu(t)\left[z_1^2\rho-z_1(\rho+1)+z_2\right]$.	
	With a reasoning similar to that of Section 5, we get the following expression for the p.g.f.
$$
G_j(z_1,z_2,t)=\begin{cases}
	\displaystyle \left(\frac{\xi_2(z_1-\xi_1)-\xi_1(z_1-\xi_2)\exp\left((\xi_2-\xi_1)M(t)\right)}{z_1-\xi_1-(z_1-\xi_2)\exp\left((\xi_2-\xi_1)M(t)\right)}\right)^j,\; &\rho\neq 1,\\
	\displaystyle \left(\frac{z_1\left(e^{2M(t) \sqrt{1-z_2}}(1-\sqrt{1-z_2})-1-\sqrt{1-z_2}\right)+z_2\left(1-e^{2M(t) \sqrt{1-z_2}}\right)}{1-z_1-e^{2M(t) \sqrt{1-z_2}}(1-z_1+\sqrt{1-z_2})-\sqrt{1-z_2}}\right)^j,\; &\rho= 1,
\end{cases}
$$
where 
\begin{equation} \label{intmu}
	M(t):=\int_0^t\mu(\tau)\mathrm{d}\tau,\qquad t\ge 0,
\end{equation}
represents the cumulative inactivity intensity, $|z_1|\le1$, $|z_2|\le1$, $t\ge 0$, and
$$
\xi_{1,2}:=\xi_{1,2}(z_2)=\frac{\rho+1\mp\sqrt{(\rho+1)^2-4z_2\rho}}{2\rho},\qquad \xi_1<\xi_2.
$$
Note that $G_j(1,1,t)=1$, as expected. Also in this case, as in Section 5, it is hard to obtain the probabilities $p_{n,k}(t)$ from $G_j(z_1,z_2,t)$.
Recalling Eq.\ \eqref{Pa}, in this case we get
\begin{equation}\label{probPAprop}
	p_{A_j}(t)=\begin{cases}
		\displaystyle \left(\frac{e^{\frac{\rho-1}{\rho}M(t)}-1}{\rho e^{\frac{\rho-1}{\rho}M(t)}-1}\right)^j,\qquad &\rho\neq 1\\
		\displaystyle \left(\frac{M(t)}{1+M(t)}\right)^j,\qquad &\rho=1.
	\end{cases}
\end{equation}
Note that the probabilities $p_{A_j}(t)$ given in Eq.\ \eqref{probPAcost} and in Eq.\ \eqref{probPAprop} coincide with the probabilities of extinction of a birth-death process given, for example, in Eq.\ (4.41) and Eq.\ (7.42a) of \cite{Ricciardi}.

Note that, since $M(t)$ is an increasing and positive function, $\lim_{t\to +\infty}M(t)\in(0,+\infty]$. Hence, when $\lim_{t\to +\infty}M(t)=M\in\mathbb R_+$, we have
$$
\lim_{t\to +\infty}p_{A_j}(t)=\begin{cases}
	\displaystyle \left(\frac{e^{\frac{\rho-1}{\rho}M}-1}{\rho e^{\frac{\rho-1}{\rho}M}-1}\right)^j,\qquad &\rho\neq 1\\
	\displaystyle \left(\frac{M}{1+M}\right)^j,\qquad &\rho=1.
\end{cases}
$$
Otherwise, when $\lim_{t\to +\infty}M(t)=+\infty$, we get
$$
\lim_{t\to+\infty}p_{A_j}(t)=\begin{cases}
	\displaystyle \left(\frac{1}{\rho}\right)^j, \qquad &\rho>1\\
	1,\qquad &0<\rho\le 1.
\end{cases}.
$$
Note that when $\lim_{t\to +\infty}M(t)=+\infty$, with a reasoning similar to that used in Section 5, it is possible to show that
$$
\lim_{t\to +\infty}p_{0,k}(t)=\left(1+\frac{1}{\rho}\right)^j\left(\frac{\rho}{(1+\rho)^2}\right)^k\left[\binom{2k-j-1}{k-1}-\binom{2k-j-1}{k}\right].
$$
Also in this case, $\lim_{t\to +\infty}\sum_{k=j}^{+\infty}p_{0,k}(t)=\lim_{t\to+\infty}p_{A_j}(t)$ as expected.
\begin{example}
	The probability $p_{0,1}(t)$, i.e. the probability that the process $(X(t), Y(t))$ starting from $(0,1)$ is then absorbed in $(1,0)$ by time $t$, is given by
	$$
	p_{0,1}(t)=\left.\frac{\partial}{\partial z_2}G_j(z_1,z_2,t)\right|_{z_1,z_2\to 0}=\begin{cases}
		\displaystyle \frac{1-e^{-\frac{1+\rho}{\rho}M(t)}}{1+\rho},\qquad &\rho\neq 1\\
		\displaystyle\frac{1-e^{-2M(t)}}{2},\qquad &\rho=1,
	\end{cases}
	$$
	where $M(t)$ is defined in \eqref{intmu}.
\end{example}
\subsection{The conditional moments of $X(t)$}
Under the assumptions $\lambda(t)=\rho\mu(t)$ with $\rho>0$, the conditional mean and the conditional variance of $X(t)$ are given by
$$
m_X(t;j)=
\displaystyle j e^{(\rho-1)M(t)},\qquad t\ge 0
$$
and
\begin{equation}\label{VarXprop}
	Var_X(t;j)=
	\begin{cases} 
		\displaystyle \frac{j(\rho+1)}{\rho-1}e^{(\rho-1)M(t)}\cdot\left[e^{(\rho-1)M(t)}-1\right],\qquad &\rho\neq 1\\
		\displaystyle 2jM(t),\qquad &\rho=1,
	\end{cases}
\end{equation}
for any $t\ge 0$. Moreover, the Fano factor is given by
$$
D_X(t;j)=
\begin{cases}
	\displaystyle \frac{\rho+1}{\rho-1}\left[e^{(\rho-1)M(t)}-1\right],\qquad &\rho\neq 1\\
	\displaystyle 2M(t),\qquad &\rho=1,
\end{cases}
$$
and the coefficient of variation by
$$
\sigma_X(t;j)=
\begin{cases}
	\displaystyle \frac{e^{-\rho/2M(t)}}{j}\sqrt{\frac{(\rho+1)j}{\rho-1}\left[e^{\rho M(t)}-e^{M(t)}\right]},\qquad &\rho\neq 1\\
	\displaystyle \sqrt{\frac{2M(t)}{j}},\qquad &\rho=1.
\end{cases}
$$
In Figure \ref{fig:Figure2} we provide some plots of $m_X(t;j)$ and $Var_X(t;j)$ by considering $\mu(t)=\mu+\alpha\cos\left(\frac{2\pi}{Q}t\right)$ with $Q>0$, $t\ge 0$, $\mu>|\alpha|>0$. Such choice may represent the periodical fluctuation of the inactivity rate owing to, for example, periodic denies of the rumor.
\begin{figure}[t]
	\centering
	\subfigure[]{\includegraphics[scale=0.5]{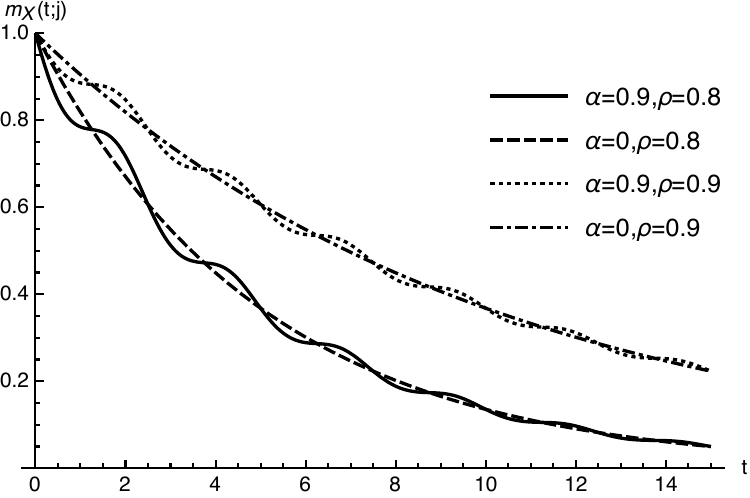}}\qquad
	\subfigure[]{\includegraphics[scale=0.5]{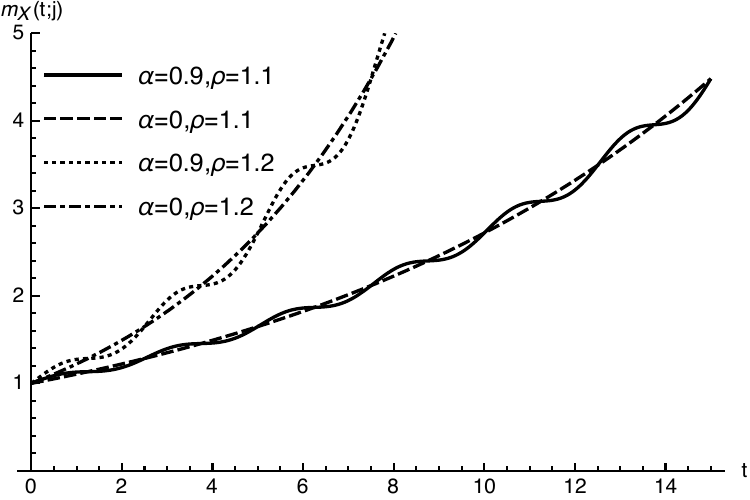}}\\
	\subfigure[]{\includegraphics[scale=0.5]{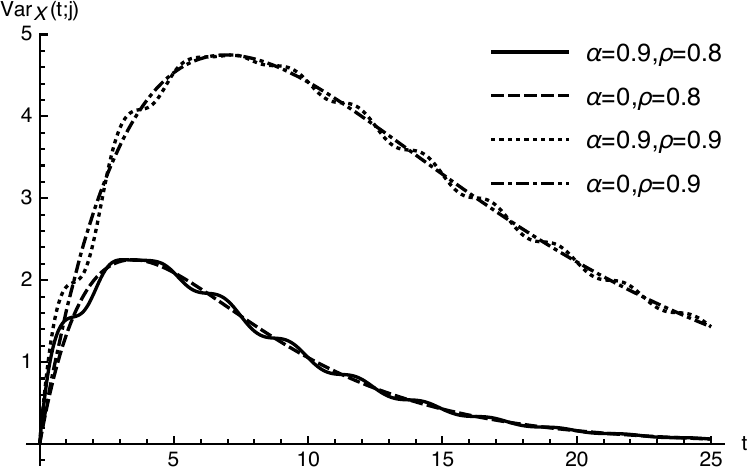}}\qquad
	\subfigure[]{\includegraphics[scale=0.5]{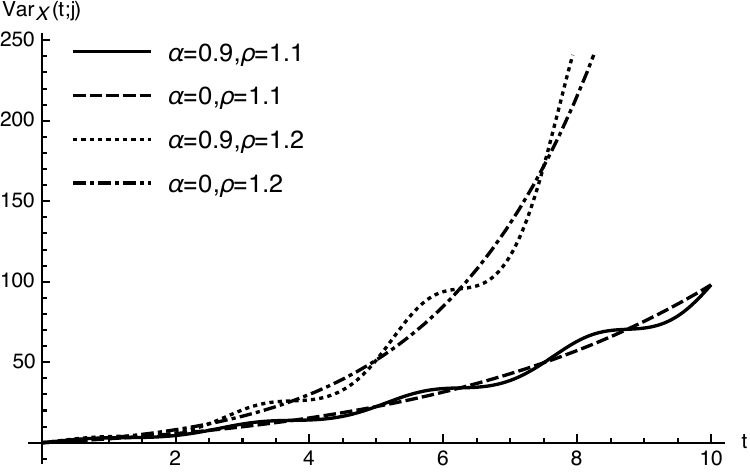}}
	\caption{The conditional mean $m_X(t;j)$ and the conditional variance $Var_X(t;j)$ for $\lambda=\rho\mu(t)$ and $\mu(t)=\mu+\alpha\cos\left(\frac{2\pi}{Q}t\right)$ with $\mu=j=1$ and $Q=2.5$.}
	\label{fig:Figure2}
\end{figure}
Note that, considering that $\lim_{t\to +\infty}M(t)\in[0,+\infty]$, when $\lim_{t\to+\infty} M(t)=M\in\mathbb R$ then $m_X(t;j)$, $Var_X(t;j)$, $D_X(t;j)$ and $\sigma_X(t;j)$ converge, otherwise when $\lim_{t\to+\infty}M(t)=+\infty$ the following limits hold
$$
\begin{aligned}
	&\lim_{t\to+\infty} m_X(t;j)=\begin{cases}
		0,\qquad &\rho<1\\
		j,\qquad &\rho=1\\
		+\infty,\qquad &\rho>1
	\end{cases},
\qquad
\lim_{t\to+\infty}Var_X(t;j)=\begin{cases}
	0,\qquad &\rho<1\\
	+\infty,\qquad &\rho\ge 1
\end{cases},\\
&\lim_{t\to+\infty}D_X(t;j)=\begin{cases}
	\frac{\rho+1}{1-\rho},\qquad &\rho<1\\
	+\infty\qquad &\rho\ge 1
\end{cases},
\qquad 
\lim_{t\to+\infty}\sigma_X(t;j)=\begin{cases}
+\infty,\qquad &\rho\le1\\
	\sqrt{\frac{\rho+1}{j(\rho-1)}},\qquad  &\rho> 1.
\end{cases}
\end{aligned}
$$
\subsection{The conditional moments of $Y(t)$}
In the case of proportional intensities, the conditional expected value of $Y(t)$ is given by
\begin{equation}\label{mYprop}
	m_Y(t;j)=
	\begin{cases}
		\displaystyle	\frac{j}{\rho-1}\left(e^{(\rho-1)M(t)}-1\right),\qquad &\rho\neq 1\\
		\displaystyle	jM(t),\qquad &\rho=1,
	\end{cases}
\end{equation}
where $M(t)$ is defined in Eq.\ \eqref{intmu} for any $t\ge 0$. Considering that the sign of $\mu(t)$ is always positive, the conditional expected value $m_Y(t;j)$ is increasing. The concavity of $m_Y(t;j)$ depends on the sign of $\rho-1$ and on the monotonicity of $\mu(t)$. Indeed, since
$$
\frac{d^2}{dt^2}m_Y(t;j)=j e^{(\rho-1)M(t)}
\left(\frac{d}{dt}\mu(t)+\mu^2(t)(\rho-1)\right), \qquad t\ge 0,
$$
then the concavity of $m_Y(t;j)$ is upward when $\mu(t)$ is an increasing function and $\rho>1$, whereas it is downward when $\mu(t)$ is a decreasing function and $0<\rho<1$.
In Table \ref{tab:Tabella2} the sign of the difference between $m_X(t;j)$ and $m_Y(t;j)$ is analyzed. Note that when the spreader intensity $\lambda(t)$ is at least equal to the double of the inactivity intensity $\mu(t)$, then $m_X(t;j)$ is greater than $m_Y(t;j)$ for any $t\ge 0$, similarly to the result of Section 5.
\begin{table}[t]
	\caption{The sign of the difference between the conditional means for different range of values of $\rho$.}
	\label{tab:Tabella2}
	\centering
	\begin{tabular}{l|l} 
		&                                                                                                                                                                                                                                                                           \\ 
		\hline
		$0<\rho<1$                   & $\displaystyle m_X(t;j)>m_Y(t;j) \iff M(t)>\frac{1}{\rho-1}\log\left(\frac{1}{2-\rho}\right)$  \\
		$\rho=1$                   	& $\displaystyle m_X(t;j)>m_Y(t;j)\iff M(t)<1$                                                                                                                                                                                       \\
		$1<\rho<2$ 			  & $\displaystyle m_X(t;j)>m_Y(t;j)\iff M(t)<\frac{1}{\rho-1}\log\left(\frac{1}{2-\rho}\right)$  \\
		$\rho\ge2$ 				& $\displaystyle m_X(t;j)>m_Y(t;j)\qquad \forall t\ge 0$.                                                                                                                                                                                
	\end{tabular}
\end{table}

The conditional second order moment of $Y(t)$ is given by
\begin{equation}\label{m2Yprop}
	\begin{aligned}
		m_{2,Y}(t;j)=
		\begin{cases}
			\displaystyle	\frac{j(\rho+1-2j)}{(\rho-1)^2}\left(e^{(\rho-1)M(t)}-1\right)-\frac{4\rho j }{(\rho-1)^2}M(t)\\
			\displaystyle	\times e^{(\rho-1)M(t)}+\frac{j(1+j(\rho-1)+\rho)}{(\rho-1)^3}\left(e^{2(\rho-1)M(t)}-1\right) , &\rho\neq 1\\
			\displaystyle\frac{j}{3}M(t) \left(3+3(j-1)M(t)+ 2M^2(t)\right),\qquad &\rho=1,
		\end{cases}
	\end{aligned}
\end{equation}
where $M(t)$ is defined in Eq.\ \eqref{intmu} for any $t\ge 0$. The expressions of the variance and of the other indexes are also available in closed form but they are omitted for brevity.
We remark that, if $\lim_{t\to+\infty}M(t)=M\in\mathbb R_+$, then $m_Y(t;j)$, $Var_Y(t;j)$, $D_Y(t;j)$ and $\sigma_Y(t;j)$ are finite. Otherwise, if $\lim_{t\to+\infty}M(t)=+\infty$, then the following limits hold
\begin{equation*}
	\lim_{t\to+\infty} m_Y(t;j)=\begin{cases}
		\frac{j}{1-\rho},\quad &\rho<1\\
		+\infty, \quad &\rho\ge 1
	\end{cases},
\quad \lim_{t\to+\infty}m_{2,Y}(t;j)=\begin{cases}
\displaystyle	\frac{j(j(\rho-1)+(3-5\rho)\rho)}{(\rho-1)^3},\quad &\rho<1\\
	+\infty,\quad &\rho\ge 1.
\end{cases}
\end{equation*}

\subsection{Correlation between $X(t)$ and $Y(t)$}
The conditional expected value of $X(t)\cdot Y(t)$ is given by
$$
\begin{aligned}
	&m_{X\cdot Y}(t;j)=\begin{cases}
		\displaystyle je^{(\rho-1)M(t)}\left[\frac{2\rho}{1-\rho}M(t)+\left(j-1-\frac{2\rho}{1-\rho}\right)\frac{e^{(\rho-1)M(t)}-1}{\rho-1}\right],\quad &\rho\neq 1\\
		\displaystyle jM(t)\left(j-1+M(t)\right),\quad &\rho=1,
	\end{cases}
\end{aligned}
$$
where $M(t)$ is defined in Eq.\ \eqref{intmu} for any $t>0$. {In this case, the study regarding the covariance $Cov(t;j)$ and the correlation $Corr(t;j)$ is omitted since such indexes possess cumbersome expressions. 
Hence, we focus on the determination and analysis of the adimensional correlation index $r(t;j)$.} 
Recalling Definition \ref{coeffr}, the coefficient $r(t;j)$ is given by
$$
r(t;j)=\begin{cases}
	\displaystyle 
	1+\frac{1}{j}+\frac{2}{\rho-1}-\frac{2M(t)\rho}{j(e^{(\rho-1)M(t)}-1)},\qquad &\rho\neq1\vspace{0.3cm}\\
	\displaystyle 1+\frac{M(t)-1}{j},\qquad &\rho=1,
\end{cases}
$$
where $M(t)$ is defined in Eq.\ \eqref{intmu} for any $t\ge 0$. In this case $\lim_{j\to+\infty}r(t;j)=1$, meaning that when the number of initial spreaders is high, then the processes $X(t)$ and $Y(t)$ are uncorrelated.
We note that, when $\lim_{t\to+\infty} M(t)=M\in\mathbb R$, both the mean $m_{X\cdot Y}(t)$ and the adimensional correlation index $r(t;j)$ are finite. Otherwise, when $\lim_{t\to+\infty} M(t)=+\infty$, the following limits hold
$$
\lim_{t\to+\infty} m_{X\cdot Y}(t;j)=\begin{cases}
	0, \qquad &\rho<1\\
	+\infty,\qquad &\rho\ge 1,
\end{cases}
\lim_{t\to+\infty}r(t;j)=\begin{cases}
	+\infty,\qquad &\rho\le 1\\
	1+\frac1j+\frac{2}{\rho-1},\qquad &\rho>1.
\end{cases}
$$
Moreover, if $0<\rho\le 1$ then $r(0,j)<1$, otherwise $r(0,j)>1$, meaning that if $0<\rho\le 1$, the processes are negatively correlated at the beginning of the time-evolution. This result can be interpreted from a modeling point of view as follows. When spreaders grow more slowly than inactives (i.e. $0<\rho\le 1$), then all individuals behave as skeptical from the beginning  and they therefore do not intend to spread the news. Similarly, when spreaders grow faster than inactive ones (i.e. $\rho>1$), then the population initially driven to spread the news being influenced by the rapid growth of the number of spreaders. 
Some specific cases are discussed in the following section. 
	\section{Special cases of the conditional means}\label{Sec:special-cases}
	In this section, we analyze some special situations in which the spreader and the inactivity intensities are proportional, i.e. $\lambda(t)=\rho\mu(t)$, $\rho>0$ (as in Section 6) and the conditional means $m_X(t;j)$ and $m_Y(t;j)$ are set equal to some very well-known growth curves. 
	Considering the application to the context of fake news, it is expected a non-explosive behavior from the category of spreaders and consequently from the inactives. Indeed, it usually happens that after an initial period of explosion in the spread of a fake news, it gradually tends to disappear, thus reaching only a limited portion of the population. Therefore, it is plausible to expect an average trend for spreaders and inactives of sigmoidal type. 
	This assumption is also appropriate for taking into account that the members of the population are generally not particularly inclined to follow fake news irrationally, although (as detailed in Section \ref{Section2}) the total size $N$ of the population is seen as very large.	
	In addition, it is worth mentioning that even if the present model can be applied, in general, not only for the diffusion of fake news but for any kind of news, 
the specific assumption that the conditional means of the components follow a sigmoidal pattern is appropriate specifically in the context of the dissemination of fake news which, by their nature, are not intended to achieve widespread diffusion in the population. 
	Hence, it seems reasonable to give attention to the cases when the means $m_X(t;j)$ and $m_Y(t;j)$ are equal to specific sigmoidal growth functions, which are often used to model real growth phenomena (for a deep study of sigmoidal models see, for instance, Albano {\em et al}.\ \cite{Albanoetal}). Regarding the process $Y(t)$, it is known that $Y(0)=0$, hence we investigate the conditions under which $m_Y(t;j)$ is equal to sigmoidal growth curves starting from $0$. {It seems reasonable to assume that the initial size of the inactive individuals is equal to 0, since at the beginning of the diffusion all the individuals believe in the rumor and have the intention of spreading it. Likewise, we assume that the initial size of spreaders is positive (i.e. $X(0)=j>0$); otherwise, the rumor diffusion would not ensue.}
\par
	 In particular, for the process $X(t)$ we consider:
	 \begin{itemize}
	 	\item[(i)] The classical Gompertz growth curve 	(cf.\ Rom\'an-Rom\'an \textit{et al.}\ \cite{Romaneral2019}), which has been used in several contexts during recent years, especially related to tumor growth (see for example Rom\'an-Rom\'an \textit{et al.}\ \cite{Romanetal2021}).
	 	\item[(ii)] The generalized Gompertz growth curve introduced in Asadi \textit{et al.}\ \cite{DiCrescenzoetal2023}, which is related to the generalized Pareto distribution. In particular, such growth function can have different kinds of behaviour (it can be sigmoidal, increasing with infinite limit value, bell-shaped) depending on the values of the parameters. For our interest, we consider only the case in which the generalized Gompertz curve has a sigmoidal shape.
	 	\item[(iii)] The classical logistic growth curve, which is one of the most popular growth models characterized by a finite carrying capacity. It was originally derived by Verhulst to describe the self-limiting growth of a biological population and during the years it has been deeply analyzed (see, for example, Di Crescenzo and Paraggio \cite{DiCrescenzoParaggio2019}).
	 	\item[(iv)] The extended logistic growth curve (cf.\ Di Crescenzo \textit{et al.} \cite{DiCrescenzoetal2023a} and San MartÃ¬n \cite{SanMartin2020}), which has been proposed to model the rumor spread among restricted populations.
	 	\item[(v)] The multisigmoidal logistic growth curve (studied in Di Crescenzo \textit{et al.} \ \cite{DiCrescenzoetal2021} and \cite{DiCrescenzoetal2022a}),  which exhibits multiple inflections.
	 	\item[(vi)] The modified Korf growth curve (analyzed in Di Crescenzo and Spina \cite{DiCrescenzoSpina2016}), that captures certain features of both the Gompertz and Korf growth laws.
	 \end{itemize} 
 Whereas, for the process $Y(t)$ we consider:
 \begin{itemize}
 	\item[(I)] The classical Korf model (cf.\ Eq.\ (9) of Di Crescenzo and Spina \cite{DiCrescenzoSpina2016}), which starts from the state $0$  at time $0$, then evolving as a sigmoidal function.	
 	\item[(II)] The Mitscherlich model (cf.\ Eq.\ (2) of Ware \textit{et al}.\ \cite{Wareetal1982}) which starts from the state $0$ at time $0$ and has a finite limit. Such growth curve has been proposed to model the plant growth and to determine critical nutrient deficiency levels.
 \end{itemize} 
 Note that all the aforementioned curves are S-shaped, namely they start with an explosion of exponential type, then an inflection (or more) occurs and finally they flattens up to a constant value, known as carrying capacity. 

	Table \ref{tab:Tabella7} summarizes the most relevant information considering  cases (i)$\div$(vi), such as the expression of the conditional mean $m_X(t;j)$, which corresponds to the expression of the corresponding growth curve, the expression of the cumulative inactivity intensity $M(t)$, the time intervals in which the sign of the difference $m_X(t;j)-m_Y(t;j)$ is positive, the sign of the correlation coefficient $Corr(t;j)$, the Fano factor, the time intervals of overdispersion and the coefficient of variation for both the conditional processes $X(t)$ and $Y(t)$ given the initial state $(X(0), Y(0))=(j,0)$. {To shorten the formulas contained in Table \ref{tab:Tabella7}, for case (iv), we set}
	$$
	m_X^{(el)}(t;j):=\frac{(\varepsilon-1)N(N-j)+e^{(1+\varepsilon)t}N(2\varepsilon j+N-\varepsilon N)}{2\varepsilon (N-j)+e^{(1+\varepsilon)t}(2\varepsilon j+N-\varepsilon N)},\qquad t\ge 0,
	$$
	and
	$$
	M^{(el)}(t):=\frac{1}{\rho-1}\log\left(\frac{(\varepsilon-1)N(N-j)+e^{(1+\varepsilon)t}N(N(\varepsilon-1)-2\varepsilon j)}{2\varepsilon j (N-j)+e^{(1+\varepsilon)t}j (2\varepsilon j +N-\varepsilon N)}\right),\qquad t\ge 0.
	$$
		
	\begin{sidewaystable}[]
		\caption{Some relevant characteristic of the conditional processes $X(t)$ and $Y(t)$ given the initial state $(X(0), Y(0))=(j,0)$ by considering different growth models for $m_X(t;j)$. Note that $\alpha,\beta,A,b,C,j>0$, $\rho>1$. For the logistic case one has $0<j<C$, for the multisigmoidal logistic $Q_\beta(t):=\sum_{k=1}^n \beta_kt^k$, with $\beta_n<0$.}
		\tiny
		\centering 
		\begin{tabular}{l|llll}
			growth curve            & $m_X(t;j)$                                                                                                                                                       & $M(t)$                                                   & $\sgn (m_X(t;j)-m_Y(t;j))=+1,\;t\ge \widetilde t$                                                                                                                       & $\sgn (Corr(t;j))$  \\     \hline
			(i) Gompertz                & $j\exp\left(\alpha(1-e^{-\beta t})\right)$                                                                                                                        & $\frac{\alpha\left(1-e^{-\beta t}\right)}{\rho-1}$                   & $\widetilde t=\begin{cases} -\frac{1}{\beta}\left(1-\frac 1 \alpha\log(\rho-2)\right), &\text{ if } 1<\rho<2\\ 0, &\text{ otherwise}\end{cases}$           & $+1,t\ge 0$     \\ \hline
			(ii) generalized Gompertz    & $j\exp\left(Ab\left(1-\frac{b}{t+b}\right)\right)$                                                                                                                      & $\frac{Abt}{(\rho-1)(t+b)}$            & $\widetilde t=\begin{cases} 0,&\text{ if } \rho>2\\ +\infty &\text{ otherwise} \end{cases}$                                                                                                                                   & changing                               \\ \hline
			(iii) logistic                & $\frac{Cj}{j+(C-j)e^{-rt}}$                                                                                                                                      & $\frac{1}{\rho-1}\left({rt+\log\frac{C}{C+j(e^{rt}-1)}}\right)$          & $\widetilde t =\begin{cases} \frac 1r\log\left(\frac{C-j}{C(2-\rho)-j}\right),&\text{ if } 1<\rho<2, C>\frac{j}{2-\rho},\\ 0,&\text{ otherwise}\end{cases}$ & changing          \\ \hline
			(iv) extended logistic       & $m_X^{(el)}(t;j)$ & $M^{(el)}(t)$                                                                  & NA in closed form                                                                                                                                                     & changing       \\ \hline
			(v) multisigmoidal logistic & $\frac{Cj}{j+(C-j)e^{Q_\beta(t)}}$                                                                                                                               & $\frac{1}{\rho-1}\left({Q_\beta(t)+\log\frac{C}{C+j(e^{Q_\beta(t)}-1)}}\right)$  & NA in closed form                                                                                                                                                     & changing    \\ \hline
			(vi) modified Korf           & $j\exp\left[\frac\alpha\beta\left(1-(1+t)^{-\beta}\right)\right]$                                                                                                & $\frac{A\left(1-(1+t)^{-b}\right)}{b(\rho-1)}$                & $\widetilde t=\begin{cases} e^{-1/\beta}\left(1+\frac\beta\alpha\log(2-\rho)\right), &\text{ if }  \alpha+\beta\log(2-\rho)>0 \\ 0, &\text{ otherwise}\end{cases}$      & changing                       
		\end{tabular}\\
		
		\vspace{0.5cm}
		
		\begin{tabular}{l|llll}
			growth curve                             &process & \multicolumn{1}{c}{Fano factor $D_Z(t;j)$} & \multicolumn{1}{c}{underdispersion conditions} & \multicolumn{1}{c}{coefficient of variation $\sigma_Z(t;j)$} \\ \hline
			\multirow{2}{*}{(i) Gompertz}                & $X$                                     &$\frac{\rho+1}{\rho-1}\left(\exp\left(\alpha-\alpha e^{-\beta t}\right)-1\right)$               &         $t\ge \widetilde t= \begin{cases} \frac1\beta\log\frac{\alpha}{\alpha-\log2+\log{1+\rho}} &\text{ if } \alpha>\log\frac{2}{1+\rho}\\
				0 &\text{ otherwise }\end{cases}$                                            &   $\frac{e^{-\alpha/2}}{j}\sqrt{\frac{j(\rho+1)}{\rho-1}\left(e^{\alpha}-\exp\left(\alpha e^{-\beta t}\right)\right)}$                                                           \\
			& $Y$ &        omitted                                  &     NA in closed form                                                         &  omitted                                                              \\ \hline
			\multirow{2}{*}{(ii) generalized Gompertz}    & $X$ &       $\frac{\rho+1}{\rho-1}\left(\exp\frac{Abt}{t+b}-1\right)$                                     &       $t\le \widetilde t=\begin{cases}\frac{b\log\frac{2\rho}{\rho+1}}{\log\frac{2\rho}{\rho+1}-Ab},\text{ if } Ab-\log\frac{2\rho}{\rho+1}<0\\
				+\infty\text{ otherwise }\end{cases}$                                                        		
			&  $ \sqrt{\frac{j(\rho+1)}{\rho-1}\left(1-\exp\frac{Abt}{b+t}\right)}$                                 \\
			& $Y$ &         omitted                                   &      NA  in closed form                                                      &                            omitted                                  \\ \hline
			\multirow{2}{*}{(iii) logistic}                & $X$ &      $\frac{\rho+1}{\rho-1}\left(\frac{C}{j+e^{-rt}(C-j)}-1\right)$                                      &  $t<\frac{1}{r}\log\left(\frac{(C-j)(\rho-1)}{\rho(C-j)+C+j}\right)$                                                             &       $\frac{e^{-rt}\left(C+(e^{rt}-1)j\right)}{Cj}\sqrt{\frac{Ce^{rt}(e^{rt}-1)(\rho+1)(C-j)j}{(\rho-1)\left(C+j(e^{rt}-1)\right)^2}}$                                                       \\
			& $Y$ &   omitted                                         &    NA in closed form                                                           &                      omitted                                        \\ \hline
			\multirow{2}{*}{(iv) extended logistic}       & $X$ & $ \frac{(e^{(1+\varepsilon)t}-1)(N-j)(\rho+1)(2\varepsilon j +N(1-\varepsilon))}{j(2\varepsilon (N-j)+e^{(1+\varepsilon)t}(2\varepsilon j +N(1-\varepsilon)))(\rho-1)}    $                                      &   NA in closed form                                                           &   omitted                                                           \\
			& $Y$ &   omitted                                         &       NA  in closed form                                                   &  omitted                                                            \\ \hline
			\multirow{2}{*}{(v) multisigmoidal logistic} & $X$ &  $\frac{\rho+1}{\rho-1}\left(\frac{C}{j+e^{Q_\beta(t)}(C-j)}-1\right)$                                          &   NA in closed form                                                            &    $\frac{e^{Q_\beta(t)}\left(C+(e^{-Q_\beta(t)}-1)j\right)}{Cj}\sqrt{\frac{Ce^{-Q_\beta(t)}(e^{-Q_\beta(t)}-1)(\rho+1)(C-j)j}{(\rho-1)\left(C+j(e^{-Q_\beta(t)}-1)\right)^2}}$                                                            \\
			& $Y$ &      omitted                                      &        NA in closed form                                                       &   omitted                                                           \\ \hline
			\multirow{2}{*}{(vi) modified Korf}           & $X$ &   $\frac{\rho+1}{\rho-1}\left(\exp\left\{\frac{\alpha}{\beta}\left[1-(1+t)^{-\beta}\right]\right\}-1\right),$                                         &  $t\ge\widetilde t=\begin{cases}
				0,\text{ if } \alpha-\beta\log\left(\frac{2\rho}{\rho+1}\right)<0\\
				\left[1-\frac\beta\alpha\log\left(\frac{2\rho}{1+\rho}\right)\right]^{-1/\beta}-1, \text{ otherwise}
			\end{cases}    $                                                         &         $\sqrt{\frac{1+\rho}{j(\rho-1)}\left[1-\exp\left\{\frac\alpha\beta\left[(1+t)^{-\beta}-1\right]\right\}\right]}$                                                     \\
			& $Y$ &    omitted                                        &    NA  in closed form                                                         &      omitted 
		\end{tabular}
		\label{tab:Tabella7}
	\end{sidewaystable}

	Note that knowing $M(t)$ gives the possibility to get $Var_X(t;j)$, $m_Y(t;j)$ and $Var_Y(t;j)$ by means of Eqs.\ \eqref{VarXprop}, \eqref{mYprop} and \eqref{m2Yprop}, respectively. Moreover, we remark that when $m_X(t;j)-m_Y(t;j)>0$ then the expected number of spreaders is greater than the expected number of inactive individuals.
	Other interesting limit behaviours are given in Table \ref{tab:Tabella8} where the function $\mathcal V_{i,j}$, $\mathcal C_{i,j}$ and $\mathcal R_{i,j}$, for $i=1,2,\dots,6$, $j=1,2,3$  are given in Table \ref{tab:Tabella10}.
	We point out that in the Gompertz, generalized Gompertz, extended logistic, modified Korf cases, all the available limits with respect to the time are finite, whereas in the logistic case the variance $Var_Y(t;j)$ diverges as $t\to+\infty$.
	In Figures \ref{fig:Figure23}(a)$\div$(d), we provide the conditional means $m_X(t;j)$ and $m_Y(t;j)$, the conditional variance $Var_X(t;j)$ and $Var_Y(t;j)$. Moreover, Figures \ref{fig:Figure23}(e)-(f) show the conditional correlation coefficient $Corr(t;j)$ and the conditional coefficient $r(t;j)$. Note that in the Gompertz case, the processes $X(t)$ and $Y(t)$ given the initial state $(X(0), Y(0))=(j,0)$ are always positively correlated, whereas in the logistic case the correlation coefficient is close to 0. 
	{
	Comparing the results related to cases (i)-(vi) shown in Figure  \ref{fig:Figure23}, we remark that expected number of spreaders and the expected number of inactive individuals tend to the corresponding carrying capacities (i.e. the maximum reachable number of spreaders/inactives in the population) but with different speed. The slowest case is the generalized Gompertz, whereas the fastest is the multisigmoidal logistic. Hence, in the multisigmoidal logistic case (generalized Gompertz case), the fake news exhibits the fastest (the slowest) diffusion among the population and the fastest (slowest) growth of the inactives. Moreover, the correlation between spreaders and inactives is the highest (from the beginning to the end of the diffusion) in the Gompertz case, whereas it is the lowest in the multisigmoidal logistic case at the beginning of the propagation of the rumor and in the logistic case for large times. Similarly, the adimensional correlation index \eqref{corr-index} is always greater than 1 (corresponding to highly positive correlation) in the Gompertz case whereas in all the other cases  such index  is smaller than 1 (corresponding to negative correlation) at the beginning, and it  is greater than 1 for large times.
}

	Moreover, Table \ref{tab:Tabella11} summarizes the most relevant information considering the case in which $m_Y(t;j)$ is set equal to the Korf and to the Mitscherlich growth function, such as the expression of the conditional mean $m_Y(t;j)$, the expression of the cumulative inactivity intensity $M(t)$, the time intervals in which the sign of the difference $m_X(t;j)-m_Y(t;j)$ is positive, the sign of the correlation coefficient $Corr(t;j)$, the Fano factor, the time intervals of overdispersion and the coefficient of variation for both the conditional processes $X(t)$ and $Y(t)$ given the initial state $(X(0), Y(0))=(j,0)$, the limit behaviors of the means, of the variances, of the covariance and of the adimensional correlation index $r(t;j)$.
{
	Note that if $t$ is large, the expected number of spreaders is greater than the expected number of inactives. Whereas, if $t$ is close to $0$ and the mean of $Y(t)$ is the Mitscherlich growth function, then the process $X(t)$  is underdispesed. On the other hand, if $m_Y(t;j)$ is the Korf function, then the process $X(t)$ is overdispersed for $t$ close to $0$. The coefficient of variation of $X(t)$ tends to a finite value for $t\to +\infty$ both in Case (I) and Case (II). Moreover, for both Case (I) and Case (II), the means $m_X(t;j)$ and $m_Y(t;j)$ are significant indexes since the corresponding variances are finite. 
Considering Figure \ref{fig:Figure25}, we note that for the specific values of parameters, the expected number of spreaders/inactives is higher in Case (I). Also for the variances of $X(t)$ and $Y(t)$, we note that in Case (I) we obtain the greatest values. Whereas, at the beginning of the propagation of the rumor, the correlation between spreaders and inactive individuals is higher in Case (I), but for large times it is higher in Case (II). Regarding the adimensional correlation index $r(t;j)$, we note that it is always higher in Case (II). Moreover, for that specified choice of parameters, one has $r(t;j)<1$ for any $t\ge 0$ in Case (I), meaning that when the expected number of inactive individuals is the Mitscherlich growth function, the components $X(t)$ and $Y(t)$ are negatively correlated. 
}
	\begin{table}[]
		\caption{Some relevant characteristic and some limit behaviors of the conditional processes $X(t)$ and $Y(t)$ given the initial state $(X(0), Y(0))=(j,0)$ by considering the Korf and the Mitscherlich growth models for $m_Y(t;j)$. Note that $\alpha,\beta,j>0$, $\rho>1$. }
		\tiny
		\centering 
		\begin{tabular}{l|llll}
			growth curve    for $Y(t)$        & $m_Y(t;j)$                                                                                                                                                       & $M(t)$                                                   & $\sgn (m_X(t;j)-m_Y(t;j))=+1,\;t\ge \widetilde t$                                                                                                                       & $\sgn (Corr(t;j))$  \\     \hline
			(I) Korf                & $\frac{j}{\rho-1}\exp\left(-\frac{\alpha}{\beta}t^{-\beta}\right)$                                                                                                                        & $\frac{1}{\rho-1}\log\left[1+\exp\left(-\frac{\alpha}{\beta}t^{-\beta}\right)\right]$                   & $\widetilde t=\begin{cases} \left(\frac{\alpha}{\beta \log\frac{2-\rho}{\rho-1}}\right)^{1/\beta}, &\text{ if } \frac{3}{2}<\rho\le 2\\ 0, &\text{ otherwise}\end{cases}$           & changing     \\  \hline
			(II) Mitscherlich  &$\beta\left(1-e^{-\alpha t}\right)$ & $\frac{1}{\rho-1}\log\left(2-e^{-\alpha t}\right)$ &$\widetilde t=\begin{cases}
				0, \; \text{if }\rho>\frac{2\beta -j}{\beta}\\
				\frac{1}{\alpha}\log\left(\frac{\beta(2-\rho)}{\beta(2-\rho)-j}\right),\; \text{otherwise}
			\end{cases}$ & changing
			
		\end{tabular}\\
		
		\vspace{0.5cm}
		
		\begin{tabular}{l|llll}
			growth curve      for $Y(t)$                       & process & \multicolumn{1}{c}{Fano factor $D_Z(t;j)$} & \multicolumn{1}{c}{underdispersion conditions} & \multicolumn{1}{c}{coefficient of variation $\sigma_Z(t;j)$} \\ \hline
			\multirow{2}{*}{(I) Korf}                & $X$                                     &$\frac{\rho+1}{\rho-1}\exp\left(-\frac{\alpha}{\beta}t^{-\beta}\right)$               &         $t\ge \widetilde t= \left(-\frac{\alpha}{\beta\log\frac{\rho-1}{\rho+1}}\right)^{1/\beta}$                                            & $\frac{1}{j(\rho-1)\sqrt{1+\exp\left(\frac\alpha\beta t^{-\beta}\right)}}$  \\
			
			& $Y$ &        omitted                                  &     NA in closed form                                                         &  omitted                                                              \\ \hline
			\multirow{2}{*}{(II) Mitscherlich}                & $X$                                     &$\frac{\rho+1}{j}\beta\left(1-e^{-\alpha t}\right)$               &         $t\le \widetilde t= \frac1\alpha\log\frac{\beta(\rho+1)}{\beta(\rho+1)-j}$                                            & $
			\sqrt{\frac{\rho+1}{j\left(\rho-1+\frac{e^{\alpha t} j}{\beta(e^{\alpha t}-1)}\right)}}$  \\
			
			& $Y$ &        omitted                                  &     NA in closed form                                                         &  omitted                                                              
			\end{tabular}
		
			\vspace{0.5cm}
			
		\begin{tabular}{l|lllllll}
		growth curve                             & \multicolumn{1}{c}{conditions}                                  & \multicolumn{1}{c}{$m_X(t;j)$} & \multicolumn{1}{c}{$m_Y(t;j)$} & \multicolumn{1}{c}{$Var_X(t;j)$} & \multicolumn{1}{c}{$Var_Y(t;j)$} & \multicolumn{1}{c}{$Cov(t;j)$} & \multicolumn{1}{c}{$r(t;j)$} \\ \hline
		\multirow{5}{*}{(I) Korf}                & $t\to+\infty$         & $2j$ 		&$\displaystyle \frac{j}{\rho-1}$			&$\displaystyle \frac{2j(\rho+1)}{\rho-1}$			&$<+\infty$		&$\frac{2j(1+\rho-\rho\log4)}{(\rho-1)^2}$ 		&$\frac{j-1+\rho+j\rho-\rho\log 4}{j(\rho-1)}$\\
		& $\alpha\to0$                                                    & $2j$				&$\frac{j}{\rho-1}$				&$\frac{2j(\rho+1)}{\rho-1}$				&$<+\infty$           &$\frac{2j(1+\rho-\rho\log4)}{(\rho-1)^2}$          &$\frac{j-1+\rho+j\rho-\rho\log 4}{j(\rho-1)}$                            \\
		& $\alpha\to+\infty$ & $j$ &$0$ &$0$ 			&$0$ 					&$0$      &$\frac{(\rho+1)(j-1)}{j(\rho-1)}$                              \\
		& $\beta\to 0$                                                     & $j$ 	&$0$			&$0$			&$0$			&$0$	&$\frac{j-1+\rho j+\rho}{j(\rho-1)}$                           \\
		& $\beta\to+\infty$                                             &  $2j$ 			&$\displaystyle \frac{j}{\rho-1}$		&$\displaystyle \frac{2j(\rho+1)}{\rho-1}$	            &$<+\infty$			&$\frac{2j(1+\rho-\rho\log4)}{(\rho-1)^2}$			&$\frac{j-1+\rho+j\rho-\rho\log 4}{j(\rho-1)}$        \\ \hline
		\multirow{5}{*}{(II) Mitscherlich}                & $t\to+\infty$         & $j+\beta(\rho-1)$ 		&$\beta	$		&$\displaystyle \frac{\beta (\rho+1)(j+\beta(\rho-1))}{j}$			&$<+\infty$		&$<+\infty$ 		
		& $<+\infty$\\
		& $\alpha\to0$                                                    & $j$				&$0$				&$0$				&$0$           &$0$          &$\frac{j-1}{j}$                            \\
		& $\alpha\to+\infty$ & $j+\beta(\rho-1)$ 		 &$\beta$     &$\displaystyle \frac{\beta (\rho+1)(j+\beta(\rho-1))}{j}$	&$<+\infty$&$<+\infty$&$<+\infty$	                         \\
		& $\beta\to 0$                                                     & $j$ 	&$0$			&$0$			&$0$			&$0$	&$\frac{j-1}{j}$                           \\                   
	\end{tabular}
		
		\label{tab:Tabella11}
	\end{table}

	\begin{figure}[t]
		\centering
		\subfigure[]{\includegraphics[scale=0.35]{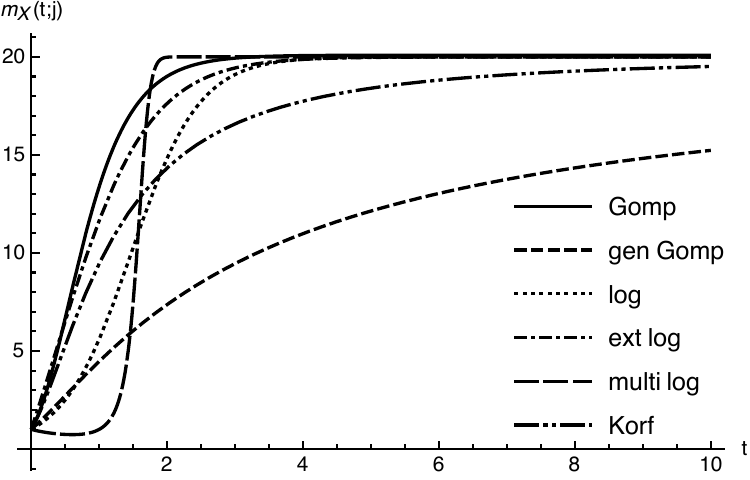}}\qquad
		\subfigure[]{\includegraphics[scale=0.35]{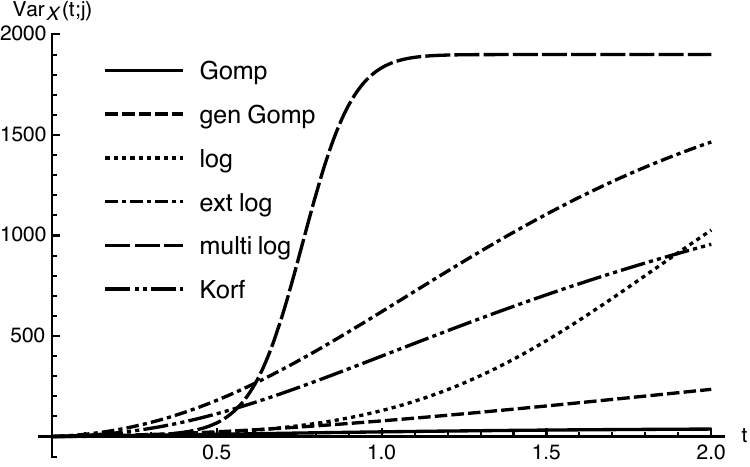}}\qquad
		\subfigure[]{\includegraphics[scale=0.35]{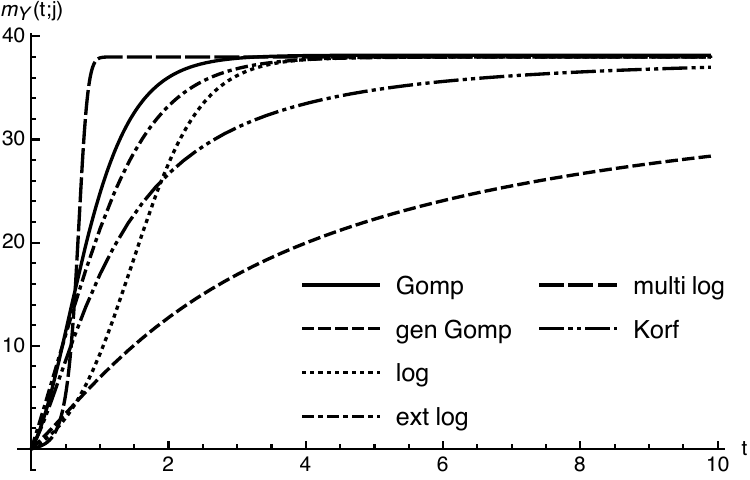}}\\
		\subfigure[]{\includegraphics[scale=0.35]{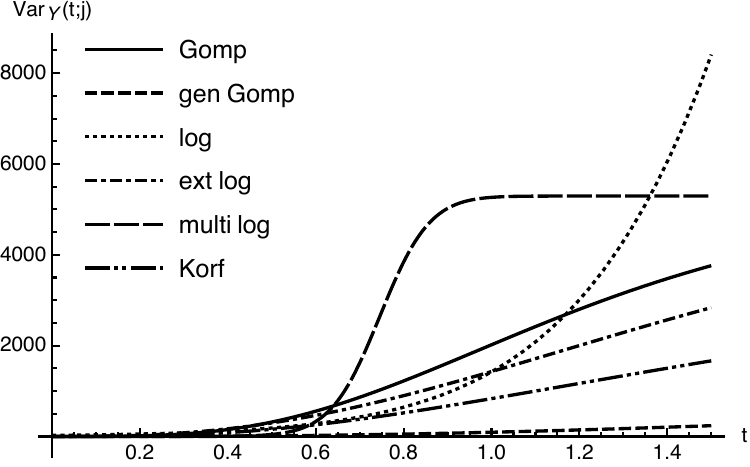}}\qquad
		\subfigure[]{\includegraphics[scale=0.35]{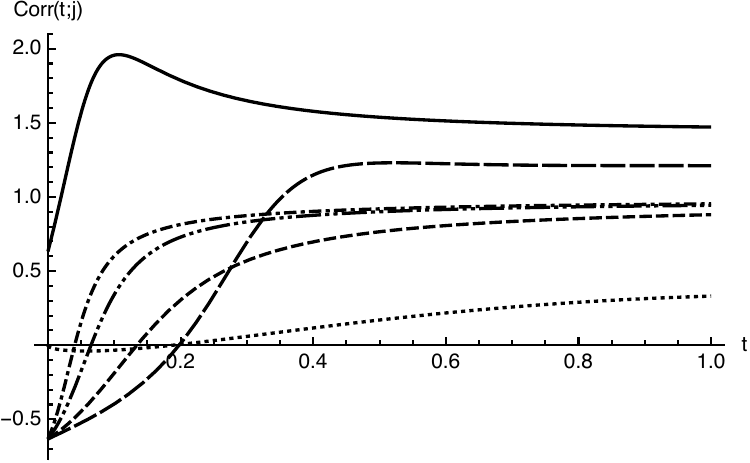}}\qquad
		\subfigure[]{\includegraphics[scale=0.35]{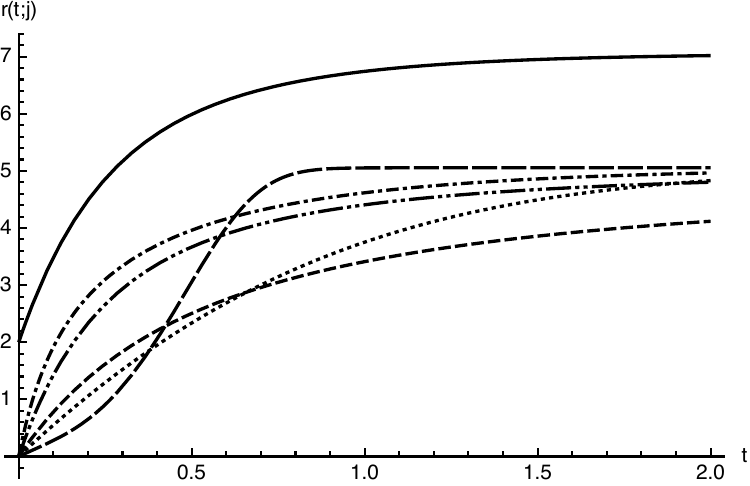}}
		\caption{The conditional means $m_X(t;j)$ and $m_Y(t;j)$, the conditional variances $Var_X(t;j)$ and $Var_Y(t;j)$, the conditional correlation coefficient $Corr(t;j)$ and the conditional coefficient $r(t;j)$ with $\rho=1.5$, $j=1$, $\alpha=3,\beta=2$ for Gompertz case, $A=\log 20, b=1$ for generalized Gompertz case,  $r=2,C=20$ for logistic case, $N=20, \varepsilon=0.5$ for the extended logistic case, $Q_\beta(t)=t-t^2+10t^3-t^4, C=20$ for the multisigmoidal logistic case and $\alpha=2\log20, \beta=2$ for the Korf case.}
		\label{fig:Figure23}
	\end{figure}

	\begin{sidewaystable}[]
		\caption{The limit behaviour of the conditional means $m_X(t;j)$ and $m_Y(t;j)$, the conditional variance $Var_X(t;j)$ and $Var_Y(t;j)$, the conditional covariance $Cov(t;j)$ and the conditional   correlation index $r(t;j)$.}
		\tiny
		\begin{tabular}{l|l|llllll}
			growth curve                             & \multicolumn{1}{c|}{conditions}                                  & \multicolumn{1}{c}{$m_X(t;j)$} & \multicolumn{1}{c}{$m_Y(t;j)$} & \multicolumn{1}{c}{$Var_X(t;j)$} & \multicolumn{1}{c}{$Var_Y(t;j)$} & \multicolumn{1}{c}{$Cov(t;j)$} & \multicolumn{1}{c}{$r(t;j)$} \\ \hline
			\multirow{5}{*}{(i) Gompertz}                & $t\to+\infty$         & $je^\alpha$ 		&$\displaystyle \frac{j(e^\alpha-1)}{\rho-1}$			&$\displaystyle \frac{je^\alpha(\rho+1)(e^\alpha-1)}{\rho-1}$			&$\mathcal V_{1,1}(j)$		&$\mathcal C_{1,1}(j)$ 		&$\mathcal R_{1,1}(j)$\\
			& $\alpha\to0$                                                    & $j$				&$0$				&$0$				&$\mathcal V_{1,2}(t)$           &$0$          &$1+\frac 1j$                            \\
			& $\alpha\to+\infty$ & $+\infty$ &$+\infty$ &$+\infty$ 			&$0$ 					&$+\infty$      &$1+\frac{3}{j}+\frac{2}{j(\rho-1)}$                              \\
			& $\beta\to 0$                                                     & $j$ 	&$0$			&$0$			&$0$			&$0$	&$1+\frac 1j$                           \\
			& $\beta\to+\infty$                                             &  $je^\alpha$ 			&$\displaystyle \frac{j(e^\alpha-1)}{\rho-1}$		&$\displaystyle \frac{je^\alpha(\rho+1)(e^\alpha-1)}{\rho-1}$	            &$+\infty$			&$\mathcal C_{1,1}(j)$			&$\mathcal R_{1,1}(j)$                           \\ \hline
			\multirow{5}{*}{(ii) generalized Gompertz}    & $t\to+\infty$                                                    & $je^{A b}$                               &    $\frac{j}{\rho-1}\left(e^{Ab}-1\right)  $                       & $\frac{j(\rho+1)e^{Ab}(e^{Ab}-1)}{\rho-1}  $                               
			& $\mathcal V_{2,1}(b;j)$                               &$\mathcal C_{2,1}(j)$ &$\mathcal R_{2,1}(b;j)$                           \\
			& $A\to+\infty$                                                    &     $+\infty$                           &   $+\infty$                             &      $+\infty$                            &$+\infty$                                  &  $+\infty$                              &$\frac{1+j(\rho-1)+\rho}{j(\rho-1)}$                              \\
			& $A\to 0$                                                         & $j$                               & $0$                               &  $0$                                &   $0$                               &  $0$                          &  $\frac{j-1}{j}$                            \\
			& $b\to +\infty$                                                   & $je^{At}$                               &$\frac{j(e^{At}-1)}{\rho-1} $                               &   $\frac{j(\rho+1)e^{At}(e^{At}-1)}{\rho-1}  $                                     &    $\mathcal V_{2,1}(t;j)$                                  & $\mathcal C_{2,2}(t;j)$                                  &$\mathcal R_{2,1}(t;j)$                       \\
			& $b\to 0$                                                         & $j$                               & $0$                               &$0$                                  &    $0$                              &  $0$                              &   $\frac{j-1}{j}$                           \\ \hline
			\multirow{4}{*}{(iii) logistic}                & $t\to+\infty$                                                                           & $C$ 					&$\frac{C-j}{\rho-1}$								&$\displaystyle \frac{C(\rho+1)(C/j-1)}{\rho-1}$			&$+\infty$								&$\mathcal{C}_{3,1}(j)$ 		&$\mathcal R_{3,1}(j)$ \\
			& $C\to+\infty$                                      & $je^{rt}$ 				&$\frac{j(e^{rt}-1)}{\rho-1}$ 				&$\displaystyle \frac{je^{rt}(e^{rt}-1)(1+\rho)}{\rho-1}$ 			&$\mathcal V_{3,1}(t;j)$ 					&$\mathcal C_{3,2}(j)$      &$\mathcal R_{3,2}(t;j)$                              \\
			& $r\to 0$                                       & $j$ 					&$0$														&$0$																						&$\frac{j(1+\rho+j(\rho-1))}{(\rho-1)^3}$			&$0$	&$1-\frac 1j$                              \\
			& $r\to + \infty$                               &$C$ 					&$\displaystyle \frac{C-j}{\rho-1}$		&$\displaystyle \frac{C(\rho+1)(C/j-1)}{\rho-1}$		            &$+\infty$			&$\mathcal C_{3,1}(j)$			&$\mathcal R_{3,1}(j)$                              \\ \hline
			\multirow{4}{*}{(iv) extended logistic}       & $t\to+\infty$                                                    & $N$ 																		&$\displaystyle \frac{N-j}{\rho-1}$														&$\displaystyle \frac{N(N-j)(1+\rho)}{j(\rho-1)}$			&$<+\infty$	 &        NA                        &   NA                           \\
			& $\varepsilon\to +1$                                              & $\displaystyle\frac{jNe^{2t}}{(e^{2t}-1)j+N}$		&$\displaystyle \frac{(e^{2t}-1)j(N-j)}{(\rho-1)(N+j(e^{2t}-1))}$		&$\displaystyle\frac{e^{2t}(e^{2t}-1)jN(N-j)(1+\rho)}{(\rho-1)((e^{2t}-1)j+N)^2}$				&$\mathcal{V}_{4,1}(t;j)$       &          NA                      &     NA                         \\
			& $\varepsilon\to -1$                                              & $\displaystyle\frac{N(j-2jt+2Nt)}{N-2jt+2Nt}$ 	&$\displaystyle \frac{2(N-j)^2t}{(\rho-1)(N-2jt+2Nt)}$ 						&$\displaystyle \frac{2(N-j)^2N(1+\rho)t(2Nt-j(2t-1))}{j(\rho-1)(N-2jt+2Nt)^2}$ 	&$<+\infty$ &  NA                              &           NA                   \\
			& $N\to +\infty$                                                   &    $+\infty$                            &  $+\infty$                              &   $+\infty$                               &  $+\infty$                                & NA                               &        NA                      \\ \hline
			\multirow{4}{*}{(v) multisigmoidal logistic} & $t\to+\infty$                                                    &     $C$                           &$0$                                &  $0$                                & $<+\infty$                                 &  $<+\infty$                              &  $<+\infty$                            \\
			& $\beta_i\to-\infty$                                              &  $C$                              &          $0$                        &    $0$                              &  $<+\infty$                                &   $\frac{Cj}{\rho-1}$                             &                     $<+\infty$         \\
			& $\beta_i\to+\infty$                                              &  $0$                              &   $\frac{C-j}{\rho-1}$                                &     $\frac{C(C-j)(\rho+1)}{j(\rho-1)}$                           &   $<+\infty$                           &    $<+\infty$                          &      $<+\infty$                        \\
			& $C\to+\infty$                                                    &  $je^{Q_\beta(t)}$                              &  $\frac{j(e^{Q_\beta(t)-1})}{\rho-1} $                              &  $\displaystyle \frac{je^{-Q_\beta(t)}(e^{-Q_\beta(t)}-1)(1+\rho)}{\rho-1}$                                 &   $<+\infty$                               &  $<+\infty$                              &   $\frac{\frac{1+j(\rho-1)+\rho}{j}-\frac{2\rho Q_\beta(t)}{e^{Q_\beta(t)}-1}}{\rho-1}$                           \\ \hline
			\multirow{5}{*}{(vi) modified Korf}           & $t\to+\infty$                                                  & $je^{\alpha/\beta}$ 					&$\frac{j(e^{\alpha/\beta}-1)}{\rho-1}$								&$\frac{e^{\alpha/\beta}(e^{\alpha/\beta}-1)j(\rho+1)}{\rho-1}$			&$\mathcal V_{6,2}(j)$								&$\mathcal{C}_6(j)$ 		&$\mathcal R_{6,1}(j)$                              \\
			& $\alpha\to 0$                                                    & $j$						&$0$														&$0$																					&$0$           					&$0$          &$\frac{j-1}{j}$                            \\
			& $\alpha\to +\infty$                                             & $+\infty$ 				&$+\infty$ 				&$+\infty$ 			&$+\infty$ 					&$+\infty$      &$\frac{1+j(\rho-1)+\rho}{j(\rho-1)}$                        \\
			& $\beta\to 0$                                                    & $j(1+t)^{\alpha}$ 					&$\frac{j(-1+(1+t)^\alpha)}{\rho-1}$														&$\mathcal V_{1,1}(t;j)$																						&$\mathcal V_{6,3}(t;j)$			&$\mathcal C_7(t;j)$	&$\mathcal R_{6,2}(t;j)$                            \\
			& $\beta\to +\infty$                                         &  $j$ 					&$0$		&$0$		            &$+\infty$			&$0$			&$\frac{j-1}{j}$                             
		\end{tabular}
		\label{tab:Tabella8}
	\end{sidewaystable}

	\begin{table}[t]
	\centering
	\tiny
	\caption{Limit functions of the conditional variances, of the correlation coefficient and of the coefficient $r(t;j)$. Note that such auxiliary functions are needed only for certain cases.} \label{tab:Tabella10}
	\begin{tabular}{l|l}
		growth curve                      & variance limits \\ \hline
		\multirow{2}{*}{(i) Gompertz}      &   $\displaystyle \mathcal V_{1,1}(j):=\frac{\beta\left(-\alpha(1+\alpha)+(1+\alpha)e^{2j}+e^j(-1+\alpha^2-4\alpha j)\right)}{(\alpha-1)^3}  $           \\
		&  $ \displaystyle \mathcal V_{1,2}(t;j):=\beta\exp\left\{j-2je^{-\rho t}\right\}\left(\exp\left\{je^{-\rho t}\right\}-e^j\right)$              \\ \hline
		(ii) generalilzed Gompertz          & $\displaystyle \mathcal V_{2,1}(x;j)=\frac{j\left(e^{2Ax}(\rho+1)-\rho(\rho+1)+e^{Ax}\left(-1-4Ax\rho+\rho^2\right)\right)}{(\rho-1)^3} $               \\ \hline
		(iii) logistic                       &   $ \displaystyle \mathcal {V}_{3,1}(t;j):=j\frac{-\rho(1+\rho)+e^{rt}(\rho^2-4r\rho t -1)+e^{2rt}(2-j+\rho(2+j))}{(\rho-1)^3}  $           \\ \hline
		\multirow{2}{*}{(iv) extended logistic}            &     $\displaystyle \mathcal{V}_{4,1}(t;j):=\frac{j}{(\rho-1)^3(N+j(e^{2t}-1))^2}\left[(e^{2t}-1)(N-j)(1+\rho)((N-j)\rho\right.$\\
		&$\left.+e^{2t}(N+j\rho))\right. \left.+4e^{2t}N\left((e^{2t}-1)j+N\right)\rho\left(-2t+\log\frac{(e^{2t}-1)j+N}{N}\right)\right]$            \\ \hline
		\multirow{3}{*}{(vi) modified Korf} &  $ \displaystyle \mathcal V_{6,1}(t;j):=\frac{j(1+\rho)(1+t)^\alpha((1+t)^\alpha-1)}{\rho-1}    $          \\
		&        $\displaystyle\mathcal V_{6,2}(j):=\frac{j\left[-4\alpha e^{\alpha/\beta}\rho+\beta(e^{\alpha/\beta}-1)(1+\rho)(e^{\alpha/\beta}+\rho)\right]}{\beta(\rho-1)^3}$         \\
		&       $\displaystyle \mathcal V_{6,3}(t;j):=\frac{j\left[(1+\rho)((1+t)^\alpha-1)(\rho+(1+t)^\alpha)-4\alpha\rho(1+t)^\alpha\log(1+t)\right]}{(\rho-1)^3}   $      
	\end{tabular}
	\begin{tabular}{l|l}
		growth curve                              & correlation limits \\ \hline
		(i) Gompertz                               &   $\displaystyle \mathcal C_{1,1}(j):=je^\alpha\frac{1-\rho(3+2\alpha)+e^\alpha(3\rho-1)}{(\rho-1)^2}$                \\ \hline
		\multirow{2}{*}{(ii) generalilzed Gompertz} &     $\displaystyle \mathcal C_{2,1}(j):= \frac{je^{Ab}\left(-1+2j-(1+2Ab+2j)\rho+2^{Ab}(1+2j(\rho-1)+\rho)\right)}{(\rho-1)^2} $               \\
		&  $\displaystyle C_{2,2}(t;j):=\frac{je^{At}\left(-1+e^{At}(\rho+1)-\rho(1+2At)\right)}{(\rho-1)^2}$                  \\ \hline
		\multirow{2}{*}{(iii) logistic}              &  ${C}_{3,1}(j):=\frac{C\left((\rho+1)(C-j)-2\rho j \log C/j\right)}{j(\rho-1)^2}$                  \\
		&         $\displaystyle\mathcal{C}_{3,2}(t;j):=je^{rt}\frac{e^{rt}(\rho+1)-1-\rho(1+2rt)}{(\rho-1)^2}$           \\ \hline
		\multirow{2}{*}{(vi) modified Korf}         &     $C_{6,1}(j):=\frac{e^{\alpha/\beta}j(-2\alpha\rho+\beta(e^{\alpha/\beta}-1)(\rho+1))}{\beta(\rho-1)^2}$               \\
		&         $\displaystyle\mathcal C_{6,2}(t;j):=\frac{j(1+t)^\alpha\left((1+\rho)(-1+(1+t)^\alpha)-2\alpha\rho\log(1+t)\right)}{(\rho-1)^2}$          
	\end{tabular}
	\begin{tabular}{l|l}
		growth curve                              & $r(t;j)$ limits \\ \hline
		(i) Gompertz                               &     $\displaystyle \mathcal R_{1,1}(j):=\frac{1+j-(3+2\alpha+j)\rho+e^\alpha(-1+j(\rho-1)+3\rho)}{(e^\alpha-1)(\rho-1)}$               \\ \hline
		(ii) generalilzed Gompertz &    $ \displaystyle \mathcal R_{2,1}(x;j):=\frac{-2Ax\rho+(e^{Ax}-1)(1+j(\rho-1)+\rho)}{j(\rho-1)(e^{Ax}-1)}    $    \\ \hline
		\multirow{2}{*}{(iii) logistic}              &     $\displaystyle\mathcal{R}_{3,1}(j):=1+\frac{1+\rho}{j(\rho-1)-\frac{2\rho\log C/j}{(\rho-1)(C-j)}}$               \\
		&        $\mathcal{R}_{3,2}(t;j):=\frac{1-j+\rho\left(1-\frac{2rt}{e^{rt}-1}+j\right)}{j(\rho-1)}$            \\ \hline
		\multirow{2}{*}{(vi) modified Korf}         &      $\displaystyle\mathcal R_{6,1}(j):=\frac{1+j(\rho-1)+\rho+\frac{2\alpha\rho}{\beta(1-e^{\alpha/\beta})}}{j(\rho-1)}$              \\
		&      $ \mathcal R_{6,2}(t;j):=\frac{1+j(\rho-1)+\rho-\frac{2\alpha\rho\log(1+t)}{(1+t)^\alpha-1}}{j(\rho-1)}$   
	\end{tabular}

\end{table}

\begin{figure}[t]
	\centering
	\subfigure[]{\includegraphics[scale=0.35]{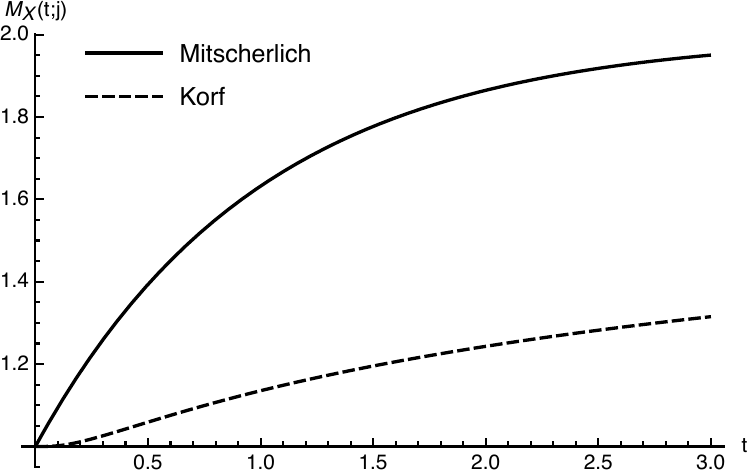}}\qquad
	\subfigure[]{\includegraphics[scale=0.35]{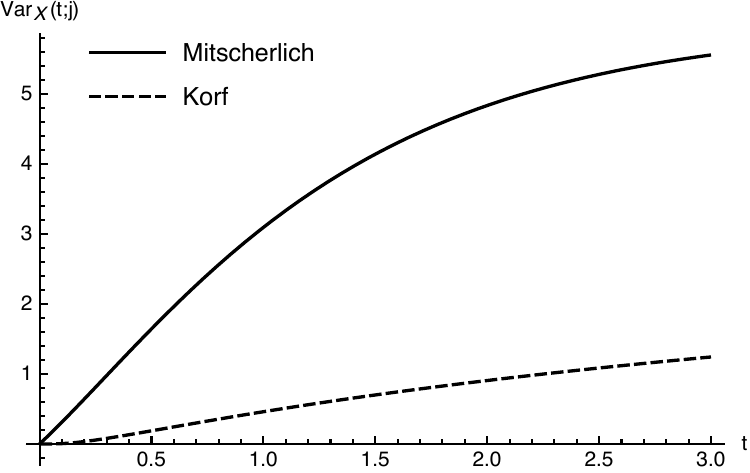}}\qquad
	\subfigure[]{\includegraphics[scale=0.35]{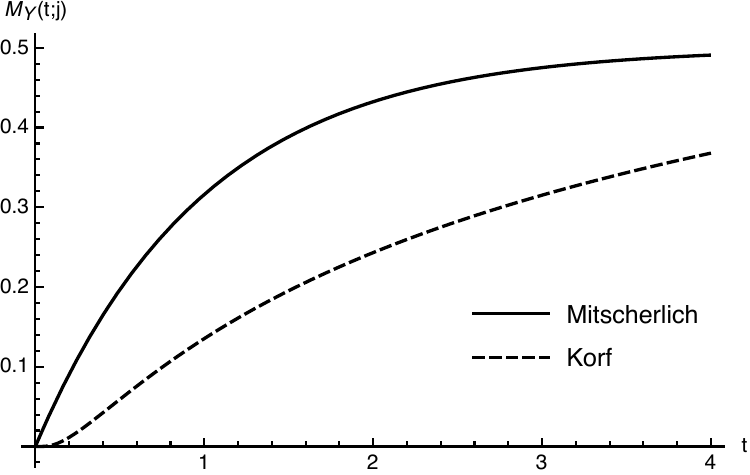}}\\
	\subfigure[]{\includegraphics[scale=0.35]{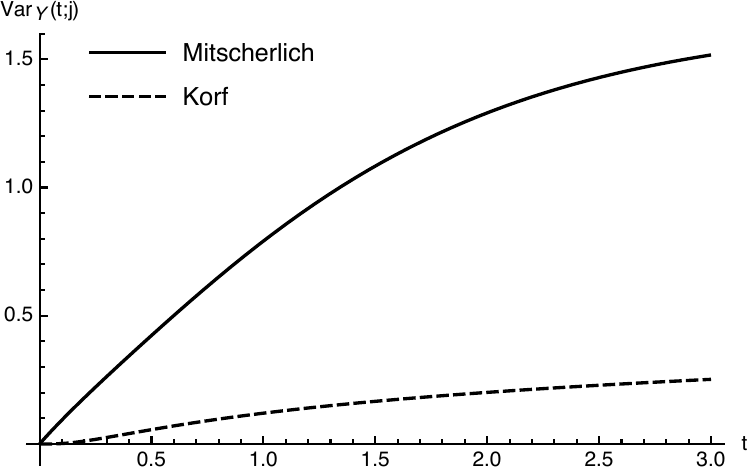}}\qquad
	\subfigure[]{\includegraphics[scale=0.35]{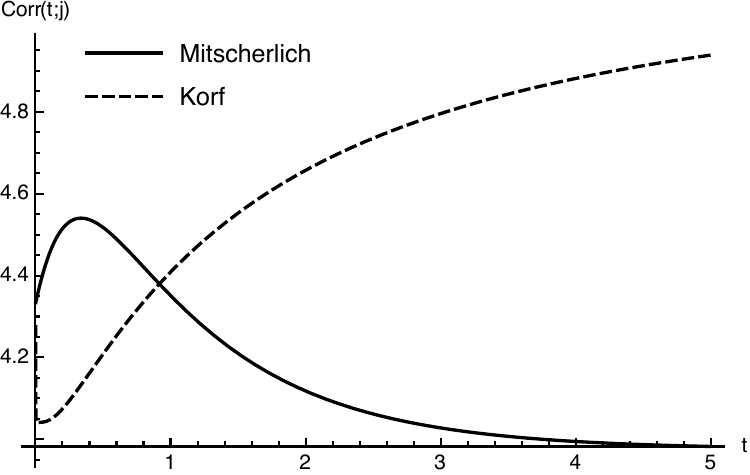}}\qquad
	\subfigure[]{\includegraphics[scale=0.35]{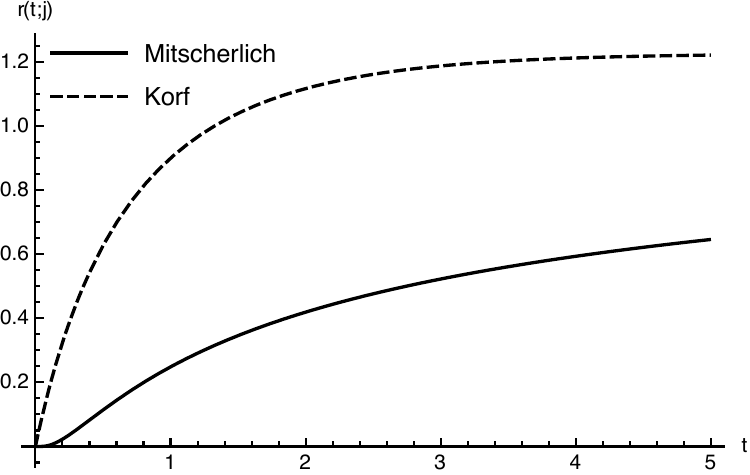}}
	\caption{The conditional means $m_X(t;j)$ and $m_Y(t;j)$, the conditional variances $Var_X(t;j)$ and $Var_Y(t;j)$, the conditional correlation coefficient $Corr(t;j)$ and the conditional coefficient $r(t;j)$ with $\rho=2$, $j=1$, $\alpha=1,\beta=0.5$.}
	\label{fig:Figure25}
\end{figure}

	\section{Application to real data}\label{Section8}
	In this section we consider an application to real data. The datasets come from \cite{linkdati} and concern the temporal diffusion of 5 different fake news on $\mathbb X$/Twitter. For each fake news, the number of tweets and retweets relating to it were counted with the relative time reference. Regarding the time scale, we use the time instant in which the first tweet was posted as the initial time, and number of days since the initial instant as the time unit. Once the temporal evolution of the fake news has been constructed, we use the growth curves analyzed in detail in Section \ref{Sec:special-cases} as candidate models. The parameters involved in the various definitions were estimated by imposing the minimization of the mean square error (MSE) or of the relative absolute error (RAE), as done in \cite{DiCrescenzoetal2022a}. Regarding the multisigmoidal logistic model, we chose $Q_\beta(t)$ as a fourth degree polynomial, since with repeated empirical tests this degree shows to be the right trade off between goodness of estimate and reduced number of parameters to estimate. We remember, in fact, that the presence of many parameters entails a high computational cost and the possibility of having overfit problems. Once all the parameters of interest are estimated, we select the best model as the one leading to the smallest MSE (or to the smallest RAE). The temporal evolution of fake news and the related estimating models are shown in Figure \ref{fig:Figure24}.

	\begin{figure}[t]
		\centering
		\subfigure[]{\includegraphics[scale=0.26]{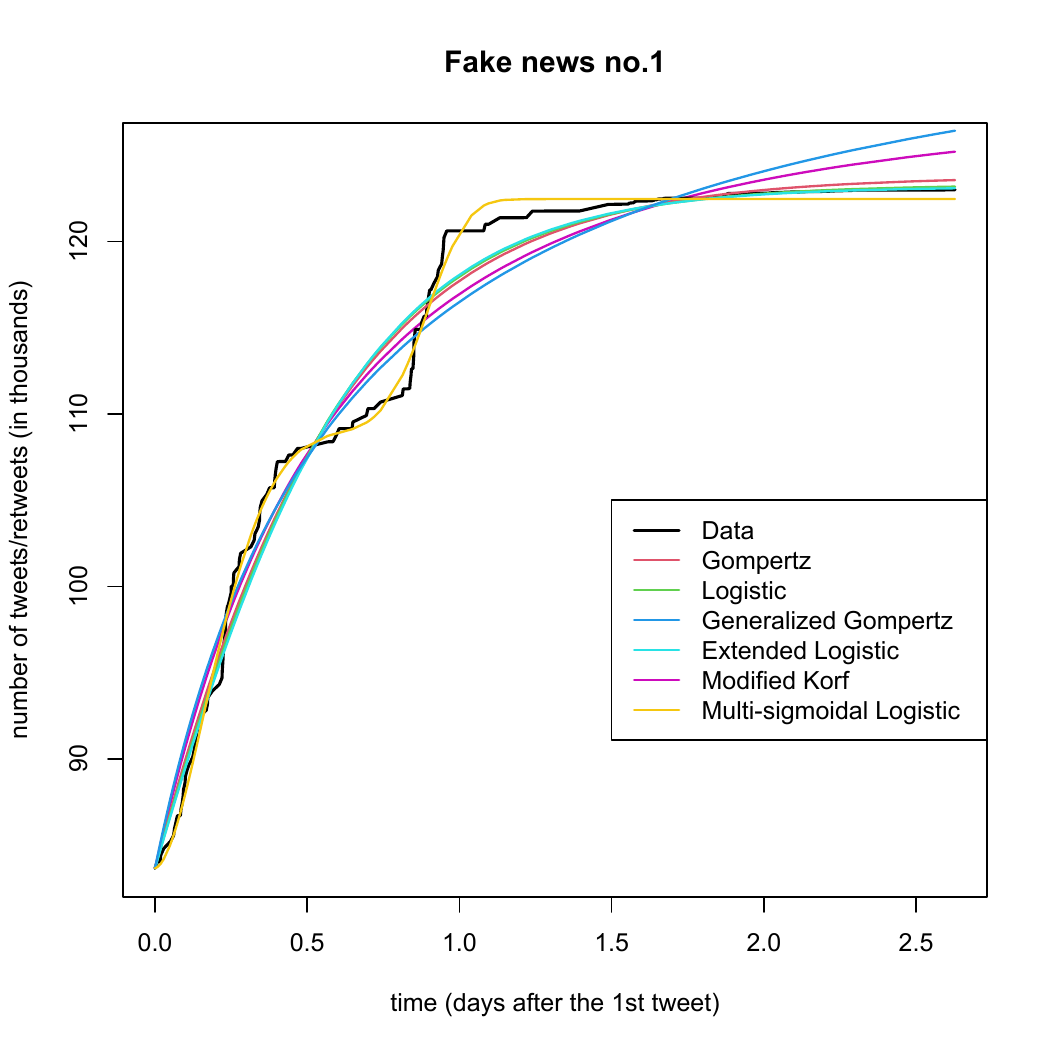}}\qquad
		\subfigure[]{\includegraphics[scale=0.26]{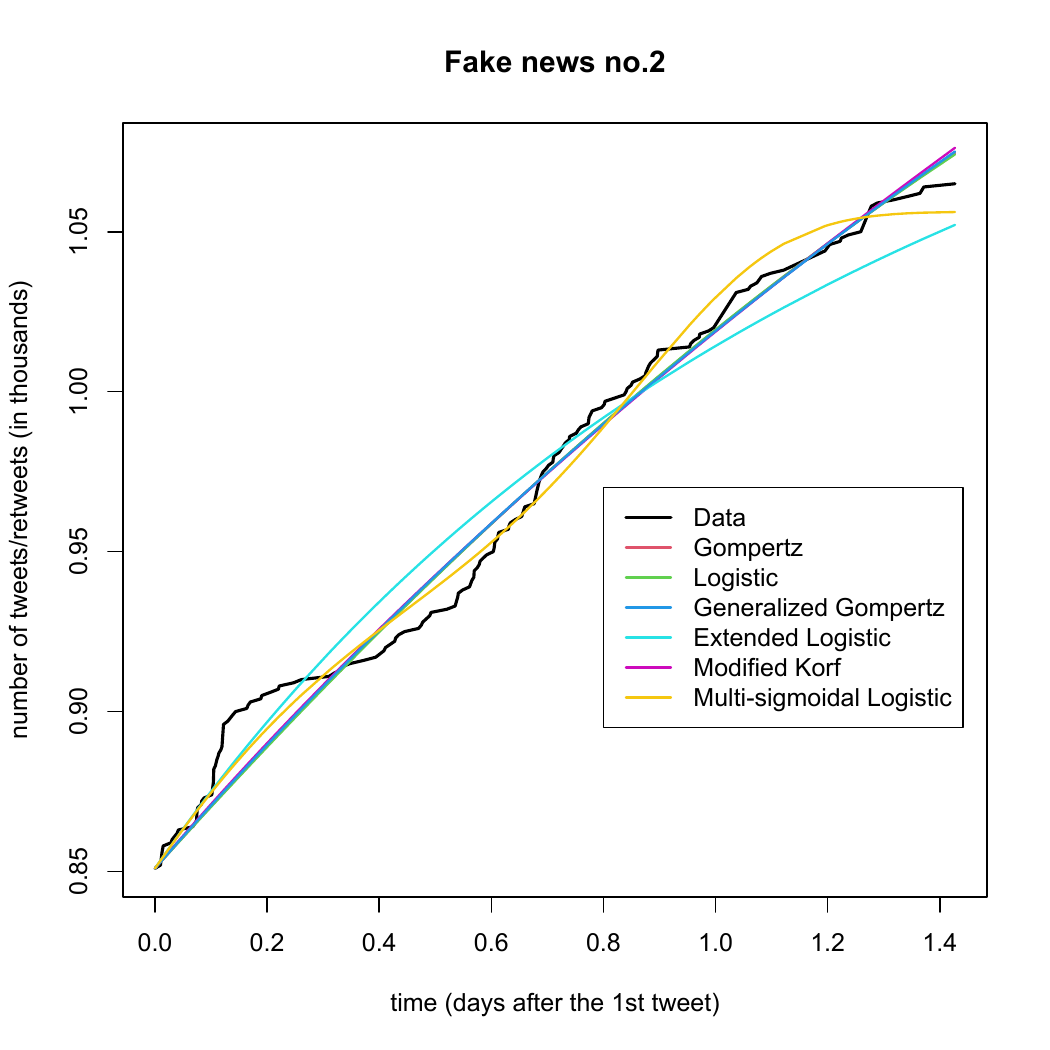}}\qquad
		\subfigure[]{\includegraphics[scale=0.26]{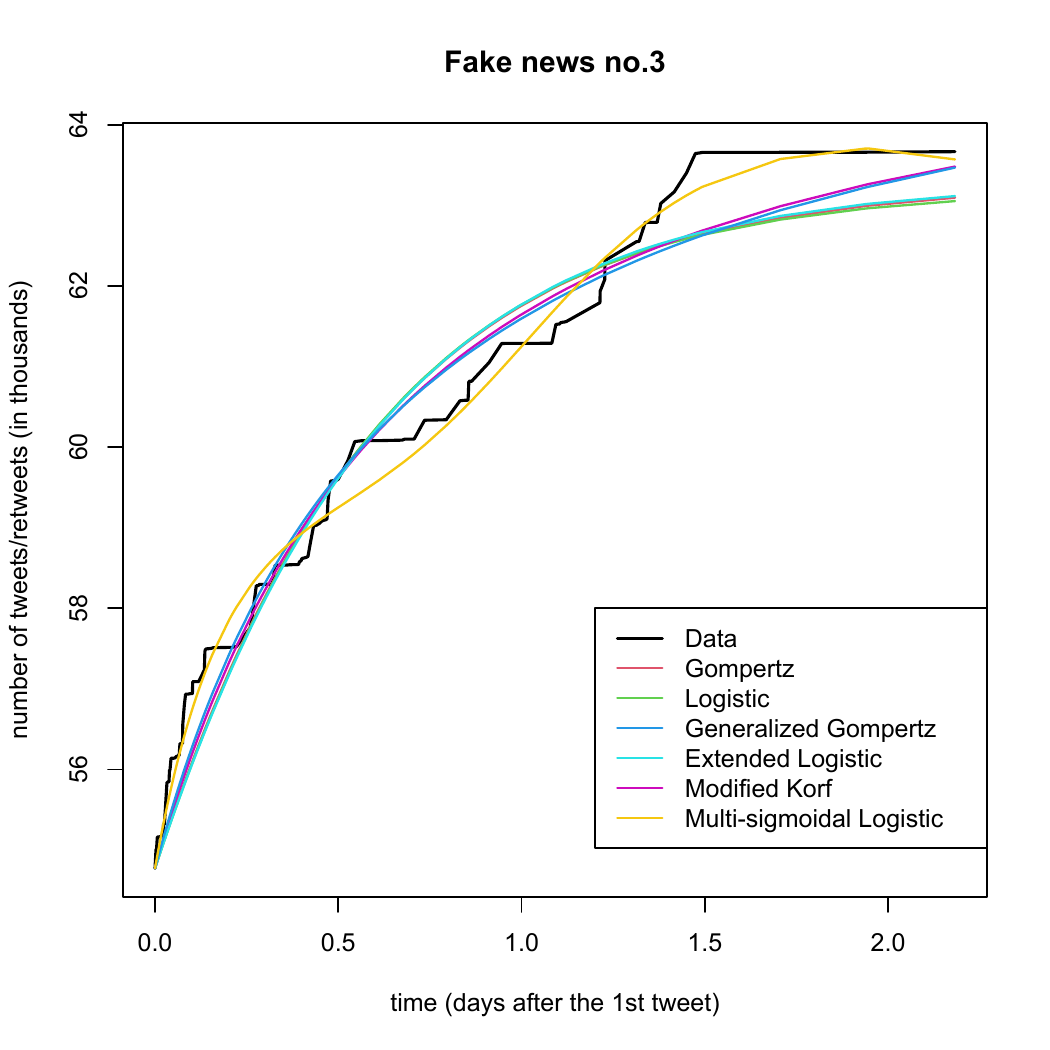}}\\
		\subfigure[]{\includegraphics[scale=0.26]{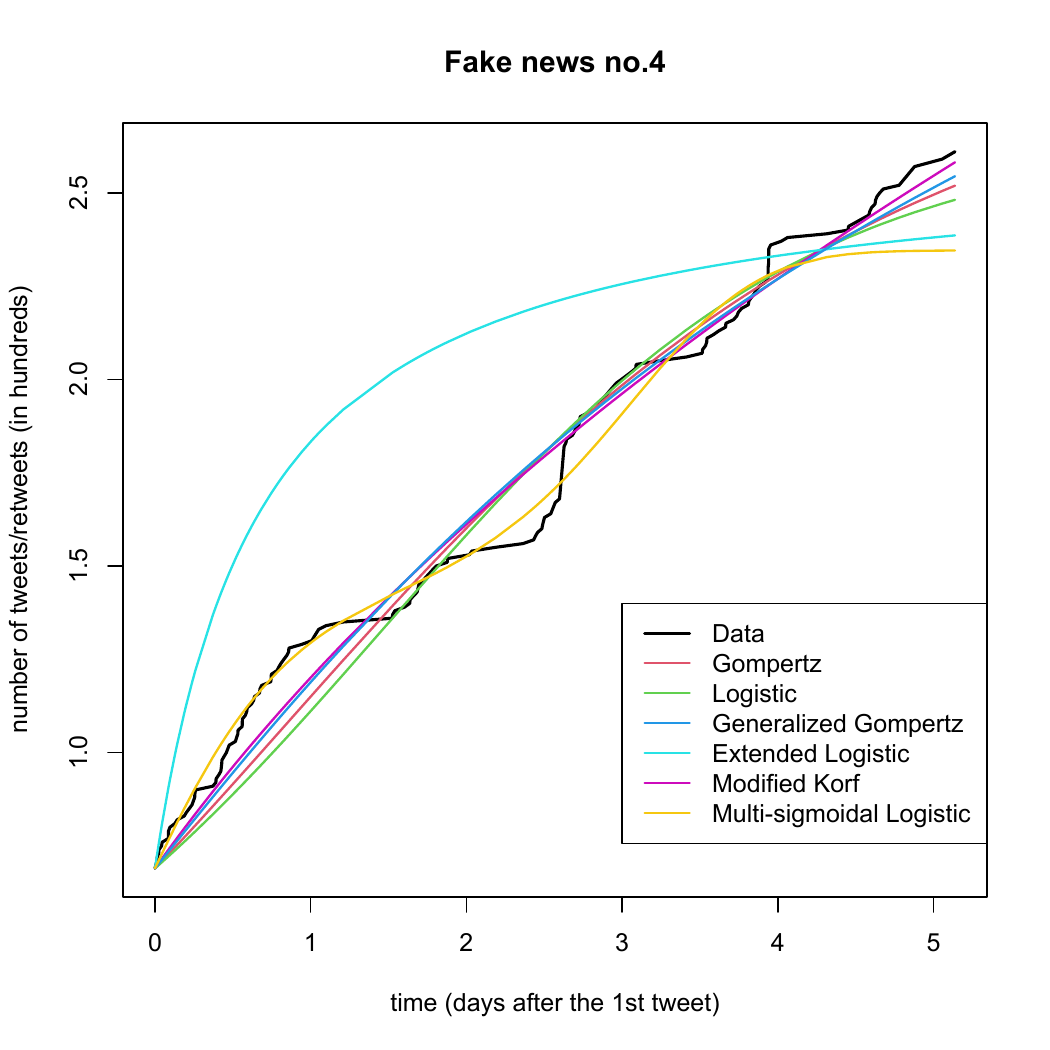}}\qquad
		\subfigure[]{\includegraphics[scale=0.26]{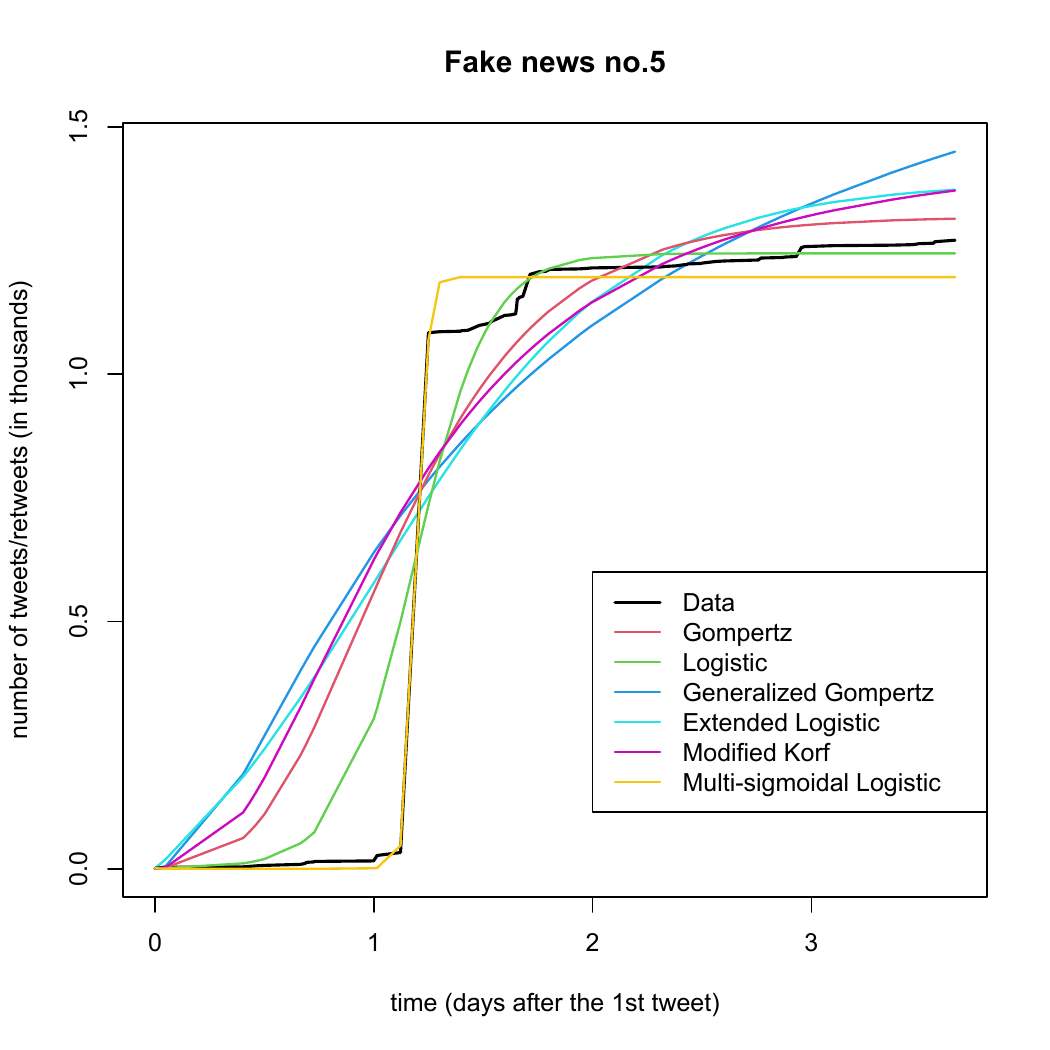}}
		\caption{The fake news time evolution and the corresponding fitting models $m_X(t;j)$.}
		\label{fig:Figure24}
	\end{figure}
	It is clear that, for all the considered datasets, the model that guarantees the best results is the multisigmoidal logistic one, as also confirmed by the quantitative results reported in Table \ref{tab:Tabella9}.
	However, using the multisigmoidal logistic model imply the estimation of a quite large number number of parameters (given by the degree of the polynomial + 2). Hence, one can be interested in fitting the data with a more simple model, i.e.\ one of the others studied in Section \ref{Sec:special-cases}. Considering this type of data approximation, the model which returns the best fit (i.e. the one with the smallest MSE or the smallest RAE) varies depending on the analyzed fake news. More specifically, for the dataset no.\ 1 the best model is the Gompertz one, for the dataset no.\ 2 it is the modified Korf, for the datasets no.\ 3-4 it is the generalized Gompertz, for the dataset no.\ 5 it is the logistic.
	\begin{table}[h]
		\caption{The mean square error (MSE) and the relative absolute error (RAE) between the data and the fitted models for dataset (DS) no.\ 1$\div$5. The minimum MSE and the minimum RAE for each dataset is underlined.}
		\label{tab:Tabella9}
		\tiny
		\centering
		\begin{tabular}{ll|llllll}
			& & case (i)						& case (ii)				& case(iii)						& case (iv) 					& case (v) 				& case (vi)\\
			& & Gompertz               & gen. Gompertz   & logistic               & ext. logistic      & multis. logistic &  mod. Korf          \\ \hline
			DS no.\ 1 &MSE & $2.71523$              & $3.53550$              & $2.86095$              & $2.99282$              & \underline{$0.48732$}               & $2.96641$              \\
			&RAE & $0.99484$              & $0.01460$              & $0.01192$              & $0.01190$              & \underline{$0.00467$}               & $0.01327$              \\ \hline
			DS no.\ 2 &MSE & $6.94419\cdot10^{-5}$ 	& $6.91156\cdot10^{-5}$ 		& $6.96783\cdot10^{-5}$ 	& $1.22240\cdot10^{-4}$	 & \underline{$4.49001\cdot10^{-5}$}  		& $6.83370\cdot10^{-5}$ \\
			&RAE & $0.00695$              & $0.00698$              & $0.00693$              & $0.00935$              & \underline{$0.00526$}               & $0.00703$              \\ \hline
			DS no.\ 3 &MSE & $0.30543$              & $0.21796$              & $0.31618$              & $0.31420$              & \underline{$0.06927$}               & $0.24032$              \\
			&RAE & $0.96458$              & $0.00663$              & $0.00798$              & $0.00797$              & \underline{$0.00422$}               & $0.00697$              \\ \hline
			DS no.\ 4 &MSE & $0.00958$              & $0.00733$              & $0.01274$              & $0.15590$              & \underline{$0.00501$}               & $0.00594$              \\
			&RAE & $0.05806$              & $0.04751$              & $0.06657$              & $0.76616$              & \underline{$0.03131$}               & $0.04303$              \\ \hline
			DS no.\ 5 &MSE & $0.02391$              & $0.04795$              & $0.00838$              & $0.03883$              & \underline{$0.00280$}               & $0.03449$    \\
			&RAE & $0.55334$              & $0.57466$              & $0.34989$              & $0.57569$              & \underline{$0.09370$}               & $0.58865$              \\   \hline       
		\end{tabular}
	\end{table}
	Finally, for any fake news we also calculate the corresponding mean $m_Y(t;j)$ considering different values for $\rho$. The plots are shown in Figure \ref{fig:Figure13nov23}. Note that in certain cases, it was not possible to proceed with the determination of the mean $m_Y(t;j)$ using the multi-sigmoidal logistic approximation which, we remember, is the best for any fake news. The reason is owing to numerical causes, in fact the multi-sigmoidal logistic curve involves the calculations of very small quantities which are mistakenly treated by the computer as $-\infty$. Therefore, in these cases, it is preferred to use as the approximating curve the one that determines the smallest MSE or the smallest RAE after the multi-sigmoidal logistics. It can be interesting, as a future development, studying a procedure to approximate also the coefficient of proportionality $\rho$.
	\par 
 As mentioned before, in the analysis conducted in this section both the MSE and the RAE decrease with increasing model complexity, expressed in terms of the number of involved parameters. Precisely, the high number of parameters produces models that tend to fit the data better. However, this characteristic can lead to models prone to overfitting, i.e. models that do not generalize well to new data. Therefore, measures of goodness that take into account the complexity of the model in addition to its ability to fit the data are often used in applied contexts. This is the case of the Akaike Information Criterion and the Bayesian Information Criterion  goodness-of-fit measures that introduce a penalty based on the complexity of the model. This line of inquiry based on such information criteria has been successfully pursued in \cite{DiCrescenzoetal2022a}.
 To use the aforementioned criteria, however, the expression of the likelihood function is needed. 
 Unfortunately, it is not available in the present case. Calculating the likelihood function for the considered model is quite hard, 
 and thus it is beyond the specific scope of this work. However, one can consider proposing a diffusive approximation of the current model, as performed 
 in other studies on similar topics. This can lead to a more comprehensive statistical analysis in a future research work. 
	\begin{figure}[t]
		\centering
		\subfigure[]{\includegraphics[scale=0.26]{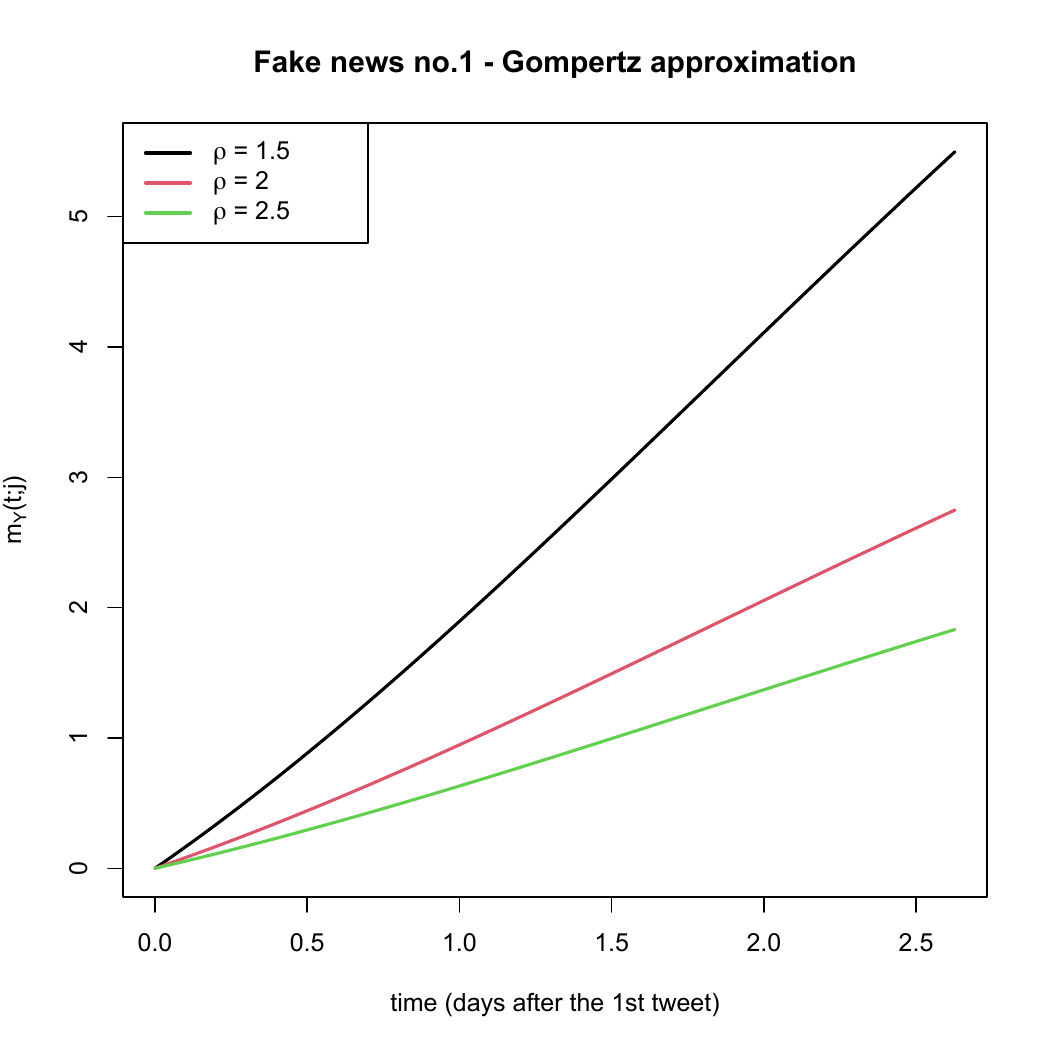}}\qquad
		\subfigure[]{\includegraphics[scale=0.26]{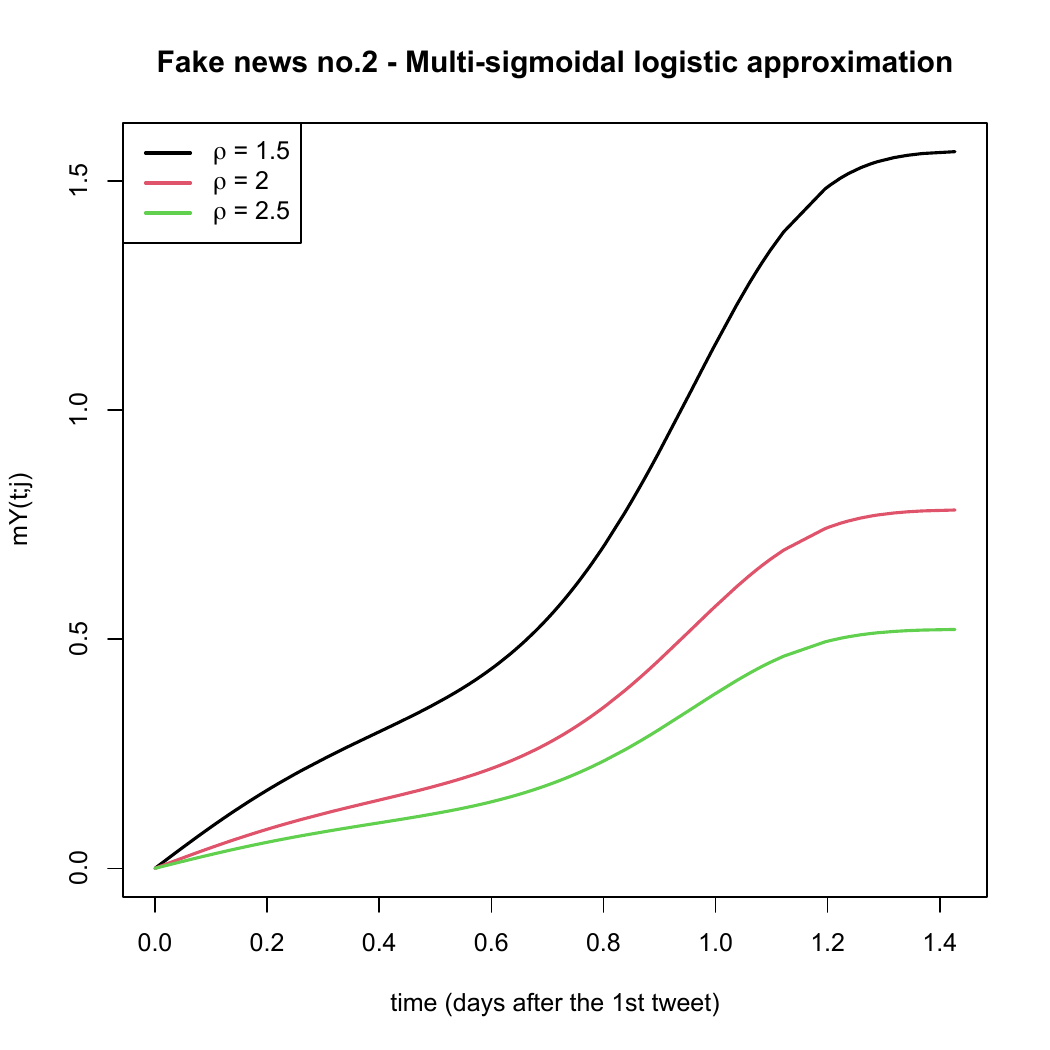}}\qquad
		\subfigure[]{\includegraphics[scale=0.26]{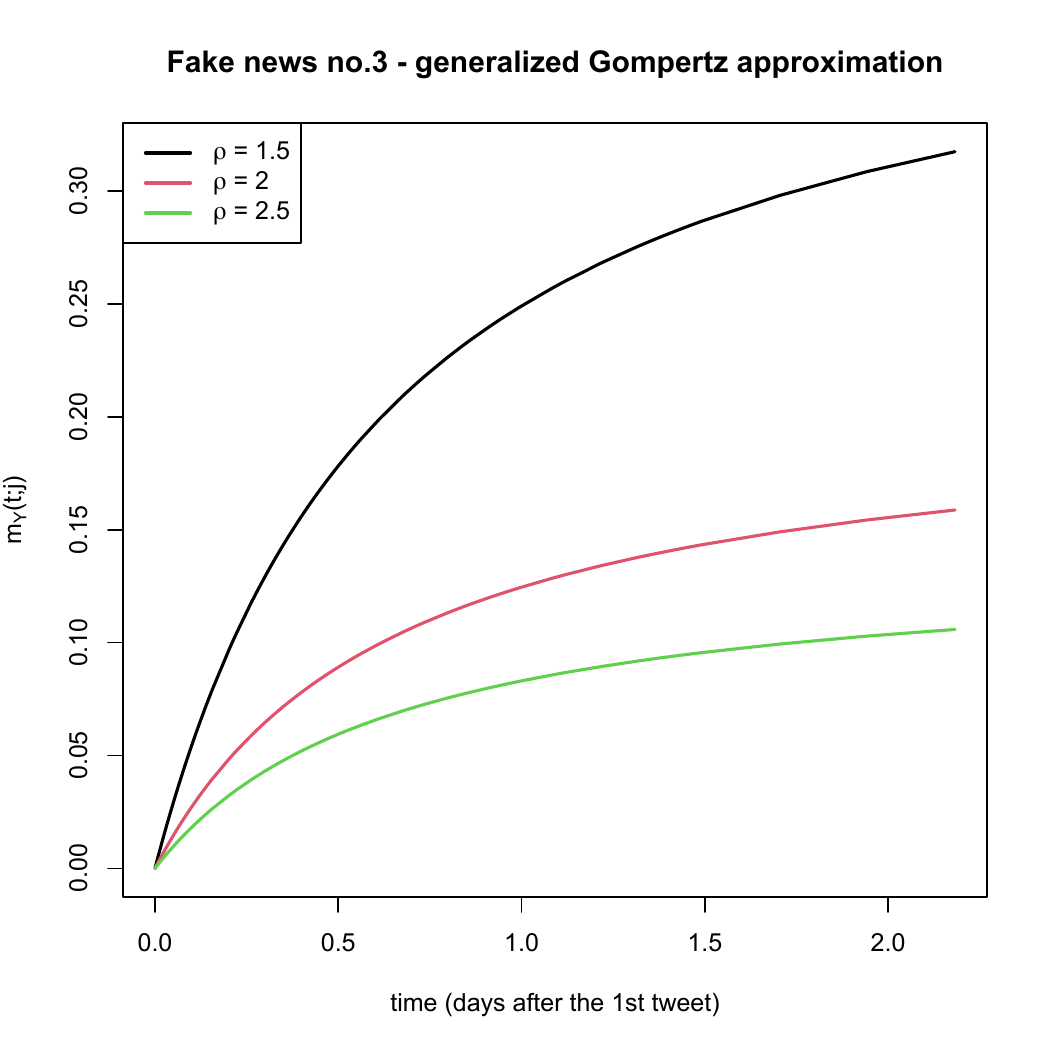}}\\
		\subfigure[]{\includegraphics[scale=0.26]{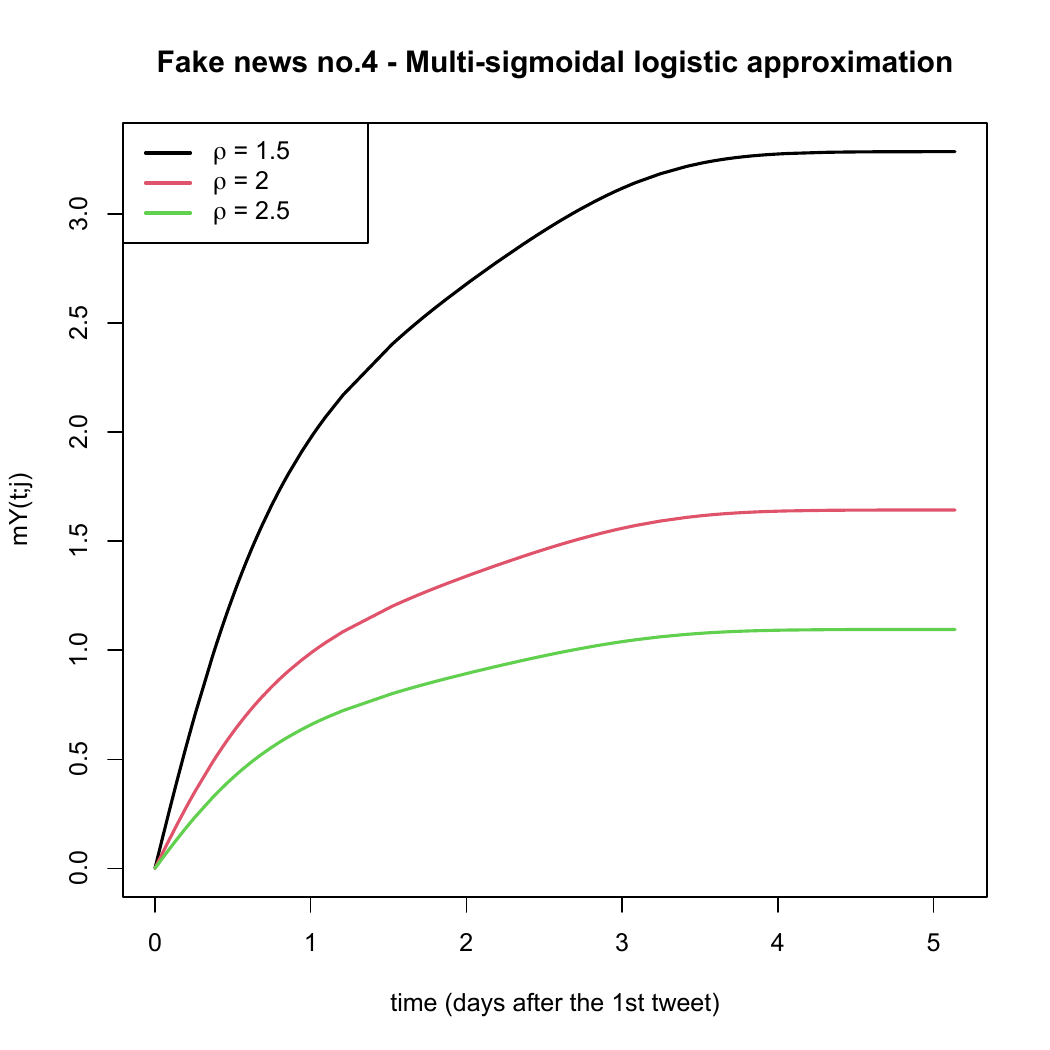}}\qquad
		\subfigure[]{\includegraphics[scale=0.26]{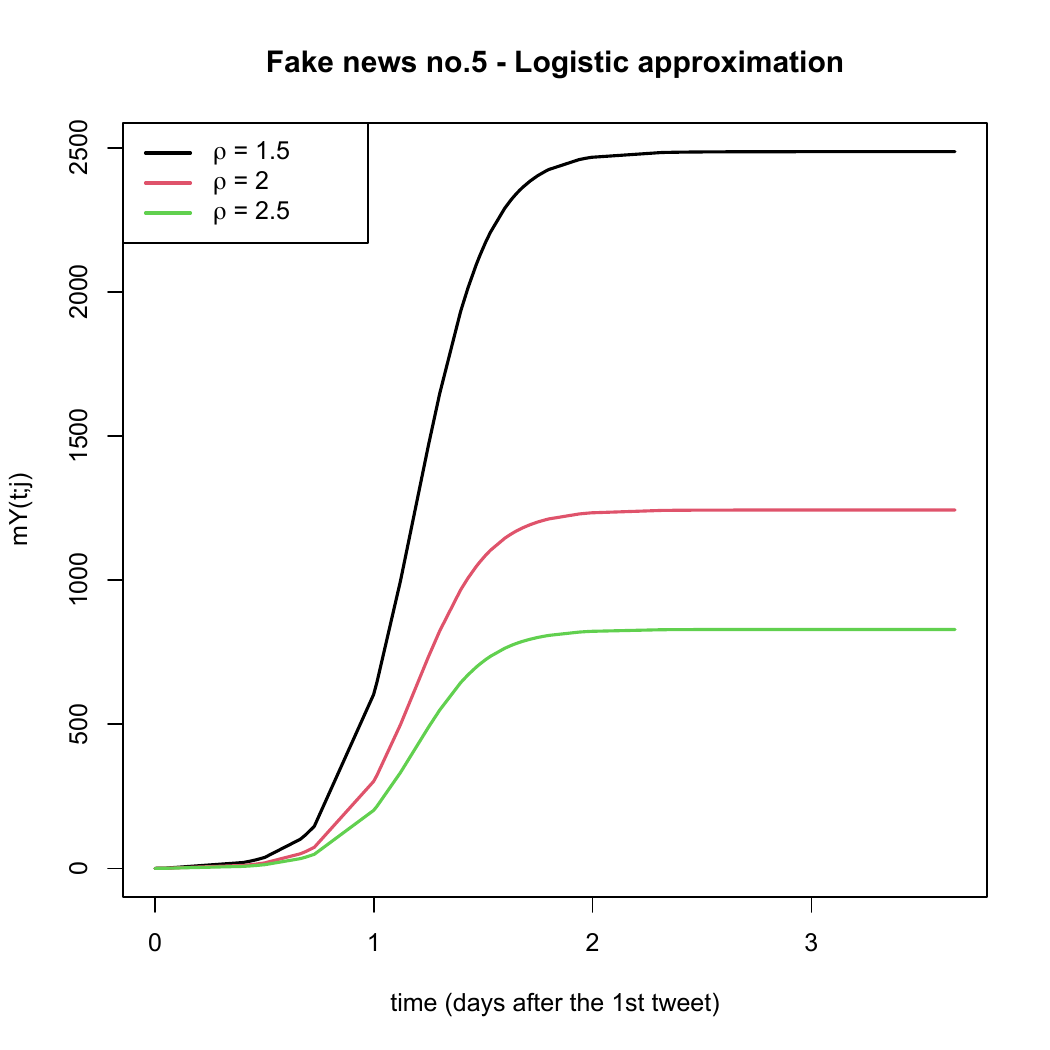}}
		\caption{The approximating $m_Y(t;j)$ for different values of $\rho$.}
		\label{fig:Figure13nov23}
	\end{figure}
	\section{Conclusions}
	 The paper is devoted to modeling the temporal propagation of fake news in online social networks. In particular, we introduced a probabilistic model represented by a two-dimensional birth-death process whose first component shows the number of individuals who knows the fake news and intends to spread it, while the second component shows the number of individuals who has forgotten the fake news or has lost the intention to spread it. The transitions which regulate the dynamics of the two-dimensional process are determined by temporal non-homogeneous individual birth and death rates and they are proportional to the size of the first component. {The process, introduced in this paper for the first time, was then analyzed through the study of the p.g.f.} The partial differential equation whose solution is represented by the p.g.f. is not easily solvable in the general case but has been solved in the case of temporal homogeneous rates and in the case of temporal non-homogeneous proportional rates. Starting from the p.g.f., the first and second order moments of both the first and second components, the mixed moments and the covariance of the two-dimensional process were determined. Furthermore, to study the correlation between the two components, a new index was studied, represented by the ratio between the mean of the product of the two components and the product of the means. {The definition of such adimensional index is one of the novelties introduced by the paper.} Considering that the first component of the process represents the number of individuals who knows the news and intends to spread it in a large population, we considered the conditions that make the mean of this first component equal to sigmoidal growth curves well known in the literature. Finally, we proposed an application to real data considering the data linked to the temporal diffusion of 5 fake news. In particular, by comparing the mean square error 
and the relative absolute error we determined the model that best fits the data of the case study. Such application makes the introduced two-dimensional process interesting from a modeling point of view. 
	In certain real situations relating to the spread of fake news, the information of interest can be obtained efficiently only via numerical methods  (see, for instance, D'Ambrosio \textit{et al.}\ \cite{Dambrosioetal2021}). {With the idea of providing closed-form results, which is one of the main  strength of the paper, the proposed stochastic process is a significantly simplified model.}
	\par
	In order to make the model more realistic for the analyzed datasets, it may be useful to consider cases where the mean of $X(t)$ coincides with other growth curves, for example the T-growth model (cf.\ Barrera \textit{et al.} \cite{Barreraetal2021}).
	Moreover, analyzing the time that the process takes to reach a critical size is interesting in certain application contexts; therefore, an analysis aimed at the first passage time (as done in Albano \textit{et al.} \cite{Albanoetal2023}) represents one of the possible future developments. 
	The model can be reformulated by giving a different interpretation to the components $X(t)$ and $Y(t)$. Thus, the stochastic process may be used to analyze the trend of other data sets, coming from different real contexts, such as the epidemiological one. 
	Furthermore, it will be of interest to divide the population into more disjoint compartments and analyze the resulting model both from a deterministic and a stochastic point of view. 
	\par
	We conclude by remarking that in order to stop the spread of the fake news it is necessary to make the number of inactives large and the number of spreaders tending to 0. In the present model, such behavior emerges when the intensity of the spreaders $\lambda$ is smaller than the intensity of the inactives $\mu$, i.e.\ when $\rho<1$ in the case of proportional intensities. 
	\par
 Moreover, we point out that the model characterized by intensities \eqref{trans-rates} can also be useful for describing the spread of news in general (not necessarily just fake news) within an infinite population. In this case, one might expect a non-sigmoidal growth in average, since the news could potentially spread throughout the entire population which is assumed to be large. Such generalization is out of the scope of the present paper but can be the object of a future research.  
		\section*{Acknowledgements}
	The authors are members of the research group GNCS of INdAM (Istituto Nazionale di Alta Matematica). 
	This work is partially  funded by the `European Union -- Next Generation EU' through 
	MUR-PRIN 2022, project 2022XZSAFN ``Anomalous Phenomena on Regular and Irregular Domains: Approximating Complexity for the Applied Sciences'', and MUR-PRIN 2022 PNRR, project P2022XSF5H ``Stochastic Models in Biomathematics and Applications''.
	
			\section*{Data availability statement}
	The data that support the findings of Section \ref{Section8} of this study are available in Zenodo at https://zenodo.org/records/5225338, reference number \cite{linkdati}.  

	\section*{Conflict of interest}
		The authors declare no potential conflict of interests.
		\section*{ORCID}
			A. Di Crescenzo: http//orcid.org/0000-0003-4751-7341\\
			P. Paraggio: https://orcid.org/0000-0002-3308-7937


\begin{thebibliography}{999}
		%
		\bibitem{SanMartin2020}
		San Mart\'in J, Drubi F, Rodr\'iguez P\'erez D. (2020) Uncritical polarized groups: The impact of spreading fake news as fact in social
	networks. Math. Comput. Simul., 178, 192--206. https://doi.org/10.1016/j.matcom.2020.06.013
	
		\bibitem{DiCrescenzoetal2023a}
		Di Crescenzo A, Paraggio P, Spina S. (2023) Stochastic growth models for the spreading of fake news.
		Mathematics 2023, 11, 3597. https://doi.org/10.3390/math11163597
		%
		\bibitem{GiornoSpina2016}
		Giorno V, Spina S. (2016) Rumor spreading models with random denials. Physica A: Statistical Mechanics and its Applications, {461}, 569--576. DOI: 10.1016/j.physa.2016.06.070
		%
		\bibitem{Mahmoud2020}
		Mahmoud H. (2020) A model for the spreading of fake news. Journal of Applied Probability, {57}(1), 332--342. https://doi.org/10.1017/jpr.2019.103
		%
		\bibitem{EsmaeeliSajadi2023}
		Esmaeeli N, Sajadi F. (2023) 
		On the probability of rumour survival among sceptics. Journal of Applied Probability, {60}(3), 1096--1111. https://doi.org/10.1017/jpr.2022.113
		%
		\bibitem{MartinsPinto2020}
		Martins J, Pinto A. (2020)
		The value of information searching against fake news. 
		Entropy, 22, 1368. https://doi.org/10.3390/e22121368
		%
		\bibitem{GiornoNobile2023}
		Giorno V, Nobile A. (2023)  Time-inhomogeneous finite birth processes with applications in epidemic models. Mathematics {11}(21), 4521. https://doi.org/10.3390/math11214521
		%
		\bibitem{DiCrescenzoMartinucci2008}
		Di Crescenzo A, Martinucci B. (2008) A first-passage-time problem for symmetric and similar two-dimensional birth--death processes. 
		Stochastic Models, 24:3, 451--469. DOI: 10.1080/15326340802232293
		%
		\bibitem{GomezLopez2012}
		G\'omez-Corral A, L\'opez Garc\' ia M. (2012)
		On the number of births and deaths during an extinction cycle, and the survival of a certain individual in a competition process. 
		Comput. Math. Appl. 64, 236--259. http://dx.doi.org/10.1016/j.camwa.2012.01.058
		%
		\bibitem{GomezLopez2015}
		G\'omez-Corral A, L\'opez Garc\' ia M. (2015)
		Lifetime and reproduction of a marked individual in a two-species competition process.
		Appl. Math. Comput. 264, 223--245. https://doi.org/10.1016/j.amc.2015.04.061
		%
		\bibitem{GriffithsGreenhalsh2011}
		Griffiths M, Greenhalgh D. (2011)
		The probability of extinction in a bovine respiratory syncytial virus epidemic model.
		Math. Biosci. 231, 144--158. https://doi.org/10.1016/j.mbs.2011.02.011
		%
		\bibitem{Billard1981}
		Billard L. (1981)
		Generalized two-dimensional bounded birth and death processes and some applications. 
		J. Appl. Prob., 18, 335--347. https://doi.org/10.2307/3213281
		%
		\bibitem{Adbullahietal2019}
		Abdullahi A, Shohaimi S, Kilicman A,  Ibrahim MH (2019) 
		Stochastic models in seed dispersals: random walks and birth--death processes.
		J. Biol. Dynam., 13(1), 345--361. DOI: 10.1080/17513758.2019.1605003
		%
		\bibitem{KrishnaKumar2023}
		Krishna Kumar B, Sankar  R, Navaneetha Krishnan R,  Rukmani R. (2023)
		Multiserver call center retrial queue under Bernoulli vacation schedule with two-way communication and orbital search.
		Telecomm. Systems, 84, 23--51. https://doi.org/10.1007/s11235-023-01025-1
		%
		\bibitem{LiangLuh2015}
		Liang CC, Luh H. (2015)
		Solving two-dimensional Markov chain model for call centers.
		Ind. Manag.  Data Syst., 115 (5), 901--922. http://dx.doi.org/10.1108/IMDS-12-2014-0363
		%
		\bibitem{Mobilia2013}
		Mobilia M. (2013)
		Commitment versus persuasion in the three-party constrained voter model.
		J. Stat. Phys., 151, 69--91. DOI: 10.1007/s10955-012-0656-x
		%
		\bibitem{Kapsikaretal2021}
		Kapsikar S, Saha I, Agarwal K, Kavitha V, Zhu Q. (2021)
		Controlling fake news by collective tagging: a branching process analysis. 
		IEEE Control Systems Lett., 5(6), 2108--2113. Article 9296289. https://doi.org/10.1109/LCSYS.2020.3045299
		%
		\bibitem{DomenechBenllochetsl2008}
		Domenech-Benlloch MJ,  Gimenez-Guzman  JM, Pla V, Martinez-Bauset J, Casares-Giner V. (2008)
		Generalized truncated methods for an efficient solution of retrial systems.
		Math. Problems Engin., 2008, Article ID 183089. https://doi.org/10.1155/2008/183089
		%
		\bibitem{Servi2002}
		Servi LD. (2002)
		Algorithmic solutions to two-dimensional birth-death processes with application to capacity planning.
		Telecomm. Systems 21(2--4), 205--212. https://doi.org/10.1023/A:1020942430425
		%
		\bibitem{Brandwajn}
		Brandwajn A. (1979) 
		An iterative solution of two-dimensional birth and death processes. 
		Oper. Res., 27, 595--605. https://doi.org/10.1287/opre.27.3.595
		%
		\bibitem{Fayolleetal}
		Fayolle G, King PJB, Mitrani I. (1982) 
		The solution of certain two-dimensional Markov models.
		Adv. Appl. Prob., 14, 295--308. http://dx.doi.org/10.1145/1009375.806175
		%
		\bibitem{Remiche}
		Remiche MA. (1998)
		Time to congestion in homogeneous quasi-birth-and-death processes. 
		Opsearch, 35, 169--192. DOI: 10.1145/1009375.806175
		%
		\bibitem{MollerBerthelsen}
		M\o ller J, Berhelsen KK. (2012)
		Transforming spatial point processes into Poisson processes using random superposition.
		Adv. Appl. Prob. (SGSA) 44, 42--62. http://dx.doi.org/10.1239/aap/1331216644
		%
		\bibitem{Allen}
 Allen, L. (2003)
		An Introduction to Stochastic Processes with Applications to Biology.
		Pearson/Prentice Hall. 
		%
		\bibitem{Ricciardi}
		Ricciardi LM. (1986) Stochastic Population Theory: Birth and Death Processes. In: Hallam, T.G., Levin, S.A. (eds) Mathematical Ecology. Biomathematics, vol 17. Springer, Berlin, Heidelberg. https://doi.org/10.1007/978-3-642-69888-08
		%
		\bibitem{Evans} 
		Evans LC. (2010) Partial Differential Equations. 2nd Edition, Department of Mathematics, University of California, Berkeley, American Mathematical Society. 
		http://dx.doi.org/10.1090/gsm/019
		%
		\bibitem{Artalejo}
		 Artalejo JR, G\'omez-Corral A, L\'opez-Garc\'{\i}a M, Molina-Par\'{\i}s C. (2017)
		Stochastic descriptors to study the fate and potential of naive T cell clonotypes in the periphery.
		J. Math. Biol. 74, 673--708. https://doi.org/10.1007/s00285-016-1020-6 
		%
		\bibitem{Albanoetal}
		Albano G, Giorno V, Rom\`{a}n-Rom\`{a}n P, Torres-Ruiz F. (2022)
		Study of a general growth model.
		Communications in Nonlinear Science and Numerical Simulation
		Volume 107, 2022, 106100. https://doi.org/10.1016/j.cnsns.2021.106100
		%
		\bibitem{Romaneral2019}
		Rom\'{a}n-Rom\'{a}n  P, Serrano-P\'{e}rez JJ, Torres-Ruiz F.  (2019) 
		A note on estimation of multi-sigmoidal Gompertz functions with random noise. 
		Mathematics 7, no.\ 6: 541. https://doi.org/10.3390/math7060541
		%
		\bibitem{Romanetal2021}
		Rom\'{a}n-Rom\'{a}n  P, Rom\'{a}n-Rom\'{a}n S, Serrano-P\'{e}rez JJ, Torres-Ruiz F (2021)
		Using first-passage times to analyze tumor growth delay. 
		Mathematics, 9, 642. https://doi.org/10.3390/math9060642
		%
		\bibitem{DiCrescenzoetal2023}
		Asadi M, Di Crescenzo A, Sajadi FA, Spina S. (2023) A generalized Gompertz growth model with applications and related birth-death processes. Ricerche Mat. 72, 1--36. https://doi.org/10.1007/s11587-020-00548-y
		%
		\bibitem{DiCrescenzoParaggio2019}
		Di Crescenzo A, Paraggio P. (2019) 
		Logistic growth described by birth-death and diffusion processes. Mathematics, 7, 489. https://doi.org/10.3390/math7060489
		%
		\bibitem{DiCrescenzoetal2021}
		Di Crescenzo A, Paraggio P, Rom\'{a}n-Rom\'{a}n P, Torres-Ruiz F. (2021)
		Applications of the multi-sigmoidal deterministic and stochastic logistic models for plant dynamics.
		Appl. Math. Model., {92}, {884--904}. {https://doi.org/10.1016/j.apm.2020.11.046}
		%
		\bibitem{DiCrescenzoetal2022a}
		Di Crescenzo A, Paraggio P, Rom\'{a}n-Rom\'{a}n P, Torres-Ruiz F. (2023)
		Statistical analysis and first-passage-time applications of a lognormal diffusion process with multi-sigmoidal logistic mean. 
		Stat Papers 64, 1391--1438. https://doi.org/10.1007/s00362-022-01349-1
		%
		\bibitem{DiCrescenzoSpina2016}
		Di Crescenzo A, Spina S. (2016) Analysis of a growth model inspired by Gompertz and Korf laws, and an analogous birth-death process.
		Math. Biosci., 282,  121--134. https://doi.org/10.1016/j.mbs.2016.10.005
		%
		\bibitem{Wareetal1982}
		Ware GO, Ohki K, Moon LC. (1982)
		The Mitscherlich plant growth model for determining critical nutrient deficiency levels.
		Agronomy Journal, 88--91. https://doi.org/10.2134/agronj1982.00021962007400010024x
		%
		\bibitem{Dambrosioetal2021}
		D'Ambrosio R, Giordano G, Mottola S, Paternoster B. (2021) 
		Stiffness analysis to predict the spread out of fake information. 
		Future Internet, 13, 222. https://doi.org/10.3390/fi13090222
		%
		\bibitem{Barreraetal2021}
		Barrera A, Rom\'{a}n-Rom\'{a}n P, Torres-Ruiz F. (2021)
		T-growth stochastic model: Simulation and inference via metaheuristic algorithms.
		Mathematics, 9(9), 959.
		https://doi.org/10.3390/math9090959
		%
		\bibitem{Albanoetal2023}
		Albano G, Barrera A, Giorno V, Rom\'{a}n-Rom\'{a}n P, Torres-Ruiz F. (2023)
		First passage and first exit times for diffusion processes related to a general growth curve
		Communications in Nonlinear Science and Numerical Simulation, 2023, 126, 107494.
		https://doi.org/10.1016/j.cnsns.2023.107494
		%
		\bibitem{linkdati} 
		Bodaghi A. (2020) A study on real graphs of fake news spreading on Twitter. DOI: 10.5281/zenodo.5225338
		\url{https://zenodo.org/records/5225338} (accessed 14/11/2023)
	\end{thebibliography}
\end{document}